%

\documentclass[bj,preprint]{imsart}

\RequirePackage{amsthm,amsmath,amsfonts,amssymb}
\RequirePackage[numbers]{natbib}

\usepackage{enumerate}

\usepackage{todonotes}
\presetkeys%
{todonotes}%
{inline}{}

\newcommand{\E}{\mathbb{E}}

\newcommand{\Po}{\mathbb{P}}

\newcommand{\J}{\mathcal{J}}

\newcommand{\pa}[1]{\left(#1\right)}
\newcommand{\cro}[1]{\left[#1\right]}
\newcommand{\ac}[1]{\left\{#1\right\}}
\newcommand{\ab}[1]{\left| #1 \right|}    

\newcommand{\Var}{\mbox{Var}}

\newcommand{\un} {\mbox{\rm 1\hspace{-0.3em}I}}
\newcommand{\PJ}{\Pi_{S_L}}

\startlocaldefs
\theoremstyle{plain}

\newtheorem{theorem}{\textbf{Theorem}}[section]

\newtheorem{remark}[theorem]{\textbf{Remark}}


\newtheorem{lemma}[theorem]{\textbf{Lemma}}

\newtheorem{corollary}[theorem]{\textbf{Corollary}}


\newcommand{\co}[1]{#1}


\endlocaldefs

\begin{document}
	
	\begin{frontmatter}
		\title{Minimax optimal goodness-of-fit testing for densities and multinomials under a local differential privacy constraint}
		\runtitle{Optimal private testing}
		
		\begin{aug}
			\author[A]{\fnms{Joseph} \snm{Lam-Weil}\ead[label=e1]{joseph.lam@ovgu.de}},
			\author[B]{\fnms{B\'{e}atrice} \snm{Laurent}\ead[label=e2]{laurentb@insa-toulouse.fr}}
			\and
			\author[C]{\fnms{Jean-Michel} \snm{Loubes}\ead[label=e3]{loubes@math.univ-toulouse.fr}}
			
			\runauthor{J. Lam-Weil et al.}
			
			\address[A]{Magdeburg University, Germany, \printead{e1}}
			
			\address[B]{INSA de Toulouse, France, \printead{e2}}
			
			\address[C]{Universit\'{e} de Toulouse, France, \printead{e3}}
		\end{aug}
		
		\begin{abstract}
			Finding anonymization mechanisms to protect personal data is at the heart of recent machine learning research. Here, we consider the consequences of local differential privacy constraints on  goodness-of-fit testing, i.e. the statistical problem assessing whether sample points are generated from a fixed density $f_0$, or not. The observations are kept hidden and replaced by a stochastic transformation satisfying the local differential privacy constraint. In this setting, we propose a testing procedure which is based on an estimation of the quadratic distance between the density $f$ of the unobserved samples and $f_0$. \co{We establish an upper bound on the separation distance associated with this test, and a matching lower bound on the minimax separation rates of testing under non-interactive privacy in the case that $f_0$ is uniform, in discrete and continuous settings.} To the best of our knowledge, we provide the first minimax optimal test and associated private transformation under a local differential privacy constraint over Besov balls in the continuous setting, quantifying the price to pay for data privacy. We also present a test that is adaptive to the smoothness parameter of the unknown density and remains minimax optimal up to a logarithmic factor. Finally, we note that our results can be translated to the discrete case, where the treatment of probability vectors is shown to be equivalent to that of piecewise constant densities in our setting. That is why we work with a unified setting for both the continuous and the discrete cases.
		\end{abstract}
		
		\begin{keyword}
			\kwd{local differential privacy}
			\kwd{non-interactive privacy}
			\kwd{goodness-of-fit testing}
			\kwd{minimax separation rates}
			\kwd{continuous and discrete distributions}
		\end{keyword}
		
	\end{frontmatter}
	

	\section{Introduction}
	Ensuring user privacy is at the core of the development of Artificial Intelligence. Indeed datasets can contain extremely sensitive information, and someone with access to a privatized training set or the outcome of an algorithm should not be able to retrieve the original dataset. However, classical anonymization and cryptographic approaches fail to prevent the disclosure of sensitive information in the context of learning. Indeed, with the example of a hospital's database, removing names and social security numbers from databases does not prevent the identification of patients using a combination of other attributes like gender, age or illnesses. \cite{dinur2003revealing} cites Cystic Fibrosis as an example which exist with a frequency of around 1/3000. Hence differential privacy mechanisms were developed to cope with such issues.	Such considerations can be traced back to \cite{warner1965randomized,duncan1986disclosure,duncan1989risk,fienberg1998disclosure}. As early as in 1965, \cite{warner1965randomized} presented the first privacy mechanism which is now a baseline method for binary data: Randomized response. Another important result is presented in the works of \cite{duncan1986disclosure,duncan1989risk,fienberg1998disclosure}, where they expose a trade-off between statistical utility, or in other terms performance, and privacy in a limited-disclosure setting.
	
	Differential privacy as expressed in \cite{dwork2006calibrating,dwork2006our} is the most common formalization of the problem of privacy. It can be summed up as the following condition: altering a single data point of the training set only affects the probability of an outcome to a limited degree. One main advantage of such a definition of privacy is that it can be parametrized by some positive parameter $\alpha$, where $\alpha$ close to 0 corresponds to a more restrictive privacy condition. This definition treats privacy in a global way with respect to the original dataset, in contrast with the privacy constraint that follows.
	
	We now consider a stronger privacy condition called local differential privacy which also depends on a positive parameter $\alpha$ and where the analyst himself is not trusted with the data.
	Consider unobserved random variables $X_1,\ldots,X_n$ taking values in $[0,1]$, which are independent and identically distributed (i.i.d.) with density $f $ with respect to the Lebesgue measure.
	We observe $Z_1,\ldots,Z_n$ which are $\alpha$-local differentially private views of $X_1,\ldots,X_n$. That is, there exist probability measures $Q_1,\ldots,Q_n$ such that for all $0\leq i\leq n$, $Z_i$ is a stochastic transformation of $X_i$ by the channel $Q_i$ and
	\begin{equation}
		\label{eq:localPrivacy}
		\sup_{S \in \mathcal Z_i, z_j \in \tilde \Omega_j, (x, x') \in [0,1]^2} \frac{Q_i(Z_i \in S|X_i=x, Z_j = z_j, j < i)}{Q_i(Z_i \in S|X_i=x', Z_j = z_j, j<i)} \leq e^\alpha,
	\end{equation}
	where $Q_i(Z_i \in \tilde \Omega_i) = 1$ and $\mathcal Z_i$ is a $\sigma$-algebra such that $\tilde \Omega_i$ is its associated sample space.	
	This notion has been extensively studied through the concept of local algorithms, especially in the context of privacy-preserving data mining \cite{warner1965randomized, agrawal2000privacy,agrawal2001design,van2002randomized, evfimievski2003limiting, agrawal2005framework, mishra2006privacy, jank2008statistical, kasiviswanathan2011can}. Now note that Equation~\eqref{eq:localPrivacy} accounts for possible dependencies between $Z_i$'s, corresponding to the interactive case. The role of interactivity has been further studied in \cite{joseph2019role,butucea2020interactive,berrett2020locally}, and it can be complete or sequential. Recent results detailed in \cite{duchi2013locallower, duchi2013localproba, duchi2013localcomplete} give information processing inequalities depending on the local privacy constraint via the parameter $\alpha$. Those can be used to obtain Fano or Le Cam-type inequalities in order to obtain a minimax lower bound for estimation or testing problems. Our proof also relies on Le Cam's inequality, albeit in a more refined way in order to obtain minimax optimal results.

	Testing problems have appeared as crucial tools in machine learning in order to assess whether a model fits the observations or to detect anomalies and novelties.  In particular,
	goodness-of-fit testing is a classical hypothesis testing problem in statistics. It consists in testing whether the density $f$ of $n$ independent and identically distributed (i.i.d.)~observations  equals  a specified density $f_0$ or not. This motivates our study of goodness-of-fit testing under a local differential privacy constraint.
	
	We want to design our tests so that they reject the null hypothesis $H_0: f= f_0$ if the data is not actually generated from the given model with a given confidence level. 	 Assuming that $ f $ and $f_0$ belong to $ \mathbb{L}_2([0,1])= \ac{f : [0,1]\rightarrow \mathbb{R}, \|f\|_2^2= \int_0^1 f^2(x) dx < \infty}$, it is natural to propose a test based on an estimation of the squared $\mathbb{L}_2$-distance $ \|f-f_0\|_2^2$  between $f$ and $f_0$. In order to test whether $f=f_0$ from the observation of an i.i.d sample set $(X_1, \ldots, X_n)$ with common density $f$, \cite{Neyman1937} introduces an orthonormal basis $\{f_0, \phi_l, l \geq 0 \} $ of $ \mathbb{L}_2([0,1])$. The goodness-of-fit hypothesis is rejected if the estimator $ \sum_{l=1}^D (\sum_{i=1}^n \phi_l(X_i)/n)^2$ exceeds some threshold, where $D$ is a given integer depending on $n$. Data-driven versions of this test, where the parameter $D$ is chosen to minimize some penalized criterion have been introduced by  \cite{BR1992,Ledwina1994,KalLedwina1995,IngLedwina1996}. 
	
	Additionally, we want to find  the limitations of a test by determining how close the two hypotheses can get while remaining separated by the testing procedure.
	This classical problem has been studied under the lens of minimax optimality in the seminal work by \cite{ingster1987minimax,ingster1993}. Non-asymptotic performances and an extension to composite null hypotheses are provided in \cite{FL06}. 
	In order to introduce the notion of minimax optimality for a testing procedure, let us recall some definitions.
	We consider the uniform separation rate as defined in \cite{Baraud2002}. Let $ \Delta_{\gamma}$ be a $\gamma$-level test  with values in $\{0,1\}$, where $\Delta_{\gamma}=1$ corresponds to the decision of rejecting the null hypothesis $ f=f_0$ and $\Po_{f_0}(\Delta_\gamma = 1) \leq \gamma$. The uniform separation rate $\tilde \rho_n \left( \Delta_{\gamma} , \mathcal{C} , \beta, f_0 \right)$ of the test $ \Delta_{\gamma}$ with respect to the $\mathbb{L}_2$-norm, over a class $\mathcal{C}$ of alternatives $f$ such that $f- f_0$ satisfies smoothness assumptions, is defined for all $\beta$ in $(0,1)$ as
	\begin{align}\label{seprate}
		&\tilde \rho_n \left(  \Delta_{\gamma}, \mathcal C, \beta, f_0 \right) = \inf \Big\{ \rho > 0;  \sup_{f \in \mathcal C, \left\| f-f_0 \right\|_2 > \rho } \mathbb{P}_{f} \left(  \Delta_{\gamma}(X_1, \ldots, X_n) = 0 \right) \leq \beta  \Big\}, 
	\end{align} 
	where $ \mathbb{P}_{f} $ denotes the distribution of the i.i.d.~samples $(X_1, \ldots, X_n)$ with common density $f$.
	
	The uniform separation rate is then the smallest value in the sense of the $\mathbb{L}_2$-norm of $(f - f_0)$  for which the second kind error of the test is uniformly controlled by  $\beta$ over $  \mathcal C$. This definition extends the notion of critical radius introduced in  \cite{ingster1993} to the non-asymptotic framework.  Note that minimax separation rates are at least as fast as minimax estimation rates and the interest lies in determining problems where testing can be done faster than estimating.
	
	A test with level $\gamma$ having optimal performances should then have the smallest possible uniform separation rate (up to a multiplicative constant) over $\mathcal C$. 
	To quantify this,  \cite{Baraud2002} introduces the non-asymptotic minimax rate of testing defined by 
	\begin{equation}\label{minimaxrate}
		\tilde \rho_n^*\left( \mathcal{C}, \gamma, \beta, f_0 \right) = \inf_{\Delta_{\gamma}} \tilde \rho_n \left(  \Delta_{\gamma}, \mathcal{C}, \beta, f_0 \right),
	\end{equation}
	where the infimum is taken over all tests of level  $\gamma$. 
	A test is  optimal in the minimax sense over the class $\mathcal{C}$ if its uniform separation rate  is upper-bounded, up to some constant, by the non-asymptotic minimax rate of testing. Taking $\mathcal{C}$ too general leads to trivial rates. That is the reason why we restrict our study to two cases. On the one hand, we consider multinomial distributions which cover the discrete case. On the other hand, we work in the continuous case with Besov balls, which have been widely used in statistics since the seminal paper by \cite{donoho1996density}. Non-private results already exist for such sets, which make them meaningful for comparisons. Another motivation is that Besov sets are function classes parametrized by smoothness parameters and the minimax rates depend exclusively on those parameters in a lot of problems. Finally, thanks to their interesting properties from approximation theory, a large variety of signals can be dealt with, especially those built using wavelet bases.
	
	We present a few non-private results from the literature. For H\"older classes with smoothness parameter $s>0$, \cite{ingster1993} establishes the asymptotic minimax rate of testing $ n^{-2s/(4s+1)}$.
	The test proposed in their paper is not adaptive since it makes use of a known smoothness parameter $s$. Minimax optimal adaptive goodness-of-fit tests over  H\"older or Besov classes of alternatives  are provided in \cite{ingster2000} and \cite{FL06}. These tests achieve the separation rate $  (n/\sqrt{\log\log(n)})^{-2s/(4s+1)}  $ over a wide range of regularity classes (H\"older or Besov balls) with smoothness parameter $s>0$. The $ \log\log(n) $ term is the optimal price to pay for adaptation to the unknown parameter $s>0$. 
	
	In the discrete case, the goal is to distinguish between $d$-dimensional probability vectors $p$ and $p^0$ using samples from the multinomial distribution with parameters $p$ and $n$. \cite{paninski2008coincidence} obtain that the minimax optimal rate with respect to the $l_1$-distance, $\sum_{i = 1}^d |p_i-p^0_{i}|$, is $d^{1/4}/\sqrt n$. An extension is the study of local minimax rates as in \cite{valiant2017automatic}, where the rate is made minimax optimal for any $p^0$ instead of just in the worst choice of $p^0$. Finally, \cite{balakrishnan2017hypothesis} presents local minimax rates of testing both in the discrete and continuous cases.

	%
	%
	
	A few problems have already been tackled in order to obtain minimax rates under local privacy constraint. The main question is whether the minimax rates are affected by the local privacy constraint and to quantify the degradation of the rate in that case. We define a sample degradation of $C(\alpha)$ in the following way. If $n$ is the necessary and sufficient sample size in order to solve the classical non-private version of a problem, the $\alpha$-local differential private problem is solved with $nC(\alpha)$ samples. For a few problems, a degradation of the effective sample size by a multiplicative constant is found. In \cite{duchi2013localproba}, they obtain minimax estimation rates for multinomial distributions in dimension $d$ and find a sample degradation of $\alpha^2/d$. In \cite{duchi2018minimax}, they also find a multiplicative sample degradation of $\alpha^2 / d$ for generalized linear models, and $\alpha^2$ for median estimation. However, in other problems, a polynomial degradation is noted. For one-dimensional mean estimation, the usual minimax rate is $n^{-(1 \wedge (2-2/k))}$, whereas the private rate from \cite{duchi2018minimax} is $(n\alpha^2)^{-(0 \wedge (1-1/k))}$ for original observations $X$ satisfying $\E(X) \in [-1,1]$ and $\E(|X|^k) < \infty$. As for the problem of nonparametric density estimation presented in \cite{duchi2018minimax}, the rate goes from $n^{-2s/(2s+1)}$ to $(n \alpha^2)^{-2s/(2s+2)}$ over an elliptical Sobolev space with smoothness $s$. This result was extended in \cite{butucea2019local} over Besov ellipsoids. The classical minimax mean squared errors were presented in \cite{yu1997assouad,yang1999information,tsybakov2004introduction}. 
	
	Goodness-of-fit testing has been studied extensively under a global differential privacy constraint in 
	\cite{gaboardi2016differentially}, \cite{cai2017priv}, \cite{aliakbarpour2018differentially}, \cite{acharya2018differentially} and \cite{canonne2019private}. Further steps into covering other testing problems under global differential privacy have been taken already with works like \cite{aliakbarpour2019private}. 
	
	Our contributions can be summarized in the following way. Under non-interactive local differential privacy, we provide optimal separation rates for goodness-of-fit testing over Besov balls in the continuous case. To the best of our knowledge, we are the first to provide quantitative guarantees in such a continuous setting. We also provide minimax separation rates for multinomial distributions. In particular, we establish a lower bound \co{when $f_0$ is uniform}, that is completely novel in the definition of the prior distributions leading to optimal rates, and in the way we tackle \co{non-interactive} privacy. Indeed, naive applications of previous information processing inequalities under local privacy lead to suboptimal lower bounds. Finally, we provide an adaptive version of our test, which \co{does not rely on the knowledge of the smoothness parameter $s$ and is} rate-optimal up to a logarithmic factor. So in shorter terms:
	\begin{itemize}
		\item We provide the first minimax lower bound for the problem of goodness-of-fit testing under local privacy constraint over Besov balls. \co{We focus on non-interactive privacy, and $f_0$ being uniform, which directly translates to a lower bound when $f_0$ is nearly uniform}.
		\item We present the first minimax optimal test with the associated \co{non-interactive} local differentially private channel in this continuous setting. \co{The upper bound obtained for this test will hold for any density $f_0 \in \mathbb L_2([0,1])$.}
		\item The test is made adaptive to the smoothness parameter of the unknown density up to a logarithmic term.
		\item A minimax optimal test under \co{non-interactive} privacy can be derived for multinomial distributions as well.
	\end{itemize}
	We start with citing results pertaining to the study of goodness-of-fit testing in the discrete case under local differential privacy. \cite{gaboardi2017local} takes another point of view from ours and provide asymptotic distibutions for a chi-squared statistic applied to noisy observations satisfying the local differential privacy condition. \cite{sheffet2018locally} takes a closer approach to ours and determines a sufficient number of samples for testing between $p=p^0$ and fixed $\sum |p_i-p^0_{i}|$, which has been improved upon by \cite{acharya2018test}. Finally, in parallel with the writing of the present paper, \cite{berrett2020locally} have provided minimax optimal rates of testing for discrete distributions under local privacy, in both $l_1$ and $l_2$ norms. In particular, they tackle both interactive and non-interactive privacy channels and point out a discrepancy in the rates between both cases.
	
	
	Now, the following papers tackle the continuous case. \cite{butucea2019local} provides minimax optimal rates for density estimation over Besov ellipsoids under local differential privacy. Following this paper, we apply Laplace noise to the projection of the observations onto a wavelet basis, although we tackle the different problem of density testing. The difference between density estimation and testing is fundamental and leads in our case to faster rates. A problem closer to density testing is the estimation of the quadratic functional presented in \cite{butucea2020interactive}, where the authors find minimax rates over Besov ellipsoids under local differential privacy. They rely on the proof of the lower bound in the non-interactive case given in a preliminary version of our paper -- see \cite{lam2020minimax}. It was refined in order to improve on the rate in $\alpha$, reaching an optimal rate for low values of $\alpha$. \co{So combining the lower bound we obtain under non-interactive privacy with their study of interactive local privacy, it is possible to note that there is an intrinsic gap in effectiveness that non-interactive privacy cannot hope to close for estimation of the squared functional.}
	
	Finally, the present paper is an iteration over \cite{lam2020minimax}, which only focused on the continuous case. We extend its scope and construct a unified setting to tackle both Besov classes and multinomial distributions, leading to minimax optimal results in both settings.

	The rest of the paper is articulated as follows. In Section~\ref{sec:setting}, we detail our setting and sum up our results. A lower bound on the minimax separation distance for goodness-of-fit testing is introduced in Section~\ref{sec:lowerBound}. Then we introduce a test and a privacy mechanism in Section~\ref{sec:upperBound}. This leads to an upper bound which matches the lower bound. However, in the continuous case, the proposed test depends on a smoothness parameter which is unknown in general. That is the reason why we present a version of the test in Section~\ref{adaptation} that is adaptive to $s$. Afterwards, we conclude the paper with a final discussion in Section~\ref{sec:discussion}. Finally, in the Appendix, the proofs of all the results presented in this paper are contained in Section~\ref{sec:proofs}.
	
	All along the paper, $C$ will denote some absolute constant, $c(a,b, \ldots), C(a,b, \ldots) $ will be constants depending only on their arguments. The constants may vary from line to line.

	\section{Setting}
	\label{sec:setting}
	
	\subsection{Local differential privacy}
	
	Let $n$ be some positive integer and $\alpha > 0$. Let $f, f_0$ be densities in $\mathbb L_2([0,1])$ with respect to the Lebesgue measure.
	Let $X_1,\ldots,X_n$ be i.i.d. random variables with density $f$.
	Equation~\eqref{eq:localPrivacy} defines local differential privacy. However, we define $Z_1,\ldots,Z_n$  satisfying a stronger assumption corresponding to the non-interactive case (see \cite{warner1965randomized} and \cite{evfimievski2003limiting}). It is expressed for all $1 \leq i \leq n$  as
	\begin{equation}\label{eq:noninteractif}
		\sup_{S \in \mathcal Z_i, (x, x') \in [0,1]^2} \frac{Q_i(Z_i \in S|X_i=x)}{Q_i(Z_i \in S|X_i=x')} \leq e^\alpha.
	\end{equation}
	Let $\mathcal{Q}_\alpha$ be the set of joint distributions whose marginals satisfy the condition in Equation~\eqref{eq:noninteractif}\co{, that is, the set of $\alpha$ non-interactive privacy channels associated with $X_1,\ldots,X_n$}.

	\subsection{A unified setting for discrete and continuous distributions}
	\label{sec:unifiedSetting}
	We present a unified setting and end up dealing with densities in $\mathbb L_2([0,1])$ in both the continuous and discrete cases.
	In the discrete case, $\widetilde{X_1}, \ldots, \widetilde{X_n}$ are i.i.d. random variables taking their values in $d$ classes denoted by  $\ac{0, 1\ldots,d-1}$ according to the probability vector $p=(p_0, p_1, \ldots, p_{d-1})$.
	For a given probability vector $p^0=(p^0_{0}, p^0_{1}, \ldots, p^0_{d-1})$, we want to test the null hypothesis $H_0:p=p^0$ against the alternative  $H_1:p \neq p^0$.
	In order to have a unified setting, we transform these discrete observations into continuous observations $X_1 \ldots, X_n$ with values in $[0,1]$ by the following process. For all $k \in \ac{0, \ldots, d-1 }$, if we observe $\widetilde{X_i}=k$, 
	we generate $X_i$ by a  uniform distribution on the interval $[k/d, (k+1)/d)$. Note that the variables $X_1 \ldots, X_n$  are i.i.d. with common density $f $ defined for all $x \in [0,1]$ by
	$$ f(x) = \sum_{k=0}^{d-1}  d p_k \un_{ [\frac{k}{d}, \frac{k+1}{d} )}(x). $$
	Similarly, for the probability vector $p^0$, we define the corresponding density $f_0$ for $x \in [0,1]$ by 
	$$ f_0(x) = \sum_{k=0}^{d-1}  d p^0_{k} \un_{ [\frac{k}{d}, \frac{k+1}{d} )}(x). $$
	So\co{, given the definitions of $f$ and $f_0$,} we have the equivalence $ p=p^0 \iff f=f_0$.
	The following equation highlights the connection between the separation rates for densities and for probability vectors. We have
	\begin{equation}
		\label{eq:norm2ContDisc}
		\|f-f_0\|^2_2 = d \sum_{k=0}^{d-1} (p_k-p^0_{k})^2.
	\end{equation}

	\subsection{Separation rates}

	We now define a privacy mechanism and a testing procedure based on the private views $Z_1, \ldots, Z_n$.
	We want to test 
	\begin{equation} \label{def:hypotest}
		H_{0}: f = f_0,~~~~~ \textrm{versus}~~~~~ H_{1}:   f \neq f_0, 
	\end{equation}
	from  $\alpha$-local differentially private views of $X_1,\ldots,X_n$.


	The twist on classical goodness-of-fit testing is in the fact that the samples $(X_1,\ldots,X_n)$ from $f$ are unobserved, we only observe their private views.
	%
	%
	%
	For $\alpha > 0$ and $\gamma\in (0,1)$, we construct an $\alpha$-local differentially private channel $Q\in \mathcal{Q}_\alpha$ and a $\gamma$-level test $ \Delta_{\gamma,Q}$  such that
	$$ \mathbb{P}_{Q^n_{f_0}}(\Delta_{\gamma,Q}(Z_1,\ldots,Z_n)=1) \leq \gamma,$$
	where 
	\begin{align*}
		&\mathbb P_{Q^n_{f_0}}((Z_1, \ldots, Z_n) \in \prod_{i=1}^n S_i) =  \int \prod_i Q_i(Z_i \in S_i| X_i=x_i) f_0(x_i)dx_i,
	\end{align*}
	and $Q_i$ is the $i$-th marginal channel of $Q$. 
	
	We then define the uniform separation rate of the test $  \Delta_{\gamma,Q}$ over the  class $\mathcal{C}$ as
	\begin{align}\label{seprateprivate}
		&\rho_n \left(  \Delta_{\gamma,Q},  \mathcal{C} , \beta, f_0 \right) = \inf \Big\{ \rho > 0; \sup_{f \in  \mathcal{C}, \left\| f-f_0 \right\|_2 > \rho } \mathbb{P}_{Q^n_{f}} \left(  \Delta_{\gamma,Q}(Z_1,\ldots, Z_n) = 0 \right) \leq \beta  \Big\}.
	\end{align} 
	A good  channel  $Q$ and a good test $\Delta_{\gamma,Q} $  are characterized by a small uniform separation rate.  This leads us to the definition of the $\alpha$-private minimax separation rate over the class $\mathcal{C} $
	\begin{equation}\label{minimaxprivaterate}
		\rho_n^*\left( \mathcal{C} , \alpha,\gamma, \beta , f_0\right) = \inf_{ Q\in\mathcal{Q_\alpha} } \inf_{\Delta_{\gamma,Q}} \rho_n \left(  \Delta_{\gamma,Q},  \mathcal{C} , \beta, f_0 \right),
	\end{equation}
	where the infimum is taken over all possible $\alpha$-private channels $Q$ and all $\gamma$-level test $ \Delta_{\gamma,Q}$ based on the private observations $Z_1, \ldots, Z_n$. 
	
	Let us now introduce the classes of alternatives $  \mathcal{C} $ over which we will establish $\alpha$-private minimax separation rates. 
	
	\begin{enumerate}
		\item In the discrete case, we define
		\begin{equation}
			\label{eq:discreteClass}
			\mathcal D = \ac{f \in \mathbb L_2([0,1]) ; \exists p = (p_0,\ldots,p_{d-1})\in \mathbb R^d, \sum_{j=0}^{d-1} p_j = 1, f = \sum_{j = 0}^{d-1} p_j \un_{[j/d,(j+1)/d)}},	
		\end{equation}
		which is associated with the class of densities for multinomial distributions over $d$ classes. Then the minimax separation rate of interest will be denoted $\rho_n^*\left( \mathcal{D} , \alpha,\gamma, \beta, f_0 \right)$.
		\item In the continuous case, we consider Besov balls. To define these classes, we consider a pair of compactly supported and bounded wavelets $(\varphi, \psi)$ such that for all $J $ in $\mathbb{N}$, 
		\begin{align*}
			&\ac{2^{J/2} \varphi(2^J (\cdot) -k), k \in \Lambda(J)} \cup \ac{2^{j/2} \psi(2^j (\cdot) -k), j \geq J, k \in \Lambda(j)} 
		\end{align*}
		is an orthonormal basis of $ \mathbb{L}_2([0,1])$.  For the sake of simplicity, we consider the Haar basis where  $\varphi=\un_{[0,1)}$ and $ \psi = \un_{[0,1/2)} - \un_{[1/2,1)} $. 
		In this case, for all $j \in \mathbb{N}$, $ \Lambda(j) = \ac{0, 1, \ldots 2^{j}-1}$.
		
		We denote for all $ j \geq 0, k \in \Lambda(j)$, 
		$$ \alpha_{j,k}(f)=\int 2^{j/2} f \varphi(2^j (\cdot) -k), \text{ and }  \beta_{j,k}(f)=\int 2^{j/2} f \psi(2^j (\cdot) -k).$$
		For $R>0$ and $s>0$, the Besov ball  $\widetilde B_{s,2,\infty}(R)$ with radius $R$ associated with the Haar basis is defined as 
		$$ \widetilde {B}_{s,2,\infty}(R) = \ac{ f \in  \mathbb{L}_2([0,1]), \forall j \geq 0, \sum_{k \in \Lambda(j)} \beta_{jk}^2(f) \leq R^2 2^{-2js}}. $$
		Now note that, if $s < 1$, then there is an equivalence between the definition of $\widetilde {B}_{s,2,\infty}(R)$ and the definition of the corresponding Besov space using moduli of smoothness -- see e.g. Theorem 4.3.2 in \cite{gine2016mathematical}. And for larger $s$, Besov spaces defined with Daubechies wavelets satisfy this equivalence property, as explained in Section 4.3.5 in \cite{gine2016mathematical}.

		We introduce the following class of alternatives: for any $s>0$ and  $R>0,$ we define the set $ \mathcal{B}_{s,2,\infty}(R)$ as follows 
		\begin{equation}\label{BesovLinfty}
			\mathcal{B}_{s,2,\infty}(R) = \ac{ f\in  \mathbb{L}_2([0,1]),  f-f_0 \in \widetilde{B}_{s,2,\infty}(R)}.
		\end{equation} 
		Note that the class $ \mathcal{B}_{s,2,\infty}(R) $ depends on $f_0$ since only the regularity for the difference $f-f_0$ is required to establish the separation rates. Nevertheless, for the sake of simplicity we omit $f_0$ in the notation of this set. The minimax separation rate of interest will be denoted $\rho_n^*\left( \mathcal{B}_{s,2,\infty}(R)  , \alpha,\gamma, \beta, f_0 \right)$.

	\end{enumerate}

	\subsection{Overview of the results}
	\label{sec:overview}
	
	For any $\alpha > 0$, we define $z_\alpha = e^{2\alpha} - e^{-2\alpha} = 2\sinh(2\alpha).$
	
	\paragraph*{Continuous case.}
	The results presented in Theorems~\ref{th:lowerBound} and \ref{bornesup} can be condensed into the following conclusion that holds if $n z_\alpha^2 \geq (\log n)^{1+3/(4s)}$, $s>0$, $R>0,$  $\alpha \geq 1/\sqrt n$, $(\gamma, \beta)  \in (0,1)^2$ such that $ 2\gamma+ \beta <1$,
	\begin{align} \label{mainresult}
		&c\pa{s,R, \gamma, \beta } [(n z_\alpha^2)^{-2s/(4s+3)}  \vee n^{-2s/(4s+1)}]  \nonumber\\
		&
		\quad\quad\quad\quad\quad\quad\leq \rho_n^*\left( \mathcal{B}_{s,2,\infty}(R) , \alpha,\gamma, \beta, \un_{[0,1]} \right) \\
		&\quad\quad\quad\quad\quad\quad\quad\quad\quad\quad\leq C(s,R,\gamma, \beta) \cro{({n} \alpha^2)^{-2s/(4s+3)} \vee n^{-2s/(4s+1)}}\nonumber. 
	\end{align}

	\paragraph*{Comments.}
	\begin{enumerate}
		\item Having $n z_\alpha^2 \geq (\log n)^{1+3/(4s)}$ and $\alpha \geq 1/\sqrt n$ reduces to wanting a sample set large enough, which is a classical non-restrictive assumption.
		\item The upper bound holds for any density $f_0 \in \mathbb L_2([0,1])$ \co{up to $\|f_0\|_2$} and matches the lower bound when $f_0 = \un_{[0,1]}$, as shown in Equation~\eqref{mainresult}. So we can deduce the minimax separation rate for goodness-of-fit testing \co{of Besov densities under a non-interactive privacy constraint, when $f_0$ is nearly uniform.} It can be decomposed into two different regimes, where the rates of our upper and lower bounds match in $n$ as well as in $\alpha$, when $\alpha$ tends to 0. When $\alpha$ is larger than $n^{1/(4s+1)}$, then the minimax rate is of order $n^{-2s/(4s+1)}$, which coincides with the rate obtained in the non-private case in \cite{ingster1987minimax}. The other regime corresponds to $\alpha$ being smaller than $n^{1/(4s+1)}$. The minimax rate is then of order $(n\alpha^2)^{-2s/(4s+3)}$ and so we show a polynomial degradation in the rate due to the privacy constraints. Very similar results have been found for the estimation of the quadratic functional in \cite{butucea2020interactive}. Such a degradation has also been discovered in the problem of second moment estimation and mean estimation, as well as for the density estimation in \cite{butucea2019local}.
		\item Due to having $z_\alpha$ instead of $\alpha$, our upper and lower bounds do not match in $\alpha$ when $\alpha$ is larger than a constant but smaller than $n^{1/(4s+1)}$. This is not an issue in practice, since $\alpha$ will be taken small in order to guarantee privacy.
	\end{enumerate}

	\paragraph*{Discrete case.}
	The results presented in Theorems~\ref{th:lowerBoundDisc} and \ref{bornesupdiscret} can be condensed into the following conclusion that holds if $n \geq (z_\alpha^{-2} d^{3/2}\log d) \vee ((\alpha^2 d^{-1/2}) \wedge d^{1/2})$, $\alpha > 0$, $(\gamma, \beta)  \in (0,1)^2$ such that $ 2\gamma+ \beta <1$, 
	\begin{align} \label{mainresultDisc}
		&c\pa{\gamma, \beta } \cro{((n z_\alpha^2)^{-1/2}d^{1/4})  \vee (n^{-1/2} d^{-1/4})  }\nonumber\\
		&
		\quad\quad\quad\quad\quad\quad\quad\quad\quad\leq \rho_n^*\left( \mathcal{D}, \alpha,\gamma, \beta, \un_{[0,1]} \right)/d^{1/2} \\
		&\quad\quad\quad\quad\quad\quad\quad\quad\quad\quad\quad\quad\quad\quad\quad\quad\leq C(\gamma,\beta) \cro{\pa{(n \alpha^{2})^{-1/2} d^{1/4}} \vee \pa{ n^{-1/2} d^{-1/4}}} \nonumber. 
	\end{align}
	
	\paragraph*{Comments.}
	\begin{enumerate}
		\item Assuming that $n z_\alpha^2 \geq d^{3/2}\log d$ means that the problem gets harder with the dimension, which aligns with the interpretation of the private rate.
		\item We present matching bounds on $\rho_n^*\left( \mathcal{D}, \alpha,\gamma, \beta, \un_{[0,1]} \right)/d^{1/2}$ since it is the usual rate of interest as justified by the combination of Definition~\ref{seprate} and Equation~\eqref{eq:norm2ContDisc}. \co{This helps us conclude on the minimax separation rate for goodness-of-fit testing of discrete distributions under a non-interactive privacy constraint, when $p_0$ is nearly uniform. We present in Theorem~\ref{bornesupdiscret} an upper bound for other definitions of $p_0$ as well.} Here again, we find two regimes corresponding to the classical rate taking over if $\alpha$ is larger than $\sqrt d$. So we can see that the local privacy condition leaves the rate in $n$ unchanged, but the rate in $d$ changes drastically for the testing problem with respect to the $\mathbb L_2$-norm. Indeed, the classical testing problem with $\mathbb L_2$-separation becomes easier as the number of dimension grows, whereas the private rate exhibits the opposite behaviour. 
		\item Simultaneously and independently of our work, \cite{berrett2020locally} find similar results in the non-interactive case.
	\end{enumerate}
	
	\section{Lower bound}\label{sec:lowerBound}
	
	This section will focus on the presentation of a lower bound on the minimax separation rate defined in Equation~\eqref{minimaxprivaterate} for the problem of goodness-of-fit testing under a \co{non-interactive} differential privacy constraint. The result is presented both in the discrete and the continuous cases, when $f_0$ is the uniform density over $[0,1]$. \co{As seen in \cite{butucea2019local}, an important contribution of our lower bound focusing on the non-interactive case is that, when combined with an interactive privacy channel and a test reaching a smaller separation distance, one can conclude that interactive privacy can lead to better results than what can be achieved under non-interactive privacy in the problem under scrutiny.}
	
	The outline of the lower bound proof relies on a classical scheme, which is recalled below. Nevertheless, the implementation of this scheme in the context of local differential privacy is far from being classical, and we do it in a novel way which leads to a tight lower bound. At the end of the section, a more naive approach will be presented and shown to lead to suboptimal results.
	
	We apply a Bayesian approach, where we will define a prior distribution which corresponds to a mixture of densities such that $\|f-f_0\|_2$ is large enough. Such a starting point has been largely employed for lower bounds in minimax testing, as described in \cite{Baraud2002}. Its application is mainly due to \cite{ingster1993} and inequalities on the total variation distance from \cite{le1986asymptotic}. The result of this approach is summarized in the following lemma.
	
	\begin{lemma}
		\label{lem:lowerBoundTV}
		Let $\mathcal C \subset \mathbb L_2([0,1]).$
		Let $(\gamma, \beta)  \in (0,1)^2$ and $\delta \in [0,1)$ such that $ \gamma+ \beta +\delta <1$. Let $\rho >0$. We define
		$$ \mathcal{F}_{\rho}(\mathcal{C})=\ac{ f  \in  \mathcal{C}, \|f-f_0\|_2 \geq \rho}. $$
		Let $\alpha >0$ and let $Q\in \mathcal{Q_\alpha}$ be some \co{non-interactive} $\alpha$-private channel. 
		Let $\nu_{\rho}$ be some probability measure such that $\nu_{\rho}(\mathcal{F}_{\rho}(\mathcal{C})) \geq 1- \delta$ and let $ Q^n_{\nu_{\rho}}$ be defined, for all measurable set $A$ by
		\begin{align*}
			\Po_{Q^n_{\nu_{\rho}}} &\pa{ (Z_1, \ldots, Z_n) \in A} = \int  \Po_{Q^n_{g}} \pa{ (Z_1, \ldots, Z_n) \in A} d \nu_{\rho}(g).
		\end{align*}
		We note the total variation distance between two probability measures $\Po_1$ and $\Po_2$ as $ \| \Po_1-\Po_2 \|_{TV} = \sup_A |\Po_1(A) - \Po_2(A)| $.
		
		Then  if 
		$$
		\|  \mathbb{P}_{ Q^n_{\nu_{\rho}}} - \mathbb{P}_{ Q^n_{f_0}} \|_{TV} < 1 -  \gamma-\beta - \delta,
		$$
		we have
		$$  \inf_{\Delta_{\gamma,Q}} \rho_n \left(  \Delta_{\gamma,Q}, \mathcal{C}, \beta, f_0 \right) \geq \rho,$$
		where the infimum is taken over all possible $\gamma$-level test, hence satisfying
		$$ \mathbb{P}_{Q^n_{f_0}}(\Delta_{\gamma,Q}(Z_1,\ldots,Z_n)=1) \leq \gamma.$$
	\end{lemma}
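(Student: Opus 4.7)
The plan is to carry out a standard Le Cam / Ingster Bayesian argument, directly transcribed from the non-private setting to the privatized one where the test is a function of $Z_1,\ldots,Z_n$ rather than of $X_1,\ldots,X_n$. I would fix an arbitrary $\gamma$-level test $\Delta_{\gamma,Q}$ and show that its maximum type II error over $\mathcal F_\rho(\mathcal C)$ is strictly greater than $\beta$; combined with the definition of the uniform separation rate in \eqref{seprateprivate}, this yields $\rho_n(\Delta_{\gamma,Q},\mathcal C,\beta,f_0)\geq \rho$ for every such test, and then taking the infimum over $\gamma$-level tests gives the lemma.

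First I would lower-bound the supremum of the type II error by a Bayesian average under $\nu_\rho$, paying the $\delta$ price to absorb the mass that $\nu_\rho$ can assign outside $\mathcal F_\rho(\mathcal C)$. Since $\mathbb P_{Q^n_f}(\Delta_{\gamma,Q}=0)\in [0,1]$ and $\nu_\rho(\mathcal F_\rho(\mathcal C)^c)\leq \delta$, splitting the integral gives
\begin{align*}
\sup_{f\in \mathcal F_\rho(\mathcal C)} \mathbb P_{Q^n_f}(\Delta_{\gamma,Q}=0)
&\geq \int_{\mathcal F_\rho(\mathcal C)} \mathbb P_{Q^n_f}(\Delta_{\gamma,Q}=0)\, d\nu_\rho(f) \\
&\geq \int \mathbb P_{Q^n_f}(\Delta_{\gamma,Q}=0)\, d\nu_\rho(f) - \delta.
\end{align*}
By the definition of the mixture $\mathbb P_{Q^n_{\nu_\rho}}$ given in the statement (a Fubini identity applied to the event $\{\Delta_{\gamma,Q}=0\}$), the remaining integral equals $\mathbb P_{Q^n_{\nu_\rho}}(\Delta_{\gamma,Q}=0)$.

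Next I would apply the elementary total-variation bound $\mathbb P_1(A)\geq \mathbb P_2(A)-\|\mathbb P_1-\mathbb P_2\|_{TV}$ with $A=\{\Delta_{\gamma,Q}=0\}$, $\mathbb P_1=\mathbb P_{Q^n_{\nu_\rho}}$, $\mathbb P_2=\mathbb P_{Q^n_{f_0}}$. Since $\Delta_{\gamma,Q}$ has level $\gamma$, we have $\mathbb P_{Q^n_{f_0}}(\Delta_{\gamma,Q}=0)\geq 1-\gamma$, hence
$$\mathbb P_{Q^n_{\nu_\rho}}(\Delta_{\gamma,Q}=0)\geq 1-\gamma - \|\mathbb P_{Q^n_{\nu_\rho}}-\mathbb P_{Q^n_{f_0}}\|_{TV}.$$
Chaining this with the previous display and invoking the hypothesis $\|\mathbb P_{Q^n_{\nu_\rho}}-\mathbb P_{Q^n_{f_0}}\|_{TV}<1-\gamma-\beta-\delta$ gives $\sup_{f\in \mathcal F_\rho(\mathcal C)} \mathbb P_{Q^n_f}(\Delta_{\gamma,Q}=0)>\beta$, which concludes the argument.

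The proof itself is not where the difficulty of the paper lies: the non-interactive private structure of $Q$ does not enter this lemma at all, since the whole argument takes place abstractly on the joint laws $\mathbb P_{Q^n_f}$ of the privatized views. The hard part is displaced to the next step, namely choosing a prior $\nu_\rho$ supported on $\mathcal F_\rho(\mathcal C)$ and bounding $\|\mathbb P_{Q^n_{\nu_\rho}}-\mathbb P_{Q^n_{f_0}}\|_{TV}$ sharply in $\rho$, $n$, and $\alpha$ under the non-interactive constraint~\eqref{eq:noninteractif}; that is the refined use of Le Cam's inequality alluded to in the introduction, and the place where naive applications of existing information-processing inequalities give a suboptimal rate.
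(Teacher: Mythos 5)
Your proposal is correct and follows essentially the same argument as the paper: lower-bound the worst-case type II error by the Bayesian error under $\nu_\rho$ at the price of $\delta$, use the mixture definition (Fubini), the $\gamma$-level condition $\mathbb{P}_{Q^n_{f_0}}(\Delta_{\gamma,Q}=0)\geq 1-\gamma$, and the total variation bound to get a type II error strictly above $\beta$ for every test. Your closing remark that the privacy structure plays no role in this lemma and that the difficulty is deferred to the choice of $\nu_\rho$ also matches the paper's presentation.
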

	The idea is to establish the connection between the second kind error and the total variation distance between arbitrary distributions with respective supports in $H_0$ and $\mathcal{F}_{\rho}(\mathcal{C})$. It turns out that the closer the distributions from $H_0$ and $\mathcal{F}_{\rho}(\mathcal{C})$ are allowed to be, the higher the potential second kind error. So if we are able to provide distributions from $H_0$ and $\mathcal{F}_{\rho}(\mathcal{C})$ which are close from one another, we can guarantee that the second kind error of any test will be high. The main difficulty lies in finding the right prior distribution $\nu_\rho$ appearing in Lemma~\ref{lem:lowerBoundTV}.
	
	In the discrete case, we obtain the following lower bound.
	\begin{theorem}
		\label{th:lowerBoundDisc}
		Let $(\gamma, \beta)  \in (0,1)^2$ such that $ 2\gamma+ \beta <1$. Let $\alpha >0$. 
		
		We obtain the following lower bound for the $\alpha$-private minimax separation rate defined by Equation~\eqref{minimaxprivaterate}
		for non-interactive channels in $\mathcal{Q_\alpha}$ over the class of alternatives $  \mathcal{D}$ in Equation~\eqref{eq:discreteClass}
		\begin{align*}
			&\rho_n^*\left( \mathcal{D}, \alpha,\gamma, \beta, \un_{[0,1]}\right) / d^{1/2} \geq c\pa{\gamma, \beta} \pa{[(n z_\alpha^2)^{-1/2}d^{1/4} \wedge d^{-1/2}(\log d)^{-1/2}] \vee (n^{-1/2} d^{-1/4})}.
		\end{align*}
	\end{theorem}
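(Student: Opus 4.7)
The plan is Bayesian: invoke Lemma~\ref{lem:lowerBoundTV} with a Rademacher mixture prior, and control the resulting total variation distance by a chi-square expansion that exploits the product structure of non-interactive privacy. For a parameter $a\in(0,1/d]$ (to be optimised at the end) and paired signs $\epsilon\in\ac{-1,+1}^{d/2}$, define the probability vector $p^{\epsilon}$ by $p^{\epsilon}_{2k}=1/d+a\epsilon_k$ and $p^{\epsilon}_{2k+1}=1/d-a\epsilon_k$; the pairing guarantees $\sum_k p^{\epsilon}_k=1$, and the correspondence in Section~\ref{sec:unifiedSetting} produces a density $f_\epsilon\in\mathcal D$ satisfying $\|f_\epsilon-f_0\|_2=ad$. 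Taking $a=\rho/d$ and letting $\nu_\rho$ be uniform on $\ac{f_\epsilon}_\epsilon$ places $\nu_\rho$ entirely inside $\mathcal F_\rho(\mathcal D)$, so that the hypothesis $\nu_\rho(\mathcal F_\rho(\mathcal D))\geq 1-\delta$ of the lemma is discharged with $\delta=0$.

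Fix an arbitrary non-interactive $Q\in\mathcal Q_\alpha$ and write $M^i_\eta(z):=\int Q_i(z|x)f_\eta(x)\,dx$. Combining $\|\cdot\|_{TV}^2\leq\chi^2/4$ with Ingster's standard formula, which factorises across samples by non-interactivity, yields
\[
1+\chi^2\pa{\mathbb{P}_{Q^n_{\nu_\rho}},\mathbb{P}_{Q^n_{f_0}}}
=\mathbb{E}_{\epsilon,\epsilon'}\cro{\prod_{i=1}^n\pa{1+y_i(\epsilon,\epsilon')}},
\]
with $y_i(\epsilon,\epsilon')=\int (M^i_\epsilon-M^i_0)(M^i_{\epsilon'}-M^i_0)/M^i_0$. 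Introducing the centred local-DP kernel $K_i(z,x):=Q_i(z|x)/M^i_0(z)-1$, each $y_i$ rewrites as a bilinear Rademacher form $\epsilon^{\top} A^i\epsilon'$ whose matrix $A^i$ collects integrals of $K_i$ against the step-function orthonormal basis $\ac{\sqrt d\,\un_{[k/d,(k+1)/d)}}_{k}$ underlying the discretisation.

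The main obstacle, which the authors explicitly flag, is that a naive plug-in of standard local-DP information-processing inequalities at this point produces only a suboptimal lower bound: one has to extract the sharp dimension scaling from the Rademacher expansion itself. I would proceed in three steps: (i) apply $\prod_i(1+y_i)\leq\exp(\sum_i y_i)$, which is legitimate only while $|y_i|$ remains small, and it is this smallness constraint that produces the cap $\rho/\sqrt d\leq d^{-1/2}(\log d)^{-1/2}$ in the statement, since it forces the exponent that will appear in the subsequent Rademacher concentration to stay of order one; (ii) integrate out $\epsilon'$ and then $\epsilon$ successively by means of the Rademacher bound $\cosh t\leq e^{t^2/2}$ applied coordinatewise, reducing chi-square control to a bound on $\mathrm{tr}(AA^{\top})$ for $A=\sum_i A^i$; (iii) bound the latter by coupling the \emph{uniform in $x$} DP chi-square inequality $\int M^i_0(z) K_i(z,x)^2\,dz\leq C z_\alpha^2$ with Bessel's inequality across the step-function basis indices, in order to avoid the $\sqrt d$-loss a naive Cauchy--Schwarz would incur on the spectral quantities of $A$. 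Balancing the resulting bound then gives that the chi-square remains $O(1)$ as long as $\rho^2\lesssim d^{3/2}/(nz_\alpha^2)$, which is the private branch of the statement. The non-private branch $\rho/\sqrt d\geq c\,n^{-1/2} d^{-1/4}$ is Paninski's classical Ingster lower bound for multinomial goodness-of-fit testing in $\ell_2$, which applies a fortiori under privacy because the identity channel is $\alpha$-DP vacuously; taking the maximum of the two branches produces the conclusion of Theorem~\ref{th:lowerBoundDisc}.
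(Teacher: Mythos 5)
Your construction uses a \emph{channel-independent} prior (equal-magnitude paired perturbations $p^{\epsilon}_{2k}=1/d+a\epsilon_k$, $p^{\epsilon}_{2k+1}=1/d-a\epsilon_k$), and this is where the argument breaks: no choice of the remaining steps can recover the factor $d^{1/4}$ in the statement. Concretely, with your notation each $A^i$ is positive semidefinite with $\mathrm{tr}(A^i)\leq C z_\alpha^2$ (this is exactly what the uniform-in-$x$ bound $\int M^i_0 K_i(\cdot,x)^2\leq Cz_\alpha^2$ plus Bessel gives), so the best general bound on the quantity you need is $\|\sum_i A^i\|_F\leq \sum_i\mathrm{tr}(A^i)\leq Cnz_\alpha^2$ — and this is \emph{tight}: a legitimate $\alpha$-non-interactive channel can make every $A^i$ the same rank-one matrix (e.g.\ randomized response on the sign of a single fixed Haar coefficient), in which case $\|A\|_F = c\,nz_\alpha^2$ exactly. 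Against such a channel your mixture is detectable as soon as $a\gtrsim (dnz_\alpha^2)^{-1/2}$, i.e.\ as soon as $\|p-p^0\|_2\gtrsim (nz_\alpha^2)^{-1/2}$, so the Bayes risk for your fixed prior cannot certify anything beyond $(nz_\alpha^2)^{-1/2}$, which is $d^{1/4}$ short of the theorem. The hoped-for ``$\sqrt d$-gain via Bessel'' in your step (iii) is therefore not available; it would require the channel's chi-square capacity to be spread over many directions, which an adversarial channel will not do when the perturbation directions are fixed in advance. This is precisely the suboptimality phenomenon the paper flags in Remark~\ref{rem:naiveLB}.

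The paper's proof escapes this by making the prior \emph{adaptive to the channel}: it diagonalizes the averaged operator $K=\frac1n\sum_i K_i^*K_i$ on the span of the Haar functions, obtaining eigenvalues $\lambda_j$ with $\sum_j\lambda_j\leq z_\alpha^2$ (Lemma~\ref{lem:ineqEigen}), and perturbs along the eigenfunctions with weights $\tilde\lambda_j^{-1/2}$, $\tilde\lambda_j=(\lambda_j/z_\alpha^2)\vee L^{-1}$, so that well-observed directions get small perturbations and poorly-observed ones get perturbations capped at $\sqrt L$. This keeps every term of the chi-square exponent bounded by $n\varepsilon^2 z_\alpha^2$ while the eigenvalue-sum bound plus Cauchy--Schwarz boosts the $\mathbb L_2$ separation to order $\varepsilon L$ rather than $\varepsilon\sqrt L$ — which is exactly the missing $d^{1/4}$. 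Two smaller inaccuracies in your write-up also point to this mismatch: the $(\log d)^{-1/2}$ cap in the statement does not come from linearizing $\prod_i(1+y_i)$ (the inequality $1+y\leq e^y$ holds for all $y$), but from the Hoeffding argument needed to make the reweighted $f_\eta$ nonnegative with probability $1-\gamma$ (hence $\delta=\gamma$ in Lemma~\ref{lem:lowerBoundTV} and the assumption $2\gamma+\beta<1$; your prior would need no such cap, another sign it is not the right one); and the non-private branch is not justified by saying ``the identity channel is $\alpha$-DP vacuously'' (it is not $\alpha$-DP for any finite $\alpha$) — the correct argument, as in the paper, is that tests built on privatized data are a subclass of randomized tests on the raw data, so $\rho_n^*(\mathcal D,\alpha,\cdot)\geq\rho_n^*(\mathcal D,+\infty,\cdot)$, the latter being the classical $n^{-1/2}d^{-1/4}$ rate.
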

	
	\begin{remark}
		In parallel to our work, \cite{berrett2020locally} focus on the case when $\alpha \leq 1$ and find similar results displayed in their Theorem~6.
	\end{remark}
	
	In the continuous case, we obtain the following theorem for Besov balls.
	\begin{theorem}
		\label{th:lowerBound}
		Let $(\gamma, \beta)  \in (0,1)^2$ such that $ 2\gamma+ \beta <1$. Let $\alpha >0, R >0, s>0$. 
		
		We obtain the following lower bound for the $\alpha$-private minimax separation rate defined by Equation~\eqref{minimaxprivaterate}
		for non-interactive channels in $\mathcal{Q_\alpha}$ over the class of alternatives $  \mathcal{B}_{s,2, \infty}(R)$ defined in Equation~\eqref{BesovLinfty}
		\begin{align*}
			&\rho_n^*\left( \mathcal{B}_{s,2,\infty}(R) , \alpha,\gamma, \beta, \un_{[0,1]}\right) \geq c\pa{\gamma, \beta ,R} [[(n z_\alpha^2)^{-2s/(4s+3)} \wedge (\log n)^{-1/2}] \vee n^{-2s/(4s+1)}].
		\end{align*}
	\end{theorem}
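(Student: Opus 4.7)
The plan is to deploy Lemma~\ref{lem:lowerBoundTV} with a carefully designed Rademacher-type prior supported on a wavelet perturbation of $f_0 = \un_{[0,1]}$. Fix a resolution level $J \in \mathbb{N}$ and an amplitude $r > 0$ to be calibrated later. For each sign vector $\epsilon = (\epsilon_k)_{k \in \Lambda(J)} \in \{-1, +1\}^{2^J}$, set
\[
f_\epsilon = f_0 + r \sum_{k \in \Lambda(J)} \epsilon_k \psi_{J,k},
\]
and let $\nu_\rho$ be the uniform measure on $\epsilon$. Orthonormality of the Haar basis gives $\|f_\epsilon - f_0\|_2 = r\, 2^{J/2}$, so the separation budget is $\rho \asymp r\, 2^{J/2}$. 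Membership in $\widetilde{B}_{s,2,\infty}(R)$ is equivalent to $r \leq R\, 2^{-J(s+1/2)}$, and the density condition $f_\epsilon \geq 0$ forces $r\, 2^{J/2} \leq 1$, which is exactly what will later produce the $(\log n)^{-1/2}$ truncation.

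Next I would reduce the total variation bound to a chi-squared computation via $\|\mathbb{P}_{Q^n_{\nu_\rho}} - \mathbb{P}_{Q^n_{f_0}}\|_{TV}^2 \leq \tfrac{1}{4}\bigl(\mathbb{E}_{\epsilon,\epsilon'} \prod_{i=1}^n (1 + T_i(\epsilon,\epsilon')) - 1\bigr)$, where the non-interactive structure yields the product form with per-sample cross terms
\[
T_i(\epsilon,\epsilon') = \int \frac{(M^i_{f_\epsilon}(z) - M^i_{f_0}(z))(M^i_{f_{\epsilon'}}(z) - M^i_{f_0}(z))}{M^i_{f_0}(z)}\, dz.
\]
Writing $\delta M^i_\epsilon(z) = r \sum_k \epsilon_k A^i_k(z)$ with $A^i_k(z) = \int Q_i(z|x)\,\psi_{J,k}(x)\,dx$, the crucial observation is that $\psi_{J,k}$ is the normalized difference of indicators of two adjacent subintervals of length $2^{-J-1}$; applying the $\alpha$-LDP ratio constraint to the two conditional masses on those subintervals yields the sharpened pointwise estimate $|A^i_k(z)| \lesssim z_\alpha\, 2^{-J/2}\, B^i_k(z)$, where $B^i_k(z) = \int_{I_{J,k}} Q_i(z|x)\,dx$ satisfies $\int B^i_k(z)\, dz = 2^{-J}$. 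This factor $2^{-J/2}$ at the channel level (rather than $2^{J/2}$ from the raw size of $\psi_{J,k}$) is the gain that distinguishes this refined argument from the naive information-processing approach and ultimately produces the denominator $4s+3$ rather than $4s+1$.

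With that pointwise control, the main obstacle is to combine the per-coordinate bounds so that the expectation over $\epsilon,\epsilon'$ factorizes correctly without losing the $2^{-J/2}$ gain. Using $\prod_i(1+T_i) \leq \exp(\sum_i T_i)$ and then exploiting that $\mathbb{E}_{\epsilon,\epsilon'}\epsilon_k\epsilon'_{k'} = 0$ for $k \neq k'$ to evacuate off-diagonal contributions, the chi-squared reduces essentially to
\[
\prod_{k \in \Lambda(J)} \cosh\!\bigl(C\, n\, z_\alpha^2\, r^2\, 2^{-J}\bigr) \;\leq\; \exp\!\Bigl(C' \tfrac{(n\, z_\alpha^2\, r^2)^2}{2^{J}}\Bigr),
\]
which is $O(1)$ as soon as $n\, z_\alpha^2\, r^2 \lesssim 2^{J/2}$. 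Intersecting with $r \leq R\, 2^{-J(s+1/2)}$ and optimizing in $J$, the two constraints equalize at $2^J \asymp (R^2 n z_\alpha^2)^{2/(4s+3)}$, producing the separation $\rho \asymp R\, 2^{-Js} \asymp R^{3/(4s+3)}(n z_\alpha^2)^{-2s/(4s+3)}$. The density constraint $r\, 2^{J/2} \leq 1$ becomes binding only in the regime that gives the $(\log n)^{-1/2}$ truncation in the statement, and the non-private floor $n^{-2s/(4s+1)}$ in the maximum follows either from applying the same scheme while ignoring the privacy gain, or by invoking the classical non-private Ingster lower bound (which any private test must also satisfy). The main obstacle throughout is the refined per-sample estimate in the previous paragraph: a direct application of Duchi--Jordan--Wainwright-type inequalities at the level of $\|f_\epsilon - f_0\|_{TV}^2$ or $\|f_\epsilon-f_0\|_2^2$ loses the $2^{-J}$ factor and yields only the suboptimal rate $(n z_\alpha^2)^{-2s/(4s+1)}$.
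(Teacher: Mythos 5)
Your setup (Lemma~\ref{lem:lowerBoundTV}, chi-squared reduction, Besov and positivity constraints, the non-private floor) is fine, and your pointwise LDP estimate on $A^i_k$ is essentially the ingredient behind the paper's trace bound. But the central step fails: the per-sample cross term is a full quadratic form $T_i(\epsilon,\epsilon')=n^{-1}\,r^2\,\epsilon^\top G\,\epsilon'$-type expression with Gram matrix $G_{kk'}=\sum_i\int A^i_k(z)A^i_{k'}(z)/\tilde f_{0,i}(z)\,d\mu_i(z)$, and for a general non-interactive channel this matrix is \emph{not} diagonal. You cannot "evacuate the off-diagonal contributions" by noting $\E_{\epsilon,\epsilon'}[\epsilon_k\epsilon'_{k'}]=0$, because that expectation sits inside $\prod_i(1+T_i)\le\exp(\sum_iT_i)$: zero-mean terms in the exponent do not disappear. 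Your pointwise estimate only tells you that every entry of $G$ is $O(z_\alpha^2 2^{-J})$; it does not prevent, say, $G\asymp z_\alpha^2 2^{-J}\un\un^\top$, a rank-one matrix saturating the trace bound. This is not a repairable technicality for your prior: a perfectly admissible $\alpha$-LDP channel that spends its whole budget on the single direction $v=2^{-J/2}\sum_k\psi_{J,k}$ (e.g.\ randomized response on the sign $v(X_i)\in\{-1,1\}$) realizes exactly this situation, and under it your uniform-amplitude mixture is detectable as soon as $r\gtrsim (n z_\alpha^2)^{-1/2}$, since $\langle f_\epsilon-f_0,v\rangle = r\,2^{-J/2}\sum_k\epsilon_k$ is of order $r$ with constant probability and a privatized empirical mean of $v$ estimates this functional at rate $(n\alpha^2)^{-1/2}$. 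Your calibration requires $r\asymp 2^{J/4}(nz_\alpha^2)^{-1/2}\gg (nz_\alpha^2)^{-1/2}$, so at the claimed optimum the total variation in Lemma~\ref{lem:lowerBoundTV} is not small for that channel, and the fixed Haar prior can only certify a rate strictly worse than $(nz_\alpha^2)^{-2s/(4s+3)}$.

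The paper's proof is built precisely to avoid this: the prior is \emph{channel-dependent}. Given $Q$, it forms the averaged operator $K=\frac1n\sum_iK_i^*K_i$, takes as perturbation directions an orthonormal eigenbasis $(u_j)$ of $K$ restricted to the Haar span (so the cross terms diagonalize exactly, which is what your computation implicitly assumed), and, crucially, modulates the amplitudes as $\varepsilon\,\tilde\lambda_j^{-1/2}$ with $\tilde\lambda_j=(\lambda_j/z_\alpha^2)\vee L^{-1}$, putting large weight on directions the channel barely sees and small weight on well-observed ones. The trace bound $\sum_j\lambda_j\le z_\alpha^2$ (Lemma~\ref{lem:ineqEigen}, proved by the same LDP-ratio argument you use pointwise, combined with Parseval) then gives simultaneously $\|f_\eta-f_0\|_2\gtrsim\varepsilon L$ and $\E[L^2_{Q^n_{\nu_\rho}}]\le\exp(n^2\varepsilon^4z_\alpha^4L)$, which is where the $L^{3/4}$ gain, hence the exponent $2s/(4s+3)$, genuinely comes from; the $(\log n)^{-1/2}$ arises from a Hoeffding bound ensuring positivity and Besov membership of the randomly mixed eigen-directions. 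To fix your argument you would have to adapt the prior to the channel in this way; the refined per-coordinate estimate alone is not sufficient.
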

	
	\begin{remark}
		These theorems represent a major part of our contributions and lead to the construction of the inequalities presented in Section~\ref{sec:overview}. Note that $(n z_\alpha^2)^{-2s/(4s+3)} \wedge (\log n)^{-1/2}$ reduces to $(n z_\alpha^2)^{-2s/(4s+3)}$ for $n$ large enough and we can reduce the formulation of Theorem~\ref{th:lowerBoundDisc} in the same way with a condition on $n$ being large enough.
	\end{remark}

	\paragraph*{Sketch of proof.} 
	We want to find the largest $\mathbb L_2$-distance between the initial density $f_0$ under the null hypothesis and the density in the alternative hypothesis such that their transformed counterparts by an $\alpha$-private channel $Q$ cannot be discriminated by a test. 
	We will rely on the singular vectors of $Q$ in order to define densities and their private counterparts with ease. Employing bounds on the singular values of $Q$, we define a mixture of densities such that they have a bounded $\mathbb L_2$-distance to $f_0 = \un_{[0,1]}$. We obtain a sufficient condition for the total variation distance between the densities in the private space to be small enough for both hypotheses to be indistinguishable. Then we ensure that the functions that we have defined are indeed densities, and in the continuous case belong to the regularity class $\mathcal{B}_{s,2,\infty}(R)$. Collecting all these elements, the conclusion relies on Lemma~\ref{lem:lowerBoundTV}.
	
	\begin{remark}
		\label{rem:naiveLB}
		The total variation distance is a good criterion in order to determine whether two distributions are distinguishable. 
		Another natural idea to prove Theorem~\ref{th:lowerBound} is to bound the total variation distance between two private densities by the total variation distance between the densities of the original samples, up to some constants depending on the privacy constraints. Following this intuitive approach, we can provide a lower bound using Theorem 1 in \cite{duchi2013localcomplete} combined with Pinsker's inequality. This approach has been used with success in density estimation in \cite{butucea2019local}. However, the resulting lower bound does not match the upper bound for the separation rates of goodness-of-fit testing presented in our Section~\ref{sec:upperBound}.
	\end{remark}
	\section{Definition of a test and privacy mechanism}
	\label{sec:upperBound}
	
	We will firstly define a testing procedure coupled with a privacy mechanism. Their application provides an upper bound on the minimax separation rate for any density $f_0$. The bounds obtained are presented in the right-hand side of Equations~\eqref{mainresult} and \eqref{mainresultDisc} for the continuous and the discrete cases respectively. The test and privacy mechanism will turn out to be minimax optimal since the upper bounds will match the lower bounds obtained in Section~\ref{sec:lowerBound}.
	
	Let us first propose a  transformation of the data, satisfying the differential privacy constraints. 
	\subsection{Privacy mechanism}

	We consider the privacy mechanism introduced in \cite{butucea2019local}. It relies on Laplace noise, which is classical as a privacy mechanism.
	However, applying it to the correct basis with the corresponding scaling is critical in finding optimal results. We denote by $\varphi$ the indicator function on $[0,1)$
	$$ \forall x, \varphi(x) = \un_{[0,1)}(x),$$
	and for all integer $L \geq 1$, we set, for all $k \in \ac{0, \ldots, L-1}$, for all $x \in [0,1)$, 
	$$ \varphi_{L,k}(x) = \sqrt{L} \varphi( Lx-k).$$
	The integer $L$ will be taken as $L= 2^J$ for some $J\geq0$ in the continuous case, and we choose $L=d$ in the discrete case. 
	We define,  for all $i \in \ac{1, \ldots,n}$, the vector $Z_{i,L} = (Z_{i,L,k})_{ k \in \ac{0,\ldots, L-1}}$, by  
	\begin{equation}\label{ChanelCristina}
		\forall k \in \ac{0,\ldots, L-1}, \ Z_{i,L,k}= \varphi_{L,k}(X_i) + \sigma_L W_{i,L,k},
	\end{equation}
	where $( W_{i,L,k})_{1\leq i \leq n, k \in \ac{0,\ldots, L-1}}$ are i.i.d.~Laplace distributed random variables with variance $1$ and
	$$
	\sigma_L=2 \sqrt 2\frac{\sqrt{L}}{\alpha}. 
	$$
	%
	
	\begin{lemma}
		\label{lem:densityPrivate}
		For any $i$, denote $q_{i,L}(\cdot|x)$ the density of the random vector $Z_{i,L}$ with respect to the probability measure $\mu_i$ conditionally to $X_i = x$. Then 
		$$\sup_{S \in \mathcal Z_{i,L}, (x, x')\in[0,1]^2} \frac{Q_i(Z_{i,L} \in S|X_i=x)}{Q_i(Z_{i,L} \in S|X_i=x')} \leq e^\alpha$$
		if and only if there exists $\Omega \in \mathcal Z_{i,L}$ with $\mu_i(Z_{i,L} \in \Omega) = 1$ such that 
		$$\frac{q_{i,L}(z|x)}{q_{i,L}(z|x')} \leq e^\alpha$$
		for any $z \in \Omega$ and any $(x,x') \in [0,1]^2$.
	\end{lemma}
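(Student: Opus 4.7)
The plan is to prove the two implications separately, treating this as a standard equivalence between a set--based privacy condition and the pointwise density ratio condition. The easier direction is to show that the density ratio bound implies the set--based bound. Assuming the existence of $\Omega \in \mathcal Z_{i,L}$ of full $\mu_i$-measure on which $q_{i,L}(z|x)/q_{i,L}(z|x') \leq e^\alpha$, for any $S \in \mathcal Z_{i,L}$ we can write $Q_i(Z_{i,L} \in S|X_i = x) = \int_{S \cap \Omega} q_{i,L}(z|x)\, d\mu_i(z)$, apply the pointwise inequality inside the integral, and factor out $e^\alpha$ to obtain $e^\alpha Q_i(Z_{i,L} \in S|X_i = x')$ uniformly in $S$ and in $(x,x') \in [0,1]^2$.

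For the reverse direction, I would fix an arbitrary pair $(x,x') \in [0,1]^2$ and consider the set $A_{x,x'} = \{z : q_{i,L}(z|x) > e^\alpha q_{i,L}(z|x')\}$. If $\mu_i(A_{x,x'}) > 0$, then integrating the strict inequality over $A_{x,x'}$ would give $Q_i(Z_{i,L} \in A_{x,x'}|X_i = x) > e^\alpha Q_i(Z_{i,L} \in A_{x,x'}|X_i = x')$, contradicting the set--level hypothesis applied to $S = A_{x,x'}$. Hence $\mu_i(A_{x,x'}) = 0$ for every fixed pair $(x,x')$.

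The main obstacle is upgrading this pair--by--pair null set to a single null set $\Omega$ on which the ratio inequality holds simultaneously for all $(x,x') \in [0,1]^2$, since this parameter set is uncountable. To handle this I would exploit the specific structure of the channel in \eqref{ChanelCristina}: conditionally on $X_i = x$, the vector $Z_{i,L}$ is, up to the deterministic shift $(\varphi_{L,k}(x))_{k}$, a vector of independent Laplace variables, so $z \mapsto q_{i,L}(z|x)$ is an explicit product of Laplace densities and is jointly continuous in $x$ for each $z$. Taking a countable dense subset $D \subset [0,1]$ and setting $\Omega = \bigcap_{(x,x') \in D^2} A_{x,x'}^c$ yields a set of full $\mu_i$-measure on which the ratio bound holds for all $(x,x') \in D^2$; continuity of $q_{i,L}(z|\cdot)$ then extends the inequality to all $(x,x') \in [0,1]^2$, producing the $\Omega$ required in the statement.
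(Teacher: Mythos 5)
Both implications in your proposal follow essentially the same route as the paper: the density-ratio bound gives the set-level bound by integrating over $S\cap\Omega$ (your version is in fact more direct than the paper's, which bounds the ratio $Q_i(S|x)/Q_i(S|x')$ by $e^{2\alpha}$ times its reciprocal and then takes a square root), and the converse is obtained by testing the set-level bound on the bad set $A_{x,x'}=\{q_{i,L}(\cdot|x)>e^\alpha q_{i,L}(\cdot|x')\}$, which is exactly the paper's deduction that $e^\alpha q_{i,L}(z|x')-q_{i,L}(z|x)\ge 0$ holds $\mu_i$-a.e.\ from the nonnegativity of its integral over every $S$. You are also right to flag the uniformity of $\Omega$ over the uncountable family of pairs $(x,x')$, a point the paper passes over in silence.

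However, the justification you give for that last step is incorrect: $x\mapsto q_{i,L}(z|x)$ is \emph{not} continuous in $x$. By Equation~\eqref{ChanelCristina}, the conditional density depends on $x$ only through the vector $(\varphi_{L,k}(x))_{k}$, and $\varphi_{L,k}=\sqrt L\,\un_{[k/L,(k+1)/L)}$ is a step function, so the density jumps whenever $x$ crosses a grid point $k/L$; the ``countable dense set plus continuity'' extension therefore fails as written (note also that $x=1$ gives a conditional density attained by no $x<1$, so a dense set could miss that value entirely). The repair is immediate and simpler than continuity: since $(\varphi_{L,k}(x))_k$ takes only finitely many values (at most $L+1$), there are only finitely many distinct conditional densities $q_{i,L}(\cdot|x)$. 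Choose one representative $x$ in each cell $[k/L,(k+1)/L)$ together with $x=1$, let $\Omega$ be the complement of the union of the finitely many $\mu_i$-null sets $A_{x,x'}$ over pairs of representatives, and the ratio bound on $\Omega$ then holds verbatim for all $(x,x')\in[0,1]^2$, because $q_{i,L}(z|x)$ coincides exactly with the density at the representative of the cell containing $x$. With that one sentence replaced, your argument is correct and, on this uniformity issue, more careful than the paper's own proof.
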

	
	\begin{proof}
		Assume there exists $\Omega$ with $\mu_i(Z_{i,L} \in \Omega) = 1$ such that $\frac{q_{i,L}(z|x)}{q_{i,L}(z|x')} \leq e^\alpha$ for any $z \in \Omega$.
		Let $\tilde S \in \mathcal Z_{i,L}$ and $S = \tilde S \cap \Omega$.
		$$
		\frac{Q_i(Z_{i,L} \in \tilde S|X_i = x)}{Q_i(Z_{i,L} \in \tilde S|X_i = x')} = \frac{Q_i(Z_{i,L} \in S|X_i = x)}{Q_i(Z_{i,L} \in S|X_i = x')}.
		$$
		Then
		\begin{align*}
			\frac{Q_i(Z_{i,L} \in S|X_i = x)}{Q_i(Z_{i,L} \in S|X_i = x')}&= \frac{\int_S q_{i,L}(z|x) d\mu_i(z)}{\int_S q_{i,L}(z|x')d\mu_i(z)} \\
			&\leq \frac{\int_S q_{i,L}(z|x') e^\alpha d\mu_i(z)}{\int_S q_{i,L}(z|x) e^{-\alpha}d\mu_i(z)} = \frac{Q_i(Z_{i,L} \in S|X_i = x')}{Q_i(Z_{i,L} \in S|X_i = x)} e^{2\alpha}.
		\end{align*}
		So
		$$
		\frac{Q_i(Z_{i,L} \in \tilde  S|X_i = x)}{Q_i(Z_{i,L} \in \tilde  S|X_i = x')} \leq e^\alpha.
		$$
		Assume that $Q \in \mathcal{Q}_\alpha$.
		Then for any $S \in \mathcal Z_{i,L}$, we have $Q_i(Z_{i,L} \in S|X_i = x) \leq e^\alpha Q_i(Z_{i,L} \in S|X_i = x').$ That is, for any $S \in \mathcal Z_{i,L}$,
		$$
		\int_S (e^\alpha q_{i,L}(z|x') - q_{i,L}(z|x)) d\mu_i(z) \geq 0.
		$$
		So there exists $\Omega$ with $\mu_i(Z_{i,L} \in \Omega) = 1$ such that $\frac{q_{i,L}(z|x)}{q_{i,L}(z|x')} \leq e^\alpha$ for any $z \in \Omega$.
		
	\end{proof}

	\begin{lemma}
		To each random variable $X_i$ of the sample set $(X_1,\ldots, X_n)$, we associate the vector $Z_{i,L} = (Z_{i,L,k})_{ k \in \ac{0, \ldots, L-1}}$. The random vectors $(Z_{1,L},\ldots,Z_{n,L})$  are  non-interactive  $\alpha$-local differentially private views of  the samples $(X_1,\ldots,X_n)$. Namely, they satisfy the condition in Equation~\eqref{eq:noninteractif}.
	\end{lemma}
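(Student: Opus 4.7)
The plan is to invoke Lemma~\ref{lem:densityPrivate} to reduce the claim to a pointwise bound on the ratio of conditional densities $q_{i,L}(z\mid x)/q_{i,L}(z\mid x')$, and to verify that this ratio is at most $e^\alpha$ for every $z\in\mathbb{R}^L$ and every $(x,x')\in[0,1]^2$. The non-interactive structure is automatic: by construction $Z_{i,L}$ depends only on $X_i$ and on the Laplace vector $(W_{i,L,k})_{k}$, which is independent across $i$ and of the whole sample $(X_1,\dots,X_n)$, so the channel $Q_i$ will satisfy Equation~\eqref{eq:noninteractif} as soon as the privacy bound is in place.

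First I would write out the conditional density explicitly. Since a centered Laplace distribution with variance $1$ has density $(1/\sqrt 2)\exp(-\sqrt 2\,|w|)$, conditionally on $X_i=x$ the coordinates $Z_{i,L,k}$ are independent Laplace with mean $\varphi_{L,k}(x)$ and scale $\sigma_L/\sqrt 2$, so that with respect to Lebesgue measure on $\mathbb{R}^L$,
$$q_{i,L}(z\mid x)=\prod_{k=0}^{L-1}\frac{1}{\sigma_L\sqrt 2}\,\exp\!\Bigl(-\frac{\sqrt 2}{\sigma_L}\,|z_k-\varphi_{L,k}(x)|\Bigr).$$
Applying the reverse triangle inequality term by term,
$$\frac{q_{i,L}(z\mid x)}{q_{i,L}(z\mid x')}\le \exp\!\Bigl(\frac{\sqrt 2}{\sigma_L}\sum_{k=0}^{L-1}|\varphi_{L,k}(x)-\varphi_{L,k}(x')|\Bigr).$$

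Next I would estimate the $\ell^1$-sensitivity $\sum_k|\varphi_{L,k}(x)-\varphi_{L,k}(x')|$. Because $\varphi_{L,k}=\sqrt L\,\un_{[k/L,(k+1)/L)}$, the functions $(\varphi_{L,k})_k$ have pairwise disjoint supports, so for any $x\in[0,1)$ exactly one coordinate of the vector $(\varphi_{L,k}(x))_k$ equals $\sqrt L$ while the rest vanish. The sum is therefore $0$ when $x$ and $x'$ lie in the same dyadic cell and at most $2\sqrt L$ otherwise. Plugging this uniform bound together with $\sigma_L=2\sqrt 2\,\sqrt L/\alpha$ collapses the exponent to exactly $\alpha$, which is the desired inequality.

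The only mildly delicate point is the bookkeeping of constants in the last step: the factor $2\sqrt 2$ in $\sigma_L$ is calibrated precisely to cancel the $\sqrt 2$ from the variance-one Laplace density together with the supremum $2\sqrt L$ of the basis sensitivity. Since the density ratio bound holds pointwise on all of $\mathbb{R}^L$, the hypothesis of Lemma~\ref{lem:densityPrivate} is satisfied with $\Omega=\mathbb{R}^L$, and the non-interactive $\alpha$-local differential privacy of $(Z_{1,L},\ldots,Z_{n,L})$ follows.
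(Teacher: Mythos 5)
Your proposal is correct and follows essentially the same route as the paper: compute the ratio of the product Laplace densities, bound the exponent using the fact that at most one $\varphi_{L,k}(x)$ is nonzero (so the sensitivity is at most $2\sqrt L$), note the calibration $\sigma_L = 2\sqrt2\,\sqrt L/\alpha$ gives exactly $e^\alpha$, and conclude via Lemma~\ref{lem:densityPrivate} together with the independence across $i$ for non-interactivity. The only cosmetic difference is that you bound the exponent by the $\ell^1$-sensitivity $\sum_k|\varphi_{L,k}(x)-\varphi_{L,k}(x')|$ while the paper bounds it by $\sum_k(|\varphi_{L,k}(x)|+|\varphi_{L,k}(x')|)$; both yield the same $2\sqrt L$ and the same conclusion.
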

	
	The proof in the continuous case can also be found in \cite{butucea2019local} (see Proposition 3.1). 
	We recall here the main arguments for the sake of completeness. 
	\begin{proof}
		The random vectors $(Z_{i,L})_{ 1 \leq i \leq n}$ are i.i.d.~by definition. For any $x_i, x'_i$ in $[0,1]$, for any $z_i \in \mathbb{R}^{L}$, 
		\begin{align*}
			\frac{q_{i,L}(z_i|x_i)}{q_{i,L}(z_i|x'_i)} &= \prod_{k = 0}^{L-1} \exp \Bigg[ \sqrt 2 \frac{\ab{z_{i,k}-\varphi_{L,k}(x'_i)}- \ab{z_{i,k}-\varphi_{L,k}(x_i)} }{\sigma_L} \Bigg]\\
			&\leq \exp\cro{ \sum_{k = 0}^{L-1} \frac{\sqrt 2}{\sigma_L} \pa{ \ab{\varphi_{L,k}(x'_i)} + \ab{\varphi_{L,k}(x_i)}}}.
		\end{align*}
		Since $\varphi_{L,k}(x_i) \neq 0$ for a single value of $ k \in  \ac{0, \ldots, L-1}$, we get
		\begin{eqnarray*}
			\frac{q_{i,L}(z_i|x_i)}{q_{i,L}(z_i|x_i')}  \leq \exp \cro{ \frac{2 \sqrt 2  \| \varphi_{L,k} \|_{\infty}}{\sigma_L}} \leq e^\alpha,
		\end{eqnarray*}
		by definition of $ \sigma_L$, which concludes the proof by application of Lemma~\ref{lem:densityPrivate}. 
	\end{proof}

	\subsection {Definition of the test }
	Let $f_0$ be some fixed density in $\mathbb L_2([0,1])$.
	Our aim is now to define a testing procedure for the testing problem defined in Equation~\eqref{def:hypotest} from the observation of the vectors $(Z_1, \ldots, Z_n)$. Our test statistic $\hat{T}_L$ is defined as 
	\begin{align}\label{stattest}
		\hat{T}_L=\frac1{n(n-1)}\sum_{ i \neq l = 1}^n  \sum_{k = 0}^{L-1} \pa{ Z_{i,L,k} - \alpha^0_{L,k}} \pa{ Z_{l,L,k} - \alpha^0_{L,k}}, 
	\end{align}
	where $ \alpha^0_{L,k} = \int_0^1 \varphi_{L,k}(x) f_0(x) dx$. 
	
	We consider the test function
	\begin{equation}\label{def:test}
		\Delta_{L,\gamma, Q}(Z_1, \ldots, Z_n)= \un_{\hat{T}_L  > t^{0}_L(1-\gamma)}, 
	\end{equation}
	where $ t^{0}_L(1-\gamma)$ denotes the $(1-\gamma)$-quantile of $ \hat{T}_L $ under $H_0$. Note that this quantile can be estimated by simulations, under the hypothesis $ f=f_0$.  We can indeed simulate the vector $(Z_1, \ldots, Z_n)$ if the density of $(X_1, \ldots, X_n)$ is  assumed to be $f_0$. Hence the test rejects the null hypothesis $H_0$ if 
	$$ \hat{T}_L  > t^{0}_L(1-\gamma).  $$
	The test is of level $\gamma$ by definition of the threshold. 
	
	\paragraph*{Comments.} 
	\begin{enumerate}
		\item In a similar way as in \cite{FL06}, the test is based on an estimation of the quantity $\|f - f_0 \|_2^2.$ Note indeed  that 
		$\hat{T}_L$ is an unbiased estimator of $  \| \PJ(f-f_0)  \|_2^2$, where $\PJ$ denotes the orthogonal projection in $\mathbb{L}_2([0,1])$ onto the space generated by the functions $( \varphi_{L,k}, k \in \ac{0,\ldots, L-1})$.   In the discrete case, $f$ and $f_0$ belong to $S_L$ and  $ \PJ(f-f_0)  = f-f_0$. In this case, 
		$$\| \PJ(f-f_0)  \|_2^2 = \| f-f_0  \|_2^2 =   d  \sum_{k = 0}^{d-1} (p_k-p^0_{k})^2.$$

		\item Note that, in the discrete case, we obtain the following expression for the test statistic 
		\begin{align}\label{stattestdiscret}
			\hat{T}_d=\frac d{n(n-1)}  \sum_{k = 0}^{d-1} \sum_{ i \neq l = 1}^n  \pa{ \un_{\widetilde{X_i}=k }-p^0_{k}}\pa{ \un_{\widetilde{X_l}=k} -p^0_{k}}.
		\end{align}
		It is interesting to compare this expression with the $\chi^2$ statistics, which can be written as 
		$$  \sum_{k = 0}^{d-1}  \sum_{ i , l = 1}^n    \frac {\pa{ \un_{\widetilde{X_i}=k }-p^0_{k}}\pa{ \un_{\widetilde{X_l}=k} -p^0_{k}}}{n p^0_{k}}.
		$$ 
		Hence, besides the normalization of each term in the sum by $ p^0_{k}$ in the ${\chi}^2$ test, the main difference lies in the fact that we remove the diagonal terms (corresponding to $i=l$) in our test statistics.
	\end{enumerate}
	
	In the next section, we provide non-asymptotic theoretical results for the power of this test.

	\subsection{Upper bound for the second kind error of the test}
	
	We first provide an upper bound  for the second kind error of our test and privacy channel in a general setting. 
	\begin{theorem} \label{majogene}
		Let $(X_1, \ldots, X_n)$ be i.i.d.~with common density $f$ on $[0,1]$.  Let $f_0$ be some given density on $[0,1]$. We assume that $f$ and $f_0$ belong to $\mathbb{L}_2([0,1])$.  From the  observation of the random vectors $(Z_1,\ldots,Z_n)$ defined by Equation~\eqref{ChanelCristina}, for a given $\alpha >0$, we test the hypotheses 
		\begin{equation*}
			H_{0}: f = f_0,~~~~~ \textrm{versus}~~~~~ H_{1}:   f \neq f_0. 
		\end{equation*}
		We consider the test  $ \Delta_{L,\gamma,Q}$ defined by Equation~\eqref{def:test} with $\hat T_L$ defined in Equation~\eqref{stattest}. 
		The test is obviously of level $\gamma$ by definition of the threshold $t^{0}_L(1-\gamma)$, namely we have
		$$ \mathbb{P}_{Q^n_{f_0}}  \pa{\hat{T}_L \geq   t^{0}_L(1-\gamma)} \leq \gamma.$$
		Under the assumption that
		\begin{equation}\label{cond:majogene}
			\| \PJ(f-f_0)  \|_2^2 \geq \sqrt{ \Var_{Q^n_{f_0}}(\hat{T}_L)/ \gamma } +  \sqrt{\Var_{Q^n_{f}}\pa{\hat{T}_L}/\beta},
		\end{equation}
		the second kind error of the test is controlled by $\beta$, namely we have 
		\begin{equation}\label{errbetaBis}
			\mathbb{P}_{Q^n_f}  \pa{\hat{T}_L \leq  t^{0}_L(1-\gamma)} \leq \beta.
		\end{equation}
		Moreover, we have
		\begin{equation}\label{maj:var}	
			\Var_{Q^n_{f}}\pa{\hat{T}_L} \leq C \cro{\frac{(\sqrt L \|f \|_2+ \sigma_L^2)}{n}  \| \PJ(f-f_0)\|_2^2 + \frac{(\|f\|_2^2 + \sigma_L^4 ) L }{n^2}}.
		\end{equation}
	\end{theorem}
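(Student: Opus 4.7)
The level claim is immediate from the definition of $t^0_L(1-\gamma)$ as the $(1-\gamma)$-quantile of $\hat T_L$ under $H_0$, so the real content is the second-kind error bound and the variance estimate. For the second-kind error, my starting point is to compute $\E_{Q^n_f}[\hat T_L]$. Write $\hat T_L$ as a U-statistic of order two with symmetric kernel $(z,z')\mapsto \sum_k (z_k-\alpha^0_{L,k})(z'_k-\alpha^0_{L,k})$. By independence of the $Z_i$'s across $i$ and since the Laplace noise in \eqref{ChanelCristina} is centered, $\E_{Q^n_f}[Z_{i,L,k}]=\alpha_{L,k}(f)$, so
$$\E_{Q^n_f}[\hat T_L] = \sum_{k=0}^{L-1}\pa{\alpha_{L,k}(f)-\alpha^0_{L,k}}^2 = \|\PJ(f-f_0)\|_2^2.$$
Under $H_0$ this mean vanishes, hence Chebyshev's inequality applied to the level constraint $\Po_{Q^n_{f_0}}(\hat T_L\geq t^0_L(1-\gamma))\leq\gamma$ yields $t^0_L(1-\gamma)\leq \sqrt{\Var_{Q^n_{f_0}}(\hat T_L)/\gamma}$. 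Under $H_1$, a second application of Chebyshev's inequality gives
$$\Po_{Q^n_f}\pa{\hat T_L\leq t^0_L(1-\gamma)}\leq \frac{\Var_{Q^n_f}(\hat T_L)}{\pa{\|\PJ(f-f_0)\|_2^2 - t^0_L(1-\gamma)}^2},$$
which is at most $\beta$ as soon as $\|\PJ(f-f_0)\|_2^2 - t^0_L(1-\gamma)\geq \sqrt{\Var_{Q^n_f}(\hat T_L)/\beta}$. Chaining the two bounds produces exactly the sufficient condition \eqref{cond:majogene}.

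For the variance estimate \eqref{maj:var}, I would apply the Hoeffding decomposition to the U-statistic. Setting $Y_i = Z_{i,L}-\alpha^0$, $\mu=\E[Y_i]$ with $\|\mu\|^2=\|\PJ(f-f_0)\|_2^2$, and $\Sigma=\Cov(Y_i)$, the standard computation gives
$$\Var_{Q^n_f}(\hat T_L) = \frac{4(n-2)}{n(n-1)}\,\mu^\top\Sigma\mu \;+\; \frac{2}{n(n-1)}\,\|\Sigma\|_F^2.$$
For the linear part, $\mu^\top\Sigma\mu = \Var\pa{g(X)} + \sigma_L^2\|\mu\|^2$ with $g=\sum_k \mu_k\varphi_{L,k}$; because the $\varphi_{L,k}$ have pairwise disjoint supports, $g^2$ is piecewise constant, so $\Var(g(X))\leq \int g^2 f \leq \|f\|_2\|g^2\|_2\leq \sqrt L\,\|f\|_2\,\|\mu\|^2$, which is the first term of \eqref{maj:var}. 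For the quadratic part, disjoint supports allow an explicit computation: writing $p_k=\int_{[k/L,(k+1)/L)}f$, one gets $\Sigma_{k,k}=Lp_k(1-p_k)+\sigma_L^2$ and $\Sigma_{k,k'}=-Lp_k p_{k'}$ for $k\neq k'$. The Frobenius norm is then bounded, after using the elementary identity $L\sum_k p_k^2 = \|\PJ f\|_2^2\leq \|f\|_2^2$, by $C\pa{L\|f\|_2^2+L\sigma_L^4}$, producing the second term of \eqref{maj:var}.

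The main obstacle will be the Frobenius-norm bound on $\Sigma$: without exploiting the disjoint supports of the $\varphi_{L,k}$, a naive treatment of the covariance would produce spurious factors of $L^2$ or $\|f\|_\infty$ instead of the sharp $L\|f\|_2^2$, and the resulting variance bound would not match the optimal rate. The linear part is analogously sensitive — writing $\int g^2 f \leq \|f\|_2\|g^2\|_2$ rather than $\|f\|_\infty\|\mu\|^2$ is what yields the $\sqrt L\,\|f\|_2$ factor. Once these two structural observations are in place, the rest is routine bookkeeping of U-statistic variances and two Chebyshev bounds.
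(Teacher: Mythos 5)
Your proposal is correct and follows essentially the same route as the paper: the same two Chebyshev bounds on the quantiles under $H_0$ and $H_1$, and a Hoeffding-type decomposition of the U-statistic variance with the same structural inputs (disjoint supports, $\varphi_{L,k}^2=\sqrt L\,\varphi_{L,k}$, and the Cauchy--Schwarz step $\int g^2 f\leq \sqrt L\,\|f\|_2\,\|\mu\|^2$), the only cosmetic difference being that you express the degenerate part through $\|\Sigma\|_F^2$ with explicit multinomial-type entries while the paper bounds $\Var_{Q^n_f}(h_L(Z_1,Z_2))$ directly. One harmless slip: the exact variance is $\frac{4}{n}\mu^\top\Sigma\mu+\frac{2}{n(n-1)}\|\Sigma\|_F^2$ rather than $\frac{4(n-2)}{n(n-1)}\mu^\top\Sigma\mu+\frac{2}{n(n-1)}\|\Sigma\|_F^2$ (since $\Var(Y_1^\top Y_2)=\|\Sigma\|_F^2+2\mu^\top\Sigma\mu$), but the piece you drop is $\frac{1}{n-1}$ times the linear term you already control, so it is absorbed into the constant $C$ and the bound \eqref{maj:var} is unaffected.
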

	We give here a sketch of proof of  Theorem~\ref{majogene}. The complete proof of this result is given in Section~\ref{sec:proofUB} of the appendix. Note that it is not fundamentally different from non-private proofs given in \cite{FL06}.
	\paragraph*{Sketch of proof.} 
	We want to establish a condition on $ f-f_0 $, under which the second kind error of the test is controlled by $\beta$.		Denoting by $t_L(\beta)$ the $ \beta$-quantile of $\hat{T}_L $ under $\mathbb{P}_{Q^n_f} $, the condition in Equation~\eqref{errbetaBis} holds as soon as $
	t^{0}_L(1-\gamma) \leq  t_L(\beta)$. Hence, we provide an upper bound for $  t^{0}_L(1-\gamma)$ and a lower bound for $ t_L(\beta)$. 
	By Chebyshev's inequality, we obtain that on the one hand,
	\begin{equation}
		\label{eq:threshBoundBis}
		t^{0}_L(1-\gamma) \leq \sqrt{ \Var_{Q^n_{f_0}}(\hat{T}_L )/ \gamma },
	\end{equation}
	and on the other hand,
	\begin{equation}\label{eq:threshBound2Bis}
		\| \PJ(f-f_0)\|_2^2 - \sqrt{\Var_{Q^n_{f}}\pa{\hat{T}_L }/\beta} \leq t_L(\beta).
	\end{equation}
	We deduce from the inequalities in Equations~\eqref{eq:threshBoundBis} and  \eqref{eq:threshBound2Bis} that Equation~\eqref{errbetaBis} holds as soon as 
	$$ \| \PJ(f-f_0)\|_2^2 \geq   \sqrt{ \Var_{Q^n_{f_0}}(\hat{T}_L )/ \gamma }+  \sqrt{\Var_{Q^n_{f}}\pa{\hat{T}_L}/\beta}.$$
	The main ingredient  to control the variance terms is a control of the variance for  U-statistics of order two which  relies on Hoeffding's decomposition -- see e.g. \cite{Serfling} Lemma A p. 183.  The proof is given in Section~\ref{sec:proofUB}.

	We obtain the following corollary of Theorem \ref{majogene}. It states a result that will be used in order to obtain  an upper bound on the minimax rate both in the discrete and the continuous cases.
	\begin{corollary}\label{cor:uppergene}
		Under the same assumptions as in Theorem \ref{majogene}, we obtain that Equation~\eqref{errbetaBis} holds, that is, the second kind error of the test is controlled by $\beta$ provided that
		\begin{equation}\label{corr:majogene}
			\| \PJ(f-f_0)\|_2^2 \geq  C(\gamma,\beta) \frac{(\|f \|_2 + \|f_0 \|_2 + \sigma_L^2) \sqrt{L}}{n}.
		\end{equation}
		
	\end{corollary}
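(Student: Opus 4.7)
The plan is to deduce Corollary~\ref{cor:uppergene} directly from Theorem~\ref{majogene} by substituting the variance bound in Equation~\eqref{maj:var} into the sufficient condition in Equation~\eqref{cond:majogene}, and then simplifying the resulting expression into the claimed clean form.

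First I would apply Equation~\eqref{maj:var} twice: once with $f$ replaced by $f_0$ (noting that $\PJ(f_0-f_0)=0$ kills the first term in the bound), which yields
\[
\Var_{Q^n_{f_0}}(\hat T_L) \leq C\,\frac{(\|f_0\|_2^2+\sigma_L^4)L}{n^2},
\]
and once with $f$ itself, retaining the full right-hand side. Denoting $A = \|\PJ(f-f_0)\|_2^2$, the condition in Equation~\eqref{cond:majogene} becomes, after applying $\sqrt{a+b}\leq \sqrt{a}+\sqrt{b}$,
\[
A \geq C(\gamma)\,\frac{\sqrt{(\|f_0\|_2^2+\sigma_L^4)L}}{n} + C(\beta)\,\sqrt{\frac{(\sqrt{L}\,\|f\|_2+\sigma_L^2)A}{n}} + C(\beta)\,\frac{\sqrt{(\|f\|_2^2+\sigma_L^4)L}}{n}.
\]

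Next, the coupling term $\sqrt{(\sqrt L \|f\|_2 + \sigma_L^2)A/n}$ involves $A$ on the right-hand side, so I would absorb it into the left via the AM-GM bound $\sqrt{uv}\leq u/(2\lambda) + \lambda v/2$ applied with $v=A$ and a suitably chosen $\lambda$ (say chosen so the $A/2$ term can be moved to the left, leaving $A/2$ on the LHS). This produces the new sufficient condition
\[
\frac{A}{2} \geq C(\gamma)\,\frac{\sqrt{(\|f_0\|_2^2+\sigma_L^4)L}}{n} + C(\beta)\,\frac{\sqrt{L}\,\|f\|_2+\sigma_L^2}{n} + C(\beta)\,\frac{\sqrt{(\|f\|_2^2+\sigma_L^4)L}}{n}.
\]

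Finally I would simplify the three terms on the right using the elementary inequality $\sqrt{a^2+b^2}\leq a+b$ for $a,b\geq 0$, which gives $\sqrt{(\|f\|_2^2+\sigma_L^4)L}\leq \sqrt{L}(\|f\|_2+\sigma_L^2)$ and analogously for $f_0$, together with $\sigma_L^2\leq \sqrt{L}\,\sigma_L^2$ (since $L\geq 1$) to handle the middle term. Collecting everything yields
\[
A \geq C(\gamma,\beta)\,\frac{(\|f\|_2+\|f_0\|_2+\sigma_L^2)\sqrt{L}}{n},
\]
which is exactly Equation~\eqref{corr:majogene}. The only mildly delicate step is the AM-GM absorption of the $\sqrt{A}$ term, but it is routine once the appropriate $\lambda$ is fixed; the rest is bookkeeping with subadditivity of $\sqrt{\cdot}$ on nonnegative terms.
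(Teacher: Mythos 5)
Your proposal is correct and follows essentially the same route as the paper: bound both variances via Equation~\eqref{maj:var} (with the $\|\PJ(f-f_0)\|_2^2$ term vanishing under $f_0$), split by subadditivity of the square root, absorb the cross term involving $\sqrt{\|\PJ(f-f_0)\|_2^2}$ into the left-hand side by an arithmetic--geometric mean inequality, and conclude from the condition in Equation~\eqref{cond:majogene}. The only differences are cosmetic bookkeeping of constants and the elementary simplifications $\sqrt{(\|f\|_2^2+\sigma_L^4)L}\leq \sqrt{L}(\|f\|_2+\sigma_L^2)$ and $\sigma_L^2\leq\sqrt{L}\,\sigma_L^2$, which the paper also uses implicitly.
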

	In the next sections, we derive from this result upper bounds for the minimax separation rate over Besov balls in the continuous case, and conditions on the $l_2$-distance between $p$ and $p^0$ to obtain a prescribed power for the test in the discrete case. 
	\subsection{Upper bound for the separation distance in the discrete case}
	
	The following theorem provides a sufficient condition on the separation distance between the probability vectors $p$ and $p^0$ for both error kinds of the test to be controlled by $\gamma$ and $\beta$, respectively. This sufficient condition corresponds to an upper bound on the minimax rate $\rho_n^*\left( \mathcal{D}, \alpha,\gamma, \beta, f_0 \right)/d^{1/2}$ in the discrete case.
	
	\begin{theorem} \label{bornesupdiscret}
		Let $p^0=(p^0_{0}, p^0_{1}, \ldots, p^0_{d-1})$ be some given  probability vector. Let $(X_1, \ldots, X_n)$ be i.i.d.~with values in the finite set $\ac{0, 1\ldots,d-1}$ and with common distribution defined by the probability vector $p=(p_0, p_1, \ldots, p_{d-1})$.  
		
		From the  observation of the random vectors $(Z_1,\ldots,Z_n)$ defined by Equation~\eqref{ChanelCristina} for a given $\alpha >0$ with $L=d$, we want to test the hypotheses 
		\begin{equation*}
			H_{0}: p = p^0,~~~~~ \textrm{versus}~~~~~ H_{1}:   p \neq p^0. 
		\end{equation*}
		We consider the test  $ \Delta_{d,\gamma,Q}$ defined by Equation~\eqref{def:test}, which has a first kind error of $\gamma$. 
		The second kind error of the test is controlled by $\beta$, provided that
		$$
		\sqrt{\sum_{i=0}^{d-1} (p_i-p^0_{i})^2}  \geq C(\gamma,\beta) \frac{d^{-1/4}}{n^{1/2}} \pa{d^{1/4} \cro{\pa{\sum_{k=0}^{d-1} (p^0_{k})^2}^{1/4} + n^{-1/2}} + d^{1/2}\alpha^{-1}}.
		$$
		Since $\sum_{k=0}^{d-1} (p^0_{k})^2 \leq 1$, the second kind error of the test is controlled by $\beta$, provided that
		\begin{equation}\label{eq:conddiscrete}
			\sqrt{\sum_{i=0}^{d-1} (p_i-p^0_{i})^2} \geq C(\gamma,\beta) n^{-1/2} (1\vee[d^{1/4}\alpha^{-1}]).
		\end{equation}
	\end{theorem}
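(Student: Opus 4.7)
The plan is to specialize Corollary~\ref{cor:uppergene} to the discrete case with $L=d$. The identifications we need are immediate: in the discrete setting the densities $f$ and $f_0$ lie in the span of $(\varphi_{d,k})_{k}$, so $\PJ(f-f_0) = f-f_0$ and by Equation~\eqref{eq:norm2ContDisc} we have $\|\PJ(f-f_0)\|_2^2 = \|f-f_0\|_2^2 = d\sum_{k=0}^{d-1}(p_k-p^0_k)^2$; likewise $\|f_0\|_2 = \sqrt{d}\,(\sum_k (p^0_k)^2)^{1/2}$, and the Laplace scale parameter introduced after Equation~\eqref{ChanelCristina} becomes $\sigma_d^2 = 8d/\alpha^2$. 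Plugging these into Corollary~\ref{cor:uppergene}, the second-kind error is at most $\beta$ whenever
\begin{equation*}
d \sum_{k=0}^{d-1} (p_k-p^0_k)^2 \;\geq\; C(\gamma,\beta)\,\frac{\bigl(\|f\|_2 + \|f_0\|_2 + 8d/\alpha^2\bigr)\sqrt{d}}{n}.
\end{equation*}

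The right-hand side still depends on the unknown $\|f\|_2$, so the next step is to disentangle it via the triangle inequality $\|f\|_2 \leq \|f_0\|_2 + \|f-f_0\|_2$. Setting $x := \|f-f_0\|_2 = \sqrt{d}\,(\sum_k(p_k-p^0_k)^2)^{1/2}$, this produces a quadratic inequality of the form $x^2 \geq a x + b$ with $a = C\sqrt{d}/n$ and $b = C(2\|f_0\|_2 + 8d/\alpha^2)\sqrt{d}/n$. A sufficient condition is $x \geq a + \sqrt{b}$; applying $\sqrt{A+B} \leq \sqrt{A}+\sqrt{B}$ to split $\sqrt{b}$ into a $\|f_0\|_2$-contribution and a privacy-contribution, then substituting $\|f_0\|_2 = \sqrt{d}\,(\sum_k(p^0_k)^2)^{1/2}$ and dividing both sides by $\sqrt{d}$, recovers precisely the three-term bound displayed in the theorem.

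The simplified inequality~\eqref{eq:conddiscrete} then follows from the trivial estimate $\sum_k (p^0_k)^2 \leq (\max_k p^0_k)\sum_k p^0_k \leq 1$, which bounds $(\sum_k (p^0_k)^2)^{1/4}$ by $1$, combined with $n^{-1} \leq n^{-1/2}$; both corrections get absorbed into the factor $1 \vee d^{1/4}\alpha^{-1}$. I do not expect any genuine difficulty here: the entire statement is a direct specialization of Corollary~\ref{cor:uppergene}, and the only subtlety --- handling the dependence of the variance bound on the unknown $\|f\|_2$ via a quadratic-inequality argument --- is routine.
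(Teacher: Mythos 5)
Your proposal is correct and follows essentially the same route as the paper: specialize Corollary~\ref{cor:uppergene} with $L=d$, use $\PJ(f-f_0)=f-f_0$ and the triangle inequality $\|f\|_2\leq\|f_0\|_2+\|f-f_0\|_2$, resolve the resulting quadratic inequality in $\|f-f_0\|_2$, and substitute $\|f-f_0\|_2^2=d\sum_k(p_k-p^0_k)^2$, $\|f_0\|_2^2=d\sum_k(p^0_k)^2$, $\sigma_d^2=8d/\alpha^2$. Your explicit $x\geq a+\sqrt{b}$ step is just a slightly more detailed writing of the paper's implicit handling of the $\|f\|_2$ dependence, and the final simplification to Equation~\eqref{eq:conddiscrete} is the same.
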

	
	\begin{remark}
		Equation~\eqref{eq:conddiscrete} displays a rate that is optimal in $d,n$, when $\alpha$ is smaller than $d^{1/4}$. Besides, the rate in $\alpha$ matches the lower bound asymptotically when $\alpha$ converges to 0.
		The upper bound presented in Theorem~1 from \cite{berrett2020locally} tackles the case when $\alpha$ is smaller than 1 and they find the same rate as ours in their Corollary~2. They present an additional test statistic in order to refine their rates when $p^0$ is not a uniform vector.
	\end{remark}
	
	\begin{corollary}
		\label{cor:upperBoundDiscL2}
		We assume that there exists an absolute constant $\kappa$ such that 
		\begin{equation}
			\label{eq:condBoundL2}
			d \sum_{k=0}^{d-1} (p^0_{k})^2 \leq \kappa.
		\end{equation}
		Then the second kind error of the test is controlled by $\beta$, provided that
		\begin{equation}\label{eq:conddiscreteBorneF}
			\sqrt{\sum_i (p_i-p^0_{i})^2} \geq C(\gamma,\beta,\kappa) \cro{\pa{d^{-1/4}n^{-1/2}} \vee \pa{d^{1/4}n^{-1/2} \alpha^{-1}} \vee n^{-1}}.
		\end{equation}
	\end{corollary}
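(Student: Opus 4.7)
The plan is to derive Corollary \ref{cor:upperBoundDiscL2} as a direct consequence of Theorem \ref{bornesupdiscret} by plugging in the uniform-like bound on $\|f_0\|_2^2$ into the sufficient condition already established there. The only real work is bookkeeping.

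First I would rewrite the hypothesis in a form directly usable in the theorem: since $\|f_0\|_2^2 = d \sum_{k=0}^{d-1} (p^0_k)^2 \le \kappa$, we immediately get the bound
\[
\left(\sum_{k=0}^{d-1} (p^0_k)^2\right)^{1/4} \le \left(\frac{\kappa}{d}\right)^{1/4} = \kappa^{1/4}\, d^{-1/4}.
\]
Next I would substitute this into the first (tighter) sufficient condition in Theorem \ref{bornesupdiscret}, namely
\[
\sqrt{\sum_{i=0}^{d-1} (p_i-p^0_{i})^2} \;\ge\; C(\gamma,\beta)\, \frac{d^{-1/4}}{n^{1/2}}\left( d^{1/4}\Big[\Big(\sum_{k=0}^{d-1}(p^0_k)^2\Big)^{1/4} + n^{-1/2}\Big] + d^{1/2}\alpha^{-1}\right).
\]
Inserting the bound gives
\[
\frac{d^{-1/4}}{n^{1/2}}\Big(d^{1/4}\,\kappa^{1/4}\,d^{-1/4} + d^{1/4} n^{-1/2} + d^{1/2}\alpha^{-1}\Big)
= \kappa^{1/4}\,d^{-1/4}\,n^{-1/2} + n^{-1} + d^{1/4}\,n^{-1/2}\,\alpha^{-1}.
\]

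Finally I would absorb the constant $\kappa^{1/4}$ into a new constant $C(\gamma,\beta,\kappa)$ and upper bound the sum of three nonnegative terms by $3$ times their maximum to obtain the claimed condition
\[
\sqrt{\sum_{i=0}^{d-1}(p_i - p^0_i)^2} \;\ge\; C(\gamma,\beta,\kappa)\Big[(d^{-1/4} n^{-1/2}) \vee (d^{1/4} n^{-1/2}\alpha^{-1}) \vee n^{-1}\Big].
\]
No genuine obstacle is expected: the result is purely a corollary obtained by specializing the general bound from Theorem \ref{bornesupdiscret} to densities $f_0$ with bounded $\mathbb{L}_2$-norm (the nearly-uniform regime). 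The only point worth flagging is that the hypothesis \eqref{eq:condBoundL2} is exactly $\|f_0\|_2^2 \le \kappa$ under the unified continuous/discrete setting of Section \ref{sec:unifiedSetting}, which is what makes the substitution clean.
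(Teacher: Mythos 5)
Your proposal is correct and is exactly the intended derivation: the paper presents Corollary~\ref{cor:upperBoundDiscL2} as an immediate consequence of the first sufficient condition in Theorem~\ref{bornesupdiscret}, obtained by bounding $\bigl(\sum_{k}(p^0_k)^2\bigr)^{1/4}$ by $\kappa^{1/4}d^{-1/4}$ and absorbing constants, just as you do. The algebra (each term becoming $\kappa^{1/4}d^{-1/4}n^{-1/2}$, $n^{-1}$, $d^{1/4}n^{-1/2}\alpha^{-1}$, and then bounding the sum by a constant times the maximum) checks out.
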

	
	\begin{remark}
		If we also assume the bound on $d \sum_{k=0}^{d-1} (p^0_{k})^2$ as expressed in Equation~\eqref{eq:condBoundL2}, we find optimal rates in $d,n$ if $n \geq (\alpha^2 d^{-1/2}) \wedge d^{1/2}$. The assumption in Equation~\eqref{eq:condBoundL2} in Lemma~\ref{cor:upperBoundDiscL2} is equivalent to assuming that the function $f_0$ defined in Section~\ref{sec:unifiedSetting} belongs to $\mathbb L^2([0,1])$. It restricts $p^0$ to vectors that are close to being uniform. This coincides with the lower bound on the rate found when $f_0$ is a uniform density.
	\end{remark}

	\subsection{Upper bound for the minimax separation rate over Besov balls}
	
	We provide an upper bound on the uniform separation rate for our test and privacy channel over Besov balls in Theorem~\ref{bornesup}.
	\begin{theorem} \label{bornesup}
		Let $(X_1, \ldots, X_n)$ be i.i.d.~with common density $f$ on $[0,1]$.  Let $f_0$ be some given density on $[0,1]$. We assume that $f$ and $f_0$ belong to $\mathbb{L}_2([0,1])$.
		
		We observe  the random vectors $(Z_1,\ldots,Z_n)$ defined by Equation~\eqref{ChanelCristina} for a given $\alpha >0$ with  the following value for $L$ :
		we assume that $ L=L^*$, where $L^*= 2^{J^*}$, and $J^*$ is the smallest integer $J$ such that $2^J \geq (n \alpha^2)^{2/(4s+3)}\wedge n^{2/(4s+1)}$. 
		
		We want to test the hypotheses 
		\begin{equation*}
			H_{0}: f = f_0,~~~~~ \textrm{versus}~~~~~ H_{1}:   f \neq f_0. 
		\end{equation*}
		
		We consider the test  $ \Delta_{L^*,\gamma,Q}$ defined by Equation~\eqref{def:test}.
		The uniform separation rate, defined by Equation~\eqref{seprateprivate},  of the test $ \Delta_{L^*,\gamma,Q}$
		over  $  \mathcal{B}_{s,2, \infty}(R)$ defined by Equation~\eqref{BesovLinfty}  
		satisfies  for all $n \in \mathbb{N}^*$, $R >0$, $\alpha \geq 1/\sqrt{n} $, $(\gamma, \beta)  \in (0,1)^2$ such that $ \gamma+ \beta <1$
		\begin{align*}
			&\rho_n \left( \Delta_{L^*,\gamma,Q},  \mathcal{B}_{s,2, \infty}(R)  , \beta, f_0 \right) \leq C( s,R,\|f_0\|_2,\gamma, \beta) \cro{({n} \alpha^2)^{-2s/(4s+3)} \vee n^{-2s/(4s+1)}} .
		\end{align*}
	\end{theorem}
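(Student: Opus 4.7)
The plan is to reduce the theorem to Corollary~\ref{cor:uppergene} via a classical bias/variance trade-off argument on the wavelet projection, with $L$ tuned to balance the approximation error against both the non-private and private fluctuation terms. Concretely, I would start by decomposing
\begin{align*}
\|\PJ(f-f_0)\|_2^2 = \|f - f_0\|_2^2 - \|(I - \PJ)(f-f_0)\|_2^2,
\end{align*}
and then invoke Corollary~\ref{cor:uppergene}: the second kind error is controlled by $\beta$ as soon as
\begin{align*}
\|f-f_0\|_2^2 - \|(I-\PJ)(f-f_0)\|_2^2 \geq C(\gamma,\beta)\,\frac{(\|f\|_2 + \|f_0\|_2 + \sigma_L^2)\sqrt{L}}{n}.
\end{align*}

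Next I would bound the three quantities appearing on both sides in terms of $s$, $R$ and $L$. For $f-f_0 \in \widetilde B_{s,2,\infty}(R)$ and $L=2^J$, the space $S_L$ is exactly the scaling space at resolution $J$, so
\begin{align*}
\|(I - \PJ)(f-f_0)\|_2^2 = \sum_{j \geq J}\sum_{k \in \Lambda(j)} \beta_{j,k}^2(f-f_0) \leq R^2\sum_{j\geq J} 2^{-2js} \leq C(s)\, R^2\, L^{-2s}.
\end{align*}
Also, since $f$ and $f_0$ are densities, the coarse coefficient $\alpha_{0,0}(f-f_0) = \int(f-f_0) = 0$, so the same Besov bound gives $\|f - f_0\|_2 \leq C(s)R$ and hence $\|f\|_2 \leq \|f_0\|_2 + C(s)R$. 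Using $\sigma_L^2 = 8L/\alpha^2$, the sufficient condition becomes
\begin{align*}
\|f-f_0\|_2^2 \;\geq\; C(s,R)\,L^{-2s} \;+\; C(\gamma,\beta)\,\Bigl[\,C(s,R,\|f_0\|_2)\,\frac{\sqrt{L}}{n} \;+\; \frac{8\,L^{3/2}}{n\alpha^2}\,\Bigr].
\end{align*}

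Finally I would plug in $L = L^* \asymp (n\alpha^2)^{2/(4s+3)} \wedge n^{2/(4s+1)}$ and verify that all three terms on the right are $\lesssim [(n\alpha^2)^{-4s/(4s+3)} \vee n^{-4s/(4s+1)}]$, which yields the claimed separation rate after taking square roots. The assumption $\alpha \geq 1/\sqrt n$ guarantees that $L^* \geq 1$ so the procedure is well defined. The only slightly delicate point is the final balancing step: one needs to split into the two regimes according to whether $\alpha \leq n^{1/(4s+1)}$ (private rate dominates and $L^* \asymp (n\alpha^2)^{2/(4s+3)}$ balances the bias $L^{-2s}$ against the privacy variance $L^{3/2}/(n\alpha^2)$ while keeping the sampling variance $\sqrt L/n$ negligible) or $\alpha > n^{1/(4s+1)}$ (the non-private rate dominates and $L^* \asymp n^{2/(4s+1)}$ balances bias against the sampling variance, with the privacy variance then absorbed). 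Checking that the crossover at $\alpha \asymp n^{1/(4s+1)}$ is exactly where the two rates coincide is the main computational step, but it is arithmetic rather than conceptual.
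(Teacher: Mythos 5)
Your proposal is correct and follows essentially the same route as the paper's proof: reduce to Corollary~\ref{cor:uppergene}, bound the bias term $\|(I-\PJ)(f-f_0)\|_2^2$ by $C(s)R^2 2^{-2J s}$ and $\|f\|_2$ by $\|f_0\|_2 + C(s)R$ via the Besov assumption, substitute $\sigma_L^2 = 8L/\alpha^2$, and then optimize $L = L^*$ with the same two-regime split at $\alpha \asymp n^{1/(4s+1)}$. The final balancing computation you outline is exactly the one carried out in the paper.
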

	
	The proof of this result is in Section~\ref{sec:proofUB} of the appendix.
	
	\paragraph*{Comments.} 
	\begin{enumerate}
		\item When the sample set $(X_1, \ldots, X_n)$ is directly observed,  \cite{FL06}  propose a testing procedure with uniform separation rate over the set $  \mathcal{B}_{s,2, \infty}(R)$  controlled by 
		$$ C( s,R, \gamma, \beta) n^{-2s/(4s+1)} ,$$
		which is an optimal result, as proved in \cite{ingster1993}.
		Hence we obtain here a loss in the uniform separation rate, due to the fact that we only observe  $\alpha$-differentially private views  of the original sample. This loss occurs when $ \alpha \leq n^{1/(4s+1)}$. Otherwise, we get the same rate as when the original sample is observed. Comparing this result with the lower bound from Section~\ref{sec:lowerBound}, we conclude that the rate is optimal. 
		\item Finally, having $\alpha < 1/\sqrt n$ represents an extreme case, where the sample size is really low in conjunction with a very strict privacy condition. In such a range of $\alpha$, $J^*$ is taken equal to 0, but this does not lead to optimal rates.
	\end{enumerate}

	The test proposed in Theorem \ref{bornesup} depends on the smoothness parameter $s$ of the Besov ball $ \mathcal{B}_{s,2, \infty}(R) $ via the parameter $J^*$. In a second step, we will propose a test which is adaptive to the smoothness parameter $s$. Namely, in Section \ref{adaptation}, we construct an aggregated testing procedure, which is independent of the smoothness parameter and achieves the minimax separation rates established in Equation~\eqref{mainresult} over a wide range of Besov balls simultaneously, up to a logarithmic term.

	\section{Adaptive tests }\label{adaptation}
	
	In Section~\ref{sec:upperBound}, we have defined in the continuous case a testing procedure which depends on the parameter $J$. The performances of the test depend on this parameter. We have optimized the choice of $J$ to obtain the smallest possible upper bound for the separation rate over the set $\mathcal{B}_{s,2, \infty}(R)$. Nevertheless, the test is not adaptive since this optimal choice of $J$ depends on the smoothness parameter $s$. 
	
	In order to obtain adaptive procedure, we propose, as in \cite{FL06}  to aggregate a collection of tests. For this, we introduce the set 
	$$ \J=\ac{J \in \mathbb{N}, 2^J \leq n^2}$$
	and the aggregated procedure will be based on the collection of test statistics $ ( \hat{T}_{2^J}, J \in \J)$ defined by \eqref{stattest}. 
	
	In Theorem \ref{bornesup}, the 	testing procedure is based on the observation of the  random vectors $(Z_1,\ldots,Z_n)$ defined by Equation~\eqref{ChanelCristina}  with 
	$L= 2^{J^*}$ for the optimized value of $J^*$. Hence, the private views of the original sample depend on the unknown parameter $s$. In order to build the aggregated  procedure, we can no more use the optimized value $J^*$ of $J$ and we need to observe the  random vectors $(Z_1,\ldots,Z_n)$ for all $J \in \J$. In order to guaranty the $\alpha$-local differential privacy, we have to increase slightly the variance of the Laplace perturbation. The privacy mechanism is specified in the following lemma. 
	\begin{lemma} \label{privacy-adapt}	 
		We consider the set $ \J=\ac{J \in \mathbb{N}, 2^J \leq n^2}$. 
		We define, for all $i \in \ac{1, \ldots,n}$,  for all    $J \in \J$,  the vector $\tilde{Z}_{i,2^J} = (\tilde{Z}_{i,2^J,k})_{ k \in \ac{0,\ldots, 2^J-1}}$, by  
		\begin{equation}\label{ChanelCristina-adapt}
			\forall k \in \ac{0,\ldots, 2^J-1}, \ \tilde{Z}_{i,2^J,k}= \varphi_{2^J,k}(X_i) + \tilde \sigma_{2^J} W_{i,2^J,k},
		\end{equation}
		where $( W_{i,2^J,k})_{1\leq i \leq n, k \in \ac{0,\ldots, 2^J-1}}$ are i.i.d.~Laplace distributed random variables with variance $1$ and
		$$
		\tilde \sigma_{2^J}=2 \sqrt 2 |\J| \frac{2^{J/2}}{\alpha}. 
		$$
		For all $1 \leq i \leq n$, we define the random vector $\tilde{Z}_i = (\tilde{Z}_{i,2^J}, J \in \J)$. 
		The random vectors $(\tilde{Z}_i, 1 \leq i \leq n )$ are  non-interactive  $\alpha$-local differentially private views of  the samples $(X_1,\ldots,X_n)$. Namely, they satisfy the condition in Equation~\eqref{eq:noninteractif}.
		
	\end{lemma}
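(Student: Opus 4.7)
The plan is to mimic the proof of the non-adaptive privacy lemma (the unlabeled lemma preceding Theorem~\ref{majogene}), exploiting the fact that the extra factor $|\J|$ in the noise scale $\tilde\sigma_{2^J}$ is precisely calibrated to absorb the $|\J|$ independent releases. First I would invoke Lemma~\ref{lem:densityPrivate} to reduce the $\alpha$-local non-interactive privacy condition \eqref{eq:noninteractif} for $\tilde{Z}_i$ to a pointwise bound on the ratio of conditional densities: it suffices to show that for every $i$, every $(x,x') \in [0,1]^2$, and (almost) every realization $\tilde z_i = (\tilde z_{i,2^J})_{J \in \J}$,
\[
\frac{\tilde q_i(\tilde z_i \mid x)}{\tilde q_i(\tilde z_i \mid x')} \leq e^{\alpha},
\]
where $\tilde q_i(\cdot \mid x)$ is the conditional density of $\tilde Z_i$ given $X_i = x$.

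Next I would use the fact that the Laplace variables $(W_{i,2^J,k})_{J \in \J, k}$ are mutually independent across $J$ (and across $k$), so that conditionally on $X_i$, the blocks $\tilde Z_{i,2^J}$ indexed by $J \in \J$ are mutually independent. This gives the factorization
\[
\tilde q_i(\tilde z_i \mid x) = \prod_{J \in \J} \tilde q_{i,2^J}(\tilde z_{i,2^J} \mid x),
\]
reducing the ratio bound to a product of per-scale ratios. For each fixed $J \in \J$, the same computation as in the non-adaptive case yields
\[
\frac{\tilde q_{i,2^J}(\tilde z_{i,2^J} \mid x)}{\tilde q_{i,2^J}(\tilde z_{i,2^J} \mid x')} \leq \exp\!\left[\frac{2\sqrt{2}\,\|\varphi_{2^J,k}\|_\infty}{\tilde\sigma_{2^J}}\right],
\]
using that for any given $x \in [0,1]$ exactly one index $k$ satisfies $\varphi_{2^J,k}(x) \neq 0$ (since $\varphi = \un_{[0,1)}$ produces disjoint supports after dilation-translation).

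Finally I would plug in $\|\varphi_{2^J,k}\|_\infty = 2^{J/2}$ and $\tilde\sigma_{2^J} = 2\sqrt{2}\,|\J|\,2^{J/2}/\alpha$ to obtain per-scale exponent $\alpha/|\J|$, and then multiply over $J \in \J$ to get a total exponent equal to $|\J|\cdot \alpha/|\J| = \alpha$, completing the proof. There is no real obstacle here; the only subtle point worth stating explicitly is that independence of the noise blocks across scales $J$ is what allows the factorization, and that the factor $|\J|$ inserted in the definition of $\tilde\sigma_{2^J}$ is exactly what is required to distribute the privacy budget evenly across the $|\J|$ releases so that the composed mechanism remains $\alpha$-private rather than $|\J|\alpha$-private.
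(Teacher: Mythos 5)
Your proposal is correct and follows essentially the same route as the paper's proof: reduce to a density-ratio bound via Lemma~\ref{lem:densityPrivate}, factorize the conditional density of $\tilde Z_i$ over the scales $J \in \J$ (and coordinates $k$) using independence of the Laplace noise, exploit that only one $k$ per scale has $\varphi_{2^J,k}(x)\neq 0$, and sum the per-scale exponents $\alpha/|\J|$ to get $e^\alpha$. The paper simply writes this as one product over $J$ and $k$ rather than phrasing it as a composition of per-scale mechanisms, but the computation is identical.
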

	
	\begin{proof}
		The random vectors $(\tilde Z_{i})_{ 1 \leq i \leq n}$ are i.i.d.~by definition.  Let us denote by $\tilde q_i(\cdot | x_i)$ the density of the vector $\tilde Z_{i}$, conditionally to $X_i=x_i$.   For any $x_i, x'_i$ in $[0,1]$, for any $z_i \in \mathbb{R}^{\sum_{J \in \J} 2^J}$, 
		\begin{align*}
			\frac{\tilde q_i(z_i | x_i)}{\tilde q_i(z_i | x_i')}
			&= \prod_{J \in \J} \prod_{k = 0}^{2^J-1} \exp \Bigg[ \sqrt 2 \frac{\ab{z_{i,k}-\varphi_{2^J,k}(x'_i)}- \ab{z_{i,k}-\varphi_{2^J,k}(x_i)} }{\tilde \sigma_{2^J}} \Bigg]\\
			&\leq \exp\cro{\sum_{J \in \J} \sum_{k = 0}^{2^J-1} \frac{\sqrt 2}{\tilde \sigma_{2^J}} \pa{ \ab{\varphi_{2^J,k}(x'_i)} + \ab{\varphi_{2^J,k}(x_i)}}}.
		\end{align*}
		Since $\varphi_{2^J,k}(x_i) \neq 0$ for a single value of $ k \in  \ac{0, \ldots, 2^J-1}$, we get
		\begin{eqnarray*}
			\frac{\tilde q_i(z_i | x_i)}{\tilde q_i(z_i | x_i')}  &\leq& \exp \cro{2 \sqrt 2 \sum_{J\in \J} \frac{ \| \varphi_{2^J,k} \|_{\infty}}{\tilde \sigma_{2^J}}} \leq e^\alpha,
		\end{eqnarray*}
		by definition of $\tilde \sigma_2^J$, which concludes the proof by application of Lemma~\ref{lem:densityPrivate}. 
	\end{proof}
	Note that $|\J| \leq 1 + 2\log_2(n)$, hence we will have a logarithmic loss for  the separation rates due to the privacy condition for the aggregated procedure. \\
	
	Let us now define the adaptive test. We set, for all $J \in \J$,
	\begin{equation} \label{stattestadapt}
		\widetilde{T}_{J}=\frac1{n(n-1)}\sum_{ i \neq l = 1}^n  \sum_{k = 0}^{2^J-1} \pa{ \tilde Z_{i,2^J,k} - \alpha^0_{2^J,k}} \pa{ \tilde Z_{l,2^J,k} - \alpha^0_{2^J,k}}.
	\end{equation}

	For a given  level $\gamma \in (0,1)$,  the aggregated testing procedure  rejects the hypothesis $H_{0}: f = f_0$ if
	$$ \exists J \in \J, \  \tilde {T}_J  > \tilde t^{0}_J(1-u_\gamma),  $$
	where $u_\gamma$ is defined by
	\begin{equation} \label{ugamma}
		u_\gamma = \sup \Big\{ u \in (0,1), \mathbb{P}_{Q^n_{f_0}} \pa{\sup_{J \in \J} \pa{\tilde {T}_J  - \tilde t^{0}_J(1-u_\gamma)} >0} \leq \gamma \Big\}
	\end{equation}
	and $\tilde t^{0}_J(1-u_\gamma)$ denotes the $ 1-u_\gamma$ quantile of $  \tilde {T}_J$ under $ H_0$. 
	Hence $ u_\gamma$ is the least conservative choice leading to a $\gamma$-level test. We easily notice that $ u_\gamma \geq \gamma/ | \J |$. Indeed, 
	\begin{align*}
		\mathbb{P}_{Q^n_{f_0}} \pa{\sup_{J \in \J} \pa{\widetilde{T}_J  - \tilde t^{0}_J(1-\gamma/ | \J |)} >0}  &\leq  \sum _{J \in \J} \mathbb{P}_{Q^n_{f_0}}   \pa{\widetilde{T}_J  > \tilde t^{0}_J(1-\gamma/ | \J |)} \\
		& \leq  \sum _{J \in \J} \gamma/ | \J | \leq   \gamma.
	\end{align*}
	Let us now consider the second kind error for the aggregated test,  which is the probability to accept the null hypothesis $H_0$, although the alternative hypothesis $H_1$ holds.  This quantity is upper bounded by the smallest second kind error of the tests of the collection, at the price that $ \gamma$ has been replaced by $ u_\gamma$. Indeed, 
	\begin{align}
		\mathbb{P}_{Q^n_{f}} \pa{\sup_{J \in \J} \pa{ \tilde {T}_J  - \tilde t^{0}_J(1-u_{\gamma})} \leq 0} &=  \mathbb{P}_{Q^n_{f}}\pa{\cap_{J \in \J} \pa{\tilde {T}_J  \leq  \tilde t^{0}_J(1-u_\gamma)}} \nonumber \\
		& \leq  \inf_{J \in \J} \mathbb{P}_{Q^n_{f}} \pa{\tilde {T}_J  \leq \tilde t^{0}_J(1-u_\gamma)}. \label{bornetestagreg}
	\end{align}
	We obtain the following theorem for the aggregated procedure. 
	\begin{theorem} \label{bornesupadapt}
		Let $(X_1, \ldots, X_n)$ be i.i.d.~with common density $f$ in $\mathbb{L}_2([0,1])$.  Let $f_0$ be some given density in  $\mathbb{L}_2([0,1])$.  From the  observation of the random vectors  $(\tilde{Z}_i, 1 \leq i \leq n )$ defined in Lemma \ref{privacy-adapt} for a given $\alpha >0$, we want to test the hypotheses 
		\begin{equation*}
			H_{0}: f = f_0,~~~~~ \textrm{versus}~~~~~ H_{1}:   f \neq f_0. 
		\end{equation*}
		We assume that $n\alpha^2/\log^{5/2}(n) \geq 1$. 
		
		We consider the set $ \J= \ac{J \in \mathbb{N}, 2^J \leq n^2}$ and the aggregated test
		$$ \Delta_{\gamma,Q}^{\J}  = \un_{\{ \sup_{J \in \J} \pa{\widetilde{T}_J  - \tilde t^{0}_J(1-u_\gamma)} >0 \}}$$
		where $\widetilde{T}_J$ is defined by Equation~\eqref{stattestadapt}   and $u_\gamma$ by Equation~\eqref{ugamma}. \\
		
		The uniform separation rate, defined by Equation~\eqref{seprateprivate},  of the test $ \Delta_{\gamma,Q}^{\J} $ over the set  $  \mathcal{B}_{s,2, \infty}(R)$ defined by Equation~\eqref{BesovLinfty}  satisfies  for all $n \in \mathbb{N}^*$, $s>0$, $R >0,$ $\alpha > 0$, $(\gamma, \beta)  \in (0,1)^2$ such that $ \gamma+ \beta <1$,
		\begin{align*}
			&\rho_n \left(\Delta_{\gamma,Q}^{\J}  ,  \mathcal{B}_{s,2, \infty}(R)  , \beta, f_0 \right) \\
			&\quad\quad\quad\quad\quad\leq C(\|f_0\|_2, R, \gamma,\beta) \Big[ ({n} \alpha^2/\log^{5/2}(n))^{-2s/(4s+3)}   \vee  (n /\sqrt{\log(n)})^{-2s/(4s+1)} \Big], 
		\end{align*}
	\end{theorem}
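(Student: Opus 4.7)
The plan is to reduce the analysis of the aggregated test to a single-test bound for a carefully chosen level $J^\star \in \J$, and then to optimize the bias--variance trade-off as in the proof of Theorem~\ref{bornesup}, keeping careful track of the logarithmic factors introduced by the adaptive mechanism. First, by Equation~\eqref{bornetestagreg},
\[
\mathbb{P}_{Q^n_f}\!\left(\Delta_{\gamma,Q}^\J = 0\right) \leq \inf_{J \in \J} \mathbb{P}_{Q^n_f}\!\left(\widetilde{T}_J \leq \tilde t^{0}_J(1-u_\gamma)\right),
\]
so it suffices to exhibit a single $J^\star$ for which this single-test error is at most $\beta$. Using $u_\gamma \geq \gamma/|\J|$ and monotonicity of quantiles, $\tilde t^{0}_J(1-u_\gamma) \leq \tilde t^{0}_J(1-\gamma/|\J|)$, so it is enough to apply the machinery of Theorem~\ref{majogene} and Corollary~\ref{cor:uppergene} to the test built from $(\tilde Z_{i,2^{J^\star}})_{1\leq i \leq n}$ at level $\gamma/|\J|$.

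Next I would repeat verbatim the variance analysis leading to Equation~\eqref{maj:var}, with the sole change that $\sigma_L$ becomes $\tilde\sigma_{2^J}=2\sqrt{2}\,|\J|\,2^{J/2}/\alpha$, inflating the Laplace contribution by a factor $|\J|^4$. Combined with the extra $\sqrt{|\J|}$ coming from Chebyshev at level $\gamma/|\J|$, the adapted version of Corollary~\ref{cor:uppergene} yields the sufficient condition
\[
\|\Pi_{S_{2^J}}(f-f_0)\|_2^2 \geq C(\gamma,\beta)\sqrt{|\J|}\,\frac{(\|f\|_2+\|f_0\|_2+|\J|^2\,2^J/\alpha^2)\sqrt{2^J}}{n}.
\]
Invoking the Besov assumption $f-f_0 \in \widetilde B_{s,2,\infty}(R)$ together with the Haar orthogonality identity $\|(I-\Pi_{S_{2^J}})(f-f_0)\|_2^2 = \sum_{j\geq J}\sum_{k\in\Lambda(j)}\beta_{j,k}^2(f-f_0) \leq C(s) R^2\, 2^{-2Js}$, and bounding $\|f\|_2 \leq \|f_0\|_2+\|f-f_0\|_2$ which can be absorbed since we may restrict attention to $\|f-f_0\|_2$ bounded, this is implied by
\[
\|f-f_0\|_2^2 \geq C(s,R)\, 2^{-2Js} + C(\gamma,\beta,\|f_0\|_2)\!\left[\sqrt{|\J|}\,\frac{\sqrt{2^J}}{n} + |\J|^{5/2}\,\frac{2^{3J/2}}{n\alpha^2}\right].
\]

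Finally, I would choose $J^\star$ to balance the bias $2^{-2Js}$ against whichever of the two variance terms dominates, which gives $2^{J^\star}\asymp (n/\sqrt{|\J|})^{2/(4s+1)} \wedge (n\alpha^2/|\J|^{5/2})^{2/(4s+3)}$ and hence the announced separation distance $(n/\sqrt{\log n})^{-2s/(4s+1)} \vee (n\alpha^2/\log^{5/2} n)^{-2s/(4s+3)}$, after noting $|\J| \leq 1+2\log_2 n$. A short check confirms $2^{J^\star}\leq n^2$ so $J^\star \in \J$, using the assumption $n\alpha^2/\log^{5/2}(n)\geq 1$ and $s>0$. The main obstacle I foresee is bookkeeping of the $|\J|=O(\log n)$ factors so that the exponents $5/2$ and $1/2$ come out exactly: the $5/2$ in the private regime must arise from combining the $\sqrt{|\J|}$ of the level replacement with the $|\J|^2$ of the inflated Laplace variance, while the $1/2$ in the non-private regime comes solely from the level replacement; any sloppiness in either step would degrade the rate.
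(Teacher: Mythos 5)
Your proposal is correct and follows essentially the same route as the paper's proof: reduce via Equation~\eqref{bornetestagreg} and $u_\gamma \geq \gamma/|\J|$ to a single test at level $\gamma/|\J|$, apply Corollary~\ref{cor:uppergene} with $\sigma_L$ replaced by $\tilde\sigma_{2^J}$, bound the bias by the Besov assumption, and optimize $J$, with the same $\log^{5/2}(n)$ and $\log^{1/2}(n)$ bookkeeping. The only cosmetic difference is your justification for controlling $\|f\|_2$; the paper simply notes that $f-f_0\in\widetilde B_{s,2,\infty}(R)$ already forces $\|f\|_2\leq \|f_0\|_2+C(s,R)$, which is cleaner than restricting attention to bounded $\|f-f_0\|_2$.
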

	The proof of this result is in Section~\ref{sec:proofAdapt} of the appendix.
	We compare this result with the rates obtained in Theorem~\ref{bornesup}, which has been proved to be optimal. Here, we incur a logarithmic loss due to the adaptation. We recall that in the usual setting, the separation rates obtained by \cite{ingster2000} and \cite{FL06} for adaptive procedures over Besov balls was $ \pa{n/ {\sqrt{\log\log(n)}}}^{-2s/(4s+1)}$. This result was proved to be optimal for adaptive tests in \cite{ingster2000}. In their paper, the log-log term is obtained from exponential inequalities for U-statistics involved in the testing procedure under the null hypothesis. In our setting, obtaining exponential inequalities is not trivial due to the Laplace noise. That is why our logarithmic loss originates from a simple upper bound on the variance of our test statistic under the null. The optimality of the adaptive rates presented in Theorem~\ref{bornesupadapt} remains an open question.

	\section{Discussion}
	\label{sec:discussion}

	Our study of minimax testing rates is in line with Ingster's work and we focus on separation rates in $\mathbb L_2$-norm for goodness-of-fit testing under local differential privacy. We construct a unified setting in order to tackle both discrete and continuous distributions. In the continuous case, we provide the first minimax optimal test and local differentially private channel for the problem of goodness-of-fit testing over Besov balls. This result also holds for multinomial distributions. Besides, in the continuous case, the test and channel remain optimal up to a log factor even if the smoothness parameter is unknown. Among our technical contributions, it is to note that we use a proof technique in the lower bound that could not involve Theorem 1 from \cite{duchi2013localcomplete}. The minimax separation rate turns out to suffer from a polynomial degradation in the private case. However, we point out an elbow effect, where the rate is the same as in the non-private case up to some constant factor if $\alpha$ is large enough.
	Simultaneously and independently, \cite{berrett2020locally} present minimax testing rates for the $l_1$ and $l_2$ norms in the discrete case. We define Besov balls using Haar wavelets, which are equivalent to Besov balls defined using moduli of smoothness when $s < 1$. In order for the equivalence to hold for any $s$, it is possible to define Besov balls using Daubechies wavelets instead. In the proof of our lower bound, we use the disjoint support property of the Haar wavelets, but this can be circumvented taking fewer wavelets in the definition of the prior distributions. A more critical assumption is that $\varphi_{L,k}^2 = \sqrt L \varphi_{L,k}$.
	Future possible works could extend our results to larger Besov classes and study the optimality of the adaptive procedure. \co{Our bounds match when $f_0$ is nearly uniform, and matching bounds for any $f_0$ remain to be proved under non-interactive differential privacy. Finally, finding an interactive privacy channel leading to faster rates than our lower bound would prove that interactive privacy can lead to better results than it is possible under non-interactive privacy for goodness-of-fit testing.}

	\begin{appendix}

		\section{Proof of the results}
		\label{sec:proofs}
		\subsection{Lower bound: proof of Lemma~\ref{lem:lowerBoundTV}}
		
		Since $\nu_{\rho}(\mathcal{F}_{\rho}(\mathcal{C})) \geq 1- \delta$, we first note that
		\begin{align*}
			\inf_{\Delta_{\gamma,Q}} \sup_{f \in \mathcal{F}_{\rho}(\mathcal C)} \mathbb{P}_{ Q^n_{f}} \left(  \Delta_{\gamma,Q}(Z_1,\ldots, Z_n) = 0 \right)
			&\geq  \inf_{\Delta_{\gamma,Q}}  \mathbb{P}_{ Q^n_{\nu_{\rho}}} \left(  \Delta_{\gamma,Q}(Z_1,\ldots, Z_n) = 0 \right) - \delta\\
			& = \inf_{\Delta_{\gamma,Q}} (\mathbb{P}_{ Q^n_{f_0}} \left(  \Delta_{\gamma,Q}(Z_1,\ldots, Z_n) = 0 \right) \\
			&\qquad\qquad+  \mathbb{P}_{ Q^n_{\nu_{\rho}}} \left(  \Delta_{\gamma,Q}(Z_1,\ldots, Z_n) = 0 \right) \\
			&\qquad\qquad - \mathbb{P}_{ Q^n_{f_0}} \left(  \Delta_{\gamma,Q}(Z_1,\ldots, Z_n) = 0 \right)) - \delta  \\
			&\geq  1-\gamma-  \sup_{\Delta_{\gamma,Q}} \Bigg| \mathbb{P}_{ Q^n_{\nu_{\rho}}} \left(  \Delta_{\gamma,Q}(Z_1,\ldots, Z_n) = 0 \right)\\
			&\quad\quad\quad\quad\quad\quad-
			\mathbb{P}_{ Q^n_{f_0}} \left(  \Delta_{\gamma,Q}(Z_1,\ldots, Z_n) = 0 \right) \Bigg| - \delta
		\end{align*}
		by definition of $\Delta_{\gamma,Q}(Z_1,\ldots, Z_n)$.
		Finally, by definition of the total variation distance,
		\begin{align*}
			&\inf_{\Delta_{\gamma,Q}} \sup_{f \in \mathcal{F}_{\rho}(\mathcal{C})} \mathbb{P}_{ Q^n_{f}} \left(  \Delta_{\gamma,Q}(Z_1,\ldots, Z_n) = 0 \right)  \geq 1-\gamma - \delta - \|  \mathbb{P}_{ Q^n_{\nu_{\rho}}} - \mathbb{P}_{ Q^n_{f_0}} \|_{TV}.
		\end{align*}
		So we have 
		\begin{align*}
			\inf_{\Delta_{\gamma,Q}} \sup_{f \in \mathcal{F}_{\rho}(\mathcal{C})} &\mathbb{P}_{ Q^n_{f}} \left(  \Delta_{\gamma,Q}(Z_1,\ldots, Z_n) = 0 \right) > \beta,
		\end{align*}
		provided that
		$$
		\|  \mathbb{P}_{ Q^n_{\nu_{\rho}}} - \mathbb{P}_{ Q^n_{f_0}} \|_{TV} < 1-\gamma - \beta - \delta . 
		$$
		
		\subsection{Lower bound: proof of Theorem~\ref{th:lowerBound}}
		\label{sec:proofLB}

		An initial version of this proof has been presented in a preprint of \cite{lam2020minimax}, which was then improved upon by \cite{butucea2020interactive} in order to find the matching rate in $\alpha$ and to account for different channels $Q_i$ for each initial observation $X_i$. The proof remains fundamentally the same, however. In line with the rest of the paper, both the discrete and the continuous cases are treated in one unified setting.
		
		In this section, $f_0 = \un_{[0,1]}$.
		
		\subsubsection{Preliminary results}
		
		The following lemma sheds light on the equivalence between the local differential privacy condition and a similar condition on the density of the channel.
		
		\begin{lemma}
			\label{lem:Qdensity}
			Let $Q\in\mathcal{Q_\alpha}$ be an $\alpha$-private channel and $i \leq n$. Let $X_i$ be a random variable with density $f\in\mathbb L_2([0,1])$ with respect to the Lebesgue measure. Then there exists a probability measure with respect to which $Q_i(\cdot|x)$ is absolutely continuous for any $x \in [0,1]$.
		\end{lemma}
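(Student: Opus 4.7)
The natural candidate for a dominating probability measure is the marginal law of $Z_i$ when $X_i$ is assigned a fixed reference prior, so the plan is to construct such a measure explicitly and then show that the non-interactive privacy constraint forces it to dominate every conditional law $Q_i(\cdot \mid x)$, uniformly in $x$.

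First I would set
\[
\mu(S) \;=\; \int_0^1 Q_i(Z_i \in S \mid X_i = x)\, dx, \qquad S \in \mathcal{Z}_i,
\]
which does not even need to involve the specific density $f$ from the statement; the uniform prior suffices. Measurability of $x \mapsto Q_i(Z_i \in S \mid X_i = x)$ is part of the definition of the Markov kernel $Q_i$, and monotone convergence yields countable additivity of $\mu$. Together with $\mu(\tilde\Omega_i) = 1$, this makes $\mu$ a probability measure on $(\tilde\Omega_i, \mathcal{Z}_i)$.

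Next I would check that $Q_i(\cdot \mid x) \ll \mu$ for every $x \in [0,1]$. Fix $S \in \mathcal{Z}_i$ with $\mu(S) = 0$; then the non-negative measurable function $x \mapsto Q_i(Z_i \in S \mid X_i = x)$ has zero Lebesgue integral on $[0,1]$, so it vanishes for Lebesgue-almost every $x$. Picking such an $x_0$, the non-interactive $\alpha$-privacy condition in Equation~\eqref{eq:noninteractif} yields
\[
Q_i(Z_i \in S \mid X_i = x) \;\leq\; e^\alpha\, Q_i(Z_i \in S \mid X_i = x_0) \;=\; 0
\]
for every $x \in [0,1]$, which is precisely the desired absolute continuity.

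The main subtlety I foresee is the interpretation of Equation~\eqref{eq:noninteractif} when the denominator vanishes: taken literally the ratio requires $Q_i(Z_i \in S \mid X_i = x') > 0$, so one must read the privacy bound as implicitly ruling out the pathological case $a/0$ with $a>0$, i.e.\ as enforcing the support-invariance statement $Q_i(\cdot \mid x') \ll Q_i(\cdot \mid x)$ on every atom $\{x,x'\}$. This is exactly the measure-theoretic content of $\alpha$-privacy that Lemma~\ref{lem:densityPrivate} will later make explicit at the density level, and once this reading is adopted the argument above goes through; the present lemma is what justifies speaking of a density $q_{i,L}(\cdot \mid x)$ in the sequel.
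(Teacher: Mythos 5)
Your proof is correct and follows essentially the same route as the paper: define a mixture measure (the paper uses $\mu_i = \int_{[0,1]} Q_i(\cdot|x) f(x)\,dx$ rather than the uniform prior, but the argument is identical), observe that a $\mu$-null set $S$ forces $Q_i(S|x_0)=0$ for some $x_0$, and then use the privacy ratio to conclude $Q_i(S|x)=0$ for all $x$. Your remark on reading Equation~\eqref{eq:noninteractif} as $Q_i(S|x)\leq e^{\alpha} Q_i(S|x')$ when the denominator vanishes makes explicit a step the paper's proof uses implicitly; otherwise the two arguments coincide.
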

		
		\begin{proof}
			Let $\mu_i = \int_{[0,1]} Q_i(\cdot|x) f(x) dx$.
			Let $S \in \mathcal Z_i$ such that $\mu_i(S) = 0$. Then since $Q_i(S|x) \geq 0$ for any $x$, there exists $x$ such that  $Q_i(S|x) = 0$. Now by $\alpha$-local differential privacy, $Q_i(S|x) = 0$ for any $x$.
		\end{proof}
		For the sake of completeness, we prove the following classical inequality between the total variation distance and the chi-squared distance. It will be used in order to reduce the study of the distance between the distributions to that of an expected squared likelihood ratio.
		
		\begin{lemma}
			\label{lem:TVchi2}
			\begin{align*}
				&\|  \mathbb{P}_{ Q^n_{\nu_{\rho}}} - \mathbb{P}_{ Q^n_{f_0}} \|_{TV} \leq \frac12 \pa{ \mathbb{E}_{ Q^n_{f_0}} \cro{L^2_{Q^n_{\nu_{\rho}}}(Z_1, \ldots, Z_n)-1 }}^{1/2},
			\end{align*}
			where
			$
			L_{Q^n_{\nu_{\rho}}}(Z_1, \ldots, Z_n)$ is the likelihood ratio between $Q^n_{\nu_{\rho}}$ and $Q^n_{f_0}$.
		\end{lemma}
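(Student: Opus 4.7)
The plan is to use the standard route that relates total variation to chi-squared divergence via Scheffé's identity followed by Cauchy--Schwarz. First, I would invoke Lemma~\ref{lem:Qdensity} applied to $f_0$ to obtain, for each $i$, a dominating measure $\mu_i$ with respect to which $Q_i(\cdot|x)$ admits a density $q_i(\cdot|x)$ for every $x \in [0,1]$. Setting $\mu = \bigotimes_{i=1}^n \mu_i$, both $\mathbb{P}_{Q^n_{f_0}}$ and $\mathbb{P}_{Q^n_{\nu_\rho}}$ admit densities with respect to $\mu$ on $\prod_i \tilde\Omega_i$, namely
\[
p_0(z_1,\ldots,z_n) = \prod_{i=1}^n \int q_i(z_i|x) f_0(x)\,dx, \qquad p_{\nu_\rho}(z_1,\ldots,z_n) = \int \prod_{i=1}^n \int q_i(z_i|x) g(x)\,dx\, d\nu_\rho(g),
\]
so that $L_{Q^n_{\nu_\rho}}(z_1,\ldots,z_n) = p_{\nu_\rho}(z_1,\ldots,z_n)/p_0(z_1,\ldots,z_n)$ is well defined $\mathbb{P}_{Q^n_{f_0}}$-almost surely.

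Next I would apply Scheffé's identity to rewrite the total variation distance as a half $L^1$-distance between densities, which gives
\[
\| \mathbb{P}_{Q^n_{\nu_\rho}} - \mathbb{P}_{Q^n_{f_0}}\|_{TV} = \tfrac{1}{2}\int |p_{\nu_\rho} - p_0| \, d\mu = \tfrac{1}{2}\int |L_{Q^n_{\nu_\rho}} - 1|\, p_0\, d\mu = \tfrac{1}{2}\,\mathbb{E}_{Q^n_{f_0}}\!\left[|L_{Q^n_{\nu_\rho}} - 1|\right].
\]
Then Cauchy--Schwarz upper bounds this by $\tfrac{1}{2}(\mathbb{E}_{Q^n_{f_0}}[(L_{Q^n_{\nu_\rho}} - 1)^2])^{1/2}$.

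Finally, I would expand the square and use the defining property of the likelihood ratio, namely $\mathbb{E}_{Q^n_{f_0}}[L_{Q^n_{\nu_\rho}}] = 1$, to obtain
\[
\mathbb{E}_{Q^n_{f_0}}\!\left[(L_{Q^n_{\nu_\rho}} - 1)^2\right] = \mathbb{E}_{Q^n_{f_0}}\!\left[L^2_{Q^n_{\nu_\rho}}\right] - 2\,\mathbb{E}_{Q^n_{f_0}}[L_{Q^n_{\nu_\rho}}] + 1 = \mathbb{E}_{Q^n_{f_0}}\!\left[L^2_{Q^n_{\nu_\rho}} - 1\right],
\]
which yields the claimed bound. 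There is no real obstacle here; the only care needed is the existence of a common dominating measure so that the likelihood ratio is well defined, and that is precisely what Lemma~\ref{lem:Qdensity} supplies.
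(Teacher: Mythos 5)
Your proposal is correct and follows essentially the same route as the paper: writing the total variation distance as $\tfrac12\,\mathbb{E}_{Q^n_{f_0}}\bigl[|L_{Q^n_{\nu_\rho}}-1|\bigr]$, applying Cauchy--Schwarz, and using $\mathbb{E}_{Q^n_{f_0}}[L_{Q^n_{\nu_\rho}}]=1$ to reduce $(L-1)^2$ to $L^2-1$. Your additional remark on the dominating measure via Lemma~\ref{lem:Qdensity} is a welcome (and harmless) precision that the paper leaves implicit.
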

		
		\begin{proof}
			We have 
			\begin{align*}
				\|  \mathbb{P}_{ Q^n_{\nu_{\rho}}} - \mathbb{P}_{ Q^n_{f_0}} \|_{TV} = \frac12 \int \ab{ L_{Q^n_{\nu_{\rho}}} -1} d\mathbb{P}_{ Q^n_{f_0}} &=  \frac12  \mathbb{E}_{ Q^n_{f_0}} \cro{\ab{ L_{Q^n_{\nu_{\rho}}}(Z_1, \ldots, Z_n)-1 }}\\
				& \leq  \frac12 \pa{ \mathbb{E}_{ Q^n_{f_0}} \cro{L^2_{Q^n_{\nu_{\rho}}}(Z_1, \ldots, Z_n)-1 }}^{1/2},
			\end{align*}
			by Cauchy-Schwarz inequality and since $ \mathbb{E}_{ Q^n_{f_0}} \pa{ L_{Q^n_{\nu_{\rho}}}(Z_1, \ldots, Z_n)}=1$.
			%
		\end{proof}
		
		The following two lemmas can be interpreted as data processing inequalities. Lemma~\ref{lem:contractionTV} describes the contraction of the total variation distance by a stochastic channel.
		
		\begin{lemma}
			\label{lem:contractionTV}
			Let $\mathbb P_f,\mathbb P_g$ be probability measures over the sample space $[0,1]$ with respective densities $f$ and $g$ with respect to the Lebesgue measure. Let $Q$ be a stochastic channel. Then
			$$
			\| \mathbb P_f - \mathbb P_g \|_{TV} \geq  \| \mathbb P_{Q_f} - \mathbb P_{Q_g} \|_{TV} . 
			$$
			
			
		\end{lemma}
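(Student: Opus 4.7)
The plan is to establish this contraction property using the variational characterization of total variation distance, namely $\|P_1 - P_2\|_{TV} = \sup_B |P_1(B) - P_2(B)|$ where the supremum is taken over measurable sets in the relevant sample space. First I would fix a measurable set $B$ in the image space of $Q$ and rewrite the difference of pushforward probabilities as an integral against the signed measure with density $f - g$:
\[ \mathbb P_{Q_f}(B) - \mathbb P_{Q_g}(B) = \int_{[0,1]} Q(B|x)\bigl(f(x) - g(x)\bigr)\, dx. \]
The key structural observation is that $x \mapsto Q(B|x)$ is a measurable function taking values in $[0,1]$, which follows from the very definition of a stochastic channel.

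Next I would split $[0,1]$ into $A = \{x : f(x) \geq g(x)\}$ and its complement $A^c$. On $A$ the integrand is nonnegative and bounded above by $f(x) - g(x)$, while on $A^c$ it is nonpositive and bounded below by $f(x) - g(x)$. Since $f$ and $g$ are both densities of probability measures,
\[ \int_A (f - g)\, dx = \int_{A^c} (g - f)\, dx = \tfrac{1}{2}\int_{[0,1]} |f - g|\, dx, \]
so both one-sided estimates yield
\[ \bigl|\mathbb P_{Q_f}(B) - \mathbb P_{Q_g}(B)\bigr| \leq \tfrac{1}{2}\int_{[0,1]} |f - g|\, dx = \|\mathbb P_f - \mathbb P_g\|_{TV}. \]
Taking the supremum over $B$ then gives the desired inequality.

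There is no substantive obstacle here: the lemma is the classical data processing inequality for total variation (contraction of $L^1$-type distances by stochastic kernels), and the only care required is the measurability of $x \mapsto Q(B|x)$. An equivalent formulation would replace the two-piece bound above by the Jordan decomposition of the signed measure $\mathbb P_f - \mathbb P_g$, but the argument sketched is self-contained and avoids introducing extra machinery.
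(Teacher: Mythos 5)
Your proof is correct and follows essentially the same route as the paper's: both arguments write $\mathbb P_{Q_f}(B)-\mathbb P_{Q_g}(B)=\int_{[0,1]}Q(B|x)(f(x)-g(x))\,dx$, split the integral according to the sign of $f-g$, and use $0\leq Q(B|x)\leq 1$ to bound each piece before taking the supremum over $B$. The only cosmetic difference is that you conclude via the identity $\tfrac12\int|f-g|=\|\mathbb P_f-\mathbb P_g\|_{TV}$, while the paper ends with $\sup_A\left|\int_A(f-g)\right|$; these are equivalent.
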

		
		%
		\begin{proof}
			For any measurable set $S$,
			\begin{align*}
				\int_{[0,1]} Q(S|x) (f(x) - g(x)) dx &= \int_{[0,1]} Q(S|x) (f(x) - g(x))\un_{\{f-g \geq 0\}}(x) dx \\
				&+ \int_{[0,1]} Q(S|x) (f(x) - g(x))\un_{\{f-g < 0\}}(x) dx.
			\end{align*}
			Now, since $0\leq Q(S|x) \leq 1$ for any measurable set $S$ and $x\in [0,1]$,
			\begin{align*}
				0 \leq \int_{[0,1]} Q(S|x) (f(x) - g(x))\un_{\{f-g \geq 0\}}(x) dx
				\leq \int_{[0,1]} (f(x) - g(x))\un_{\{f-g > 0\}}(x) dx.
			\end{align*}
			and
			\begin{align*}
				0 \geq \int_{[0,1]} Q(S|x) (f(x) - g(x))\un_{\{f-g < 0\}}(x) dx \geq \int_{[0,1]}  (f(x) - g(x)) \un_{\{f-g < 0\}}(x) dx.
			\end{align*}
			So for any measurable set $S$,
			\begin{align*}
				&\int_{[0,1]}  (f(x) - g(x)) \un_{\{f-g < 0\}}(x) dx\\
				&\quad\quad\quad\quad\quad\quad\quad\quad\quad\quad\leq \int_{[0,1]} Q(S|x) (f(x) - g(x)) dx \\
				&\quad\quad\quad\quad\quad\quad\quad\quad\quad\quad\quad\quad\quad\quad\quad\quad\quad\quad\leq \int_{[0,1]} (f(x) - g(x))\un_{\{f-g > 0\}}(x) dx.
			\end{align*}
			That is, for any measurable set $S$
			\begin{align*}
				\left|\int_{[0,1]} Q(S|x) (f(x) - g(x)) dx\right| &\leq \left|\int_{[0,1]} (f(x) - g(x))\un_{\{f-g > 0\}}(x) dx\right|\\
				& \quad\vee \left|\int_{[0,1]}  (f(x) - g(x)) \un_{\{f-g < 0\}}(x) dx\right|\\
				&= \sup_A \left| \int_A (f(x) - g(x))  dx\right| =\| \mathbb P_f - \mathbb P_g \|_{TV}.
			\end{align*}
		\end{proof}
		
		\subsubsection{Definition of prior distributions}
		\label{sec:defPriorLB}
		
		By Lemma~\ref{lem:Qdensity}, let $Q\in\mathcal{Q_\alpha}$ be a non-interactive $\alpha$-private channel with marginal conditional densities $q_i(z_i|x_i)$ with respect to probability measure $\mu_i$ over the respective sample space $\tilde \Omega_i$ for any $1\leq i \leq n$. In the discrete case, we assume that $p^0$ is a uniform probability vector. By Equation~\eqref{eq:norm2ContDisc}, we can consider the associated uniform density on $[0,1]$. So in both the continuous and the discrete cases, we end up considering a uniform density $f_0$ over $[0,1]$.
		Let $\tilde f_{0,i}(z_i) = \int_0^1 q_i(z_i|x) f_0(x)dx = \int_0^1 q_i(z_i|x)dx$ with the convention $0/0=0$. 
		Let $\mu_i = \int_{[0,1]} Q_i(\cdot|x) f_0(x) dx$ and $K_i: \mathbb L_2([0,1]) \rightarrow \mathbb L_2(\tilde \Omega_i, d\mu_i)$ such that
		$$
		K_i f = \int_0^1 q_i(\cdot | x) f(x) \frac{dx}{\sqrt {\tilde f_{0,i}(\cdot)}}.
		$$
		Let $K_i^*$ denote the adjoint of $K_i$. Then $K_i^*K_i$ is a symmetric integral operator with kernel 
		\begin{equation}
			\label{eq:kerneli}
			F_i(x,y) = \int \frac{q_i(z_i|x) q_i(z_i|y)}{\tilde f_{0,i}(z_i)} d\mu_i(z_i).
		\end{equation}
		And by Fubini's theorem, for any $f \in \mathbb L_2([0,1])$:
		$$
		K_i^* K_i f(\cdot) = \int_0^1 F_i(\cdot,y) f(y) dy.
		$$
		Note that $f_0$ is an eigenfunction of $K_i^* K_i$ associated to the eigenvalue $\lambda_{0,i} = 1$ for all $1 \leq i \leq n.$
		Let
		$$
		K = \sum_{i=1}^n K_i^*K_i / n,
		$$
		which is symmetric and positive semidefinite, and $\lambda_{0} = 1$ is an eigenvalue associated with $f_0$.
		It is an integral operator with kernel
		$$
		F(x,y) = \sum_i F_i(x,y) / n.
		$$	
		We denote by $\psi$ the difference of indicator functions: $ \psi = \un_{[0,1/2)} - \un_{[1/2,1)} $
		and for all integer $L \geq 1$, we set, for all $k \in \ac{0, \ldots, L-1}$, for all $x \in [0,1)$, 
		$$ \psi_{k}(x) = \sqrt{L} \psi(Lx-k).$$
		The integer $L$ will be taken as $L= 2^J$ for some $J\geq0$ in the continuous case, and we choose $L=d/2$ in the discrete case (we assume that $d$ is even).
		We denote by $V$ the linear subspace of $\mathbb{L}^2([0,1])$ generated by the functions $(f_0,\psi_{k}, k \in \ac{0, 1, \ldots L-1})$.
		Then we complete $(f_0)$ into an orthogonal basis $(f_0, u_i)_{1 \leq i \leq L}$ of $V$ with eigenfunctions of $K$ such that $\int u_i(x) dx = 0$ by orthogonality with $f_0$ and $\|u_i\|_2 = 1$. We write the corresponding eigenvalues $\lambda_i$.
		
		Let $z_\alpha = e^{2\alpha} - e^{-2\alpha} \leq 2$ for any $\alpha \in (0,1]$.
		Let $\tilde \lambda_k = (\lambda_k / z_\alpha^2) \vee L^{-1} \geq L^{-1}$.
		Let
		$$
		f_\eta (x) = f_0(x) + \varepsilon \sum_{j=1}^L \eta_j \tilde \lambda_j^{-1/2} u_j(x),
		$$
		where $\eta \in \{-1,1\}^{L}$.
		For all $ i \in \ac{1, \ldots,L}$, $u_i \in \mbox{Span}(\psi_{k}, k \in \{0,1,\ldots,L-1\})$, hence we write
		$$ u_i= \sum_{k = 0}^{L-1} a_{i,k} \psi_{k}.$$	
		Then
		$$
		f_\eta (x) = f_0(x) + \varepsilon \sum_{j=1}^L \sum_{k=0}^{L-1} \eta_j a_{j,k}\tilde \lambda_j^{-1/2}\psi_{k}(x).
		$$
		We define $\nu_\rho$ as the uniform probability measure over $\{f_\eta: \eta \in \{-1,1\}^{L}\}$.
		Now, we can identify the distance between $f_\eta$ and $f_0$. Let $l = \sum_{i=1}^L \un_{\{z_\alpha^{-2} \lambda_i > L^{-1}\}}$. By definition and orthonormality of $(u_i)_{1\leq i \leq L}$, for any $\eta \in \{-1,1\}^{L}$
		\begin{align}
			\label{eq:distff0}
			\|f_\eta - f_0\|_2 &= \varepsilon \sqrt{\sum_{i=1}^{L} \tilde \lambda_i^{-1}\|u_i\|_2^2} = \varepsilon \sqrt{\sum_{i=1}^{L} \tilde \lambda_i^{-1}} \nonumber\\
			&= \varepsilon \sqrt{\sum z_\alpha^2 \lambda_i^{-1} \un_{\{z_\alpha^{-2} \lambda_i > L^{-1}\}} + L \sum \un_{\{z_\alpha^{-2} \lambda_i \leq L^{-1}\}}}\nonumber\\
			&\geq \varepsilon \sqrt{z_\alpha^2 l^2 (\sum_i \lambda_i \un_{\{z_\alpha^{-2} \lambda_i > L^{-1}\}})^{-1}  + L(L- l)},
		\end{align}
		by Cauchy-Schwarz inequality.
		
		So let us provide guarantees on the singular values in order to determine sufficient conditions for $\varepsilon$ to lead to a lower bound on $\rho^*_n \left(  \Delta_{\gamma,Q}, \mathcal{C}, \beta, f_0 \right)$, depending on $\mathcal{C}$.
		
		\subsubsection{Obtaining the inequalities on the eigenvalues}
		
		\begin{lemma}
			\label{lem:ineqEigen}
			Let $K$ be defined as in Section~\ref{sec:defPriorLB} and $(\lambda_i^2)_{0 \leq i \leq L}$ its eigenvalues associated with the orthonormal basis $(f_0, u_i)_{1 \leq i \leq L}$. Then the following inequality holds.
			$$
			\sum_{k=1}^L \lambda_k \leq z_\alpha^2.
			$$
		\end{lemma}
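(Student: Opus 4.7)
The plan is to exploit the invariance of the trace under change of basis. Since the subspace $V$ is spanned by $f_0$ and the Haar-type functions $(\psi_k)_{0\leq k\leq L-1}$, and since these functions are orthonormal (disjoint supports across $k$ with $\|\psi_k\|_2 = 1$, and $\int \psi_k = 0$ providing orthogonality with $f_0$), the family $(f_0, \psi_0, \ldots, \psi_{L-1})$ is an orthonormal basis of $V$. Because $V$ is spanned by the eigenfunctions $(f_0, u_1, \ldots, u_L)$ of $K$, it is invariant under $K$ and its restriction has well-defined trace. Computing $\operatorname{Tr}(K|_V)$ in both bases yields
\[
1 + \sum_{k=1}^L \lambda_k \;=\; \langle K f_0, f_0\rangle + \sum_{k=0}^{L-1} \langle K\psi_k, \psi_k\rangle \;=\; 1 + \sum_{k=0}^{L-1} \langle K\psi_k, \psi_k\rangle,
\]
since $f_0$ is an eigenfunction with eigenvalue $1$. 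Hence the target sum equals $\sum_{k=0}^{L-1}\langle K\psi_k,\psi_k\rangle = \frac{1}{n}\sum_i \sum_k \|K_i \psi_k\|_2^2$.

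Next I would expand each $\|K_i \psi_k\|_2^2$ using the definition of $K_i$, and critically use the fact that $\int \psi_k = 0$ to center: replace $q_i(z|x)$ by $q_i(z|x) - \tilde f_{0,i}(z)$ inside the integral against $\psi_k$. This gives
\[
\|K_i \psi_k\|_2^2 \;=\; \int \Big(\int \frac{q_i(z|x) - \tilde f_{0,i}(z)}{\sqrt{\tilde f_{0,i}(z)}}\,\psi_k(x)\,dx\Big)^2 d\mu_i(z).
\]
Since $(\psi_k)_{0\leq k \leq L-1}$ is an orthonormal system in $\mathbb L_2([0,1])$, Bessel's inequality applied pointwise in $z$ bounds $\sum_k$ of the squared scalar products by $\int \frac{(q_i(z|x)-\tilde f_{0,i}(z))^2}{\tilde f_{0,i}(z)}\,dx$.

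The final ingredient is the local differential privacy constraint, expressed as $q_i(z|x)/\tilde f_{0,i}(z) \in [e^{-\alpha}, e^\alpha]$ (which follows from averaging the LDP inequality against the uniform density $f_0$, or equivalently from Lemma~\ref{lem:densityPrivate}). This gives the pointwise bound $(q_i(z|x)-\tilde f_{0,i}(z))^2 \leq (e^\alpha-1)^2 \tilde f_{0,i}(z)^2$, so that the inner integral is at most $(e^\alpha-1)^2 \tilde f_{0,i}(z)$. Integrating against $\mu_i$ and using $\int \tilde f_{0,i}\, d\mu_i = 1$ (total mass of the conditional density $q_i(\cdot|x)$ integrated over $x$, by Fubini) yields $\sum_k \|K_i\psi_k\|_2^2 \leq (e^\alpha-1)^2$ for every $i$. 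Averaging over $i$ and noting that $(e^\alpha-1)^2 \leq (e^{2\alpha}-e^{-2\alpha})^2 = z_\alpha^2$ for $\alpha \geq 0$ (a one-line comparison) gives the stated inequality.

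The main subtle point is the choice of basis: using the unspecified eigenfunctions $(u_j)$ does not yield a clean pointwise bound, whereas $(\psi_k)$ has the two crucial properties $\int \psi_k = 0$ (allowing the centering step that turns $q_i$ into the LDP-controlled difference $q_i - \tilde f_{0,i}$) and orthonormality in $\mathbb L_2([0,1])$ (enabling Bessel). Everything else is a routine combination of Bessel's inequality, the LDP ratio bound, and Fubini.
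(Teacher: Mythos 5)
Your proof is correct. It rests on the same three ingredients as the paper's argument -- centering inside the scalar product thanks to mean-zero test functions, Bessel's inequality, the privacy ratio bound $q_i(z|x)/\tilde f_{0,i}(z)\in[e^{-\alpha},e^{\alpha}]$ from Lemma~\ref{lem:densityPrivate}, and $\int \tilde f_{0,i}\,d\mu_i=1$ -- but it reaches them by a slightly different route. The paper does not pass through the trace identity and the Haar family $(\psi_k)$: it writes $\sum_k\lambda_k=\frac1n\sum_i\int\sum_k\langle q_i(z|\cdot)/\tilde f_{0,i}(z),u_k\rangle^2\,\tilde f_{0,i}(z)\,d\mu_i(z)$ and applies Bessel directly with the eigenfunctions $(u_k)$, after centering the ratio at $e^{-2\alpha}$; the only properties of $(u_k)$ it needs are orthonormality and $\int u_k=0$, and the pointwise bound concerns only the ratio function, not the basis. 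So your closing remark that the unspecified $(u_j)$ ``do not yield a clean pointwise bound'' is not quite accurate -- they do, and that is the shorter path. Your detour through the basis-independence of the trace of the compression of $K$ to $V$ is legitimate (both $(f_0,u_1,\dots,u_L)$ and $(f_0,\psi_0,\dots,\psi_{L-1})$ are orthonormal bases of $V$, and $\langle Kf_0,f_0\rangle$ cancels on both sides, so you do not even need $\lambda_0=1$), and it buys you something: centering at $\tilde f_{0,i}(z)$ rather than at $e^{-2\alpha}\tilde f_{0,i}(z)$ gives the slightly sharper constant $(e^{\alpha}-1)^2\le z_\alpha^2$, at the cost of one extra structural step. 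Either version suffices for the lemma as stated.
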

		
		\begin{proof}
			We have
			\begin{align*}
				\sum_{k=1}^{L} \lambda_k &= \sum_{k = 1}^{L} \int_0^1 \int_0^1 \frac{u_k(x) u_k(y)}{n} \sum_{i = 1}^n F_i(x,y)dx dy\\
				&=\frac 1n \sum_{i = 1}^n \int_{\tilde \Omega_i} \sum_{k=1}^{L} \pa{\int_0^1  \frac{q_i(z_i|x)}{\tilde f_{0,i}(z_i)}u_k(x) dx}^2 \tilde f_{0,i}(z_i) d\mu_i(z_i)\\
				&=\frac 1n \sum_{i = 1}^n \int_{\tilde \Omega_i} \sum_{k=1}^{L} \pa{\int_0^1  \pa{\frac{q_i(z_i|x)}{\tilde f_{0,i}(z_i)} - e^{-2\alpha}}u_k(x) dx}^2 \tilde f_{0,i}(z_i) d\mu_i(z_i),
			\end{align*}
			since $\int u_k(x)dx = 0$.
			Now we define $f_{z,i}(x) = \frac{q_i(z_i|x)}{\tilde f_{0,i}(z_i)} - e^{-2\alpha}$ and by Lemma~\ref{lem:densityPrivate},
			\begin{align*}
				0 \leq e^{-\alpha} - e^{-2\alpha} \leq f_{z,i}(x) = \pa{\int_0^1 \frac{q_i(z_i|s)}{q_i(z_i|x)} ds}^{-1} - e^{-2\alpha} \leq e^{\alpha} - e^{-2\alpha} \leq e^{2\alpha} - e^{-2\alpha}.
			\end{align*}
			So $\|f_{z,i}\|_2 \leq e^{2\alpha} - e^{-2\alpha}$.
			Then, by orthonormality of the $u_k$'s, we apply Parseval's inequality:
			\begin{align*}
				\sum_{k=1}^{L} \pa{\int_0^1  \pa{\frac{q_i(z_i|x)}{\tilde f_{0,i}(z_i)} - e^{-2\alpha}}u_k(x) dx}^2 &= \sum_{k=1}^{L} \langle {f_{z,i}, u_k} \rangle^2 = \|\sum_{k=1}^L \langle f_{z,i}, u_k \rangle u_k\|_2^2\leq \|f_{z,i}\|_2^2 \leq z_\alpha^2.
			\end{align*}
			Finally, $
			\int_{\tilde \Omega_i} \tilde f_{0,i}(z_i) d\mu_i(z_i) = 1
			$ leads to $\sum \lambda_k \leq z_\alpha^2$.
			
		\end{proof}
		Then from Equation~\eqref{eq:distff0} and by application of Lemma~\ref{lem:ineqEigen},
		\begin{equation}
			\label{eq:fL2Epsilon}
			\|f_\eta - f_0\|_2 \geq L \varepsilon \sqrt{(L^{-1} l)^2 + 1 - (L^{-1} l)} \geq L \varepsilon \sqrt{3/4}.
		\end{equation}
		So for the discrete case, by Equation~\eqref{eq:norm2ContDisc},
		\begin{equation}
			\label{eq:pL2Epsilon}
			\sqrt{\sum_{i=0}^{d-1} (p_{i} -  p^0_{i})^2} \geq \sqrt d \varepsilon \sqrt{3/4}.
		\end{equation}

		\subsubsection{Information bound}
		
		Let $\varepsilon > 0$,  for all $\eta \in \ac{-1,1}^{L}$, we define
		$$
		\tilde f_{\eta,i}(z) = \tilde f_{0,i}(z) + \varepsilon \sum_{j=1}^{L}  \eta_j \tilde \lambda_j^{-1/2} \int_0^1 q_i(z_i|x) u_j(x) dx.
		$$		
		We consider the expected squared likelihood ratio:
		\begin{align*}
			\mathbb{E}_{Q^n_{f_0}} \cro{  L_{Q^n_{\nu_{\rho}}}^2(Z_1, \ldots, Z_n)}
			&= \mathbb{E}_{Q^n_{f_0}} \E_{\eta,\eta'}  \prod_{i=1}^n \left(1 + \varepsilon \frac{\sum_{j=1}^{L} \tilde \lambda_j^{-1/2} \eta_j \int_0^1 q_i(Z_i|x) u_j(x) dx}{\tilde f_0(Z_i)}\right)\\
			&\quad\quad\quad\quad\quad\quad\left(1 + \varepsilon \frac{\sum_{j=1}^{L} \tilde \lambda_j^{-1/2} \eta'_j \int_0^1 q_i(Z_i|x) u_j(x) dx}{\tilde f_0(Z_i)}\right) \\
			&= \mathbb{E}_{Q^n_{f_0}}  \E_{\eta,\eta'}  \prod_{i=1}^n \Bigg(1 + \frac{ \varepsilon \sum_{j=1}^{L} \tilde \lambda_j^{-1/2} \eta_j \int_0^1 q_i(Z_i|x) u_j(x) dx}{\tilde f_0(Z_i)} \\
			&\quad\quad\quad\quad\quad\quad+ \frac{ \varepsilon \sum_{j=1}^{L} \tilde \lambda_j^{-1/2} \eta'_j \int_0^1 q_i(Z_i|x) u_j(x) dx}{\tilde f_0(Z_i)} \\
			& + \frac{\varepsilon^2 \sum_{j,l=1}^{L} \tilde \lambda_j^{-1/2} \tilde \lambda_l^{-1/2} \eta_j\eta'_l \int_0^1 q_i(Z_i|x) u_j(x) dx \int_0^1 q_i(Z_i|y) u_l(y) dy}{\tilde f_0(Z_i)^2} \Bigg).
		\end{align*}
		Now, for any $j$, 
		\begin{align*}
			\E_{Q_{f_0}} \cro{\frac{\int_0^1 q_i(Z_i|x) u_j(x) dx}{\tilde f_0(Z_i)}} 		=  \int_0^1 \int_{\tilde \Omega_i} q_i(z|x) d\mu_i(z) u_j(x) dx 
			=\int_0^1 u_j(x) dx= 0,
		\end{align*}
		by orthogonality with uniform vector $f_0$.
		
		And, by Equation~\eqref{eq:kerneli},
		\begin{align*}
			\E_{Q_{f_0}} \cro{\frac{\int_0^1 q_i(Z_i|x) u_j(x) dx \int_0^1 q_i(Z_i|y) u_l(y) dy}{\tilde f_0(Z_i)^2}} = \int_0^1 \int_0^1 F_i(x,y) u_j(x) u_l(y) dx dy.
		\end{align*}
		So since $1+u \leq \exp u$ for any $u$,
		\begin{align*}
			\mathbb{E}_{Q^n_{f_0}} &\cro{  L_{Q^n_{\nu_{\rho}}}^2(Z_1, \ldots, Z_n)}\\
			&\leq \E_{\eta,\eta'} \exp \pa{\varepsilon^2 \sum_{j,l=1}^{L} \tilde \lambda_j^{-1/2} \tilde \lambda_l^{-1/2} \eta_j\eta'_l  n \int_0^1   \int_0^1 F(x,y) u_j(x) u_l(y) dx dy}.
		\end{align*}
		Now 
		\begin{align*}
			\int_0^1   \int_0^1 F(x,y) u_j(x)  u_l(y) dx dy &= \lambda_j \int_0^1 u_j(x) u_l(x)  dx = \lambda_j \un_{\{j=l\}},
		\end{align*}
		since $u_j$ is an eigenfunction of $K$ and by orthonormality.
		So 
		\begin{align*}
			\mathbb{E}_{Q^n_{f_0}} \cro{  L_{Q^n_{\nu_{\rho}}}^2(Z_1, \ldots, Z_n)} \leq \E_{\eta,\eta'} \exp \pa{n \varepsilon^2 \sum_{j=1}^{L} \tilde \lambda_j^{-1} \eta_j\eta'_j   \lambda_j} \leq \E_{\eta,\eta'} \exp \pa{n \varepsilon^2 \sum_{j=1}^{L}  \eta_j\eta'_j   z_\alpha^2}.
		\end{align*}
		Then 
		\begin{align*}
			\mathbb{E}_{Q^n_{f_0}}\left[ L_{Q^n_{\nu_{\rho}}}^2(Z_1, \ldots, Z_n)\right] \leq \prod_{j=1}^{L} \cosh(n\varepsilon^2 z_\alpha^2) \leq \prod_{j=1}^{L} \exp(n^2 \varepsilon^4 z_\alpha^4) \leq \exp(n^2\varepsilon^4 z_\alpha^4 L). 
		\end{align*}
		Then, in order to apply Lemma~\ref{lem:lowerBoundTV} combined with \ref{lem:TVchi2}, let us find a sufficient condition for
		$$ \mathbb{E}_{Q^n_{f_0}} \cro{  L_{Q^n_{\nu_{\rho}}}^2(Z_1, \ldots, Z_n)} < 1 + 4 (1-\gamma-\beta-\gamma)^2. $$
		So let us choose $\varepsilon$ and $J$ in order to ensure that
		$$ 
		\exp(n^2\varepsilon^4 z_\alpha^4 L) < 1+4(1-2\gamma - \beta)^2,
		$$
		i.e.
		$$
		L \varepsilon^4 \leq (n z_\alpha^2)^{-2} \log\left[1+4(1-2\gamma-\beta)^2 \right],
		$$
		i.e.
		\begin{equation}
			\label{eq:rhoSuff}
			\varepsilon \leq (n z_\alpha^2)^{-1/2} \left(\frac{\log\left[1+4(1-2\gamma-\beta)^2 \right]}{L}\right)^{1/4}.
		\end{equation}

		\subsubsection{Sufficient condition for $f_\eta$ to be non-negative}
		
		\begin{lemma}
			\label{lem:suffCondFDisc}
			If 
			$$
			\varepsilon  \leq \frac{L^{-1}}{\sqrt{2\log(2L/\gamma)}},
			$$
			then there exists $A_\gamma \subset \{-1,1\}^{L}$ such that $\Po_{\nu_\rho}(\eta \in A_\gamma) \geq 1-\gamma$ and for any $\eta \in A_\gamma$, $f_\eta$ is a density.
		\end{lemma}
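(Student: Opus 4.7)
The plan is to reduce the statement to a standard concentration argument for Rademacher sums, using the disjoint-support structure of the Haar system. First I would verify that the total mass is automatic: since $\int \psi_{k} = 0$ for every $k$, we have $\int f_\eta = \int f_0 = 1$ regardless of $\eta$. The only thing at stake is pointwise non-negativity.

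Next I would exploit the disjoint supports of the $\psi_k$. For $x \in [k/L,(k+1)/L)$, only $\psi_k(x)$ is nonzero with $|\psi_k(x)|=\sqrt L$, so
\[
f_\eta(x) \;=\; 1 \;+\; \varepsilon\,\psi_k(x)\,S_k(\eta), \qquad S_k(\eta) := \sum_{j=1}^{L}\eta_j\, a_{j,k}\,\tilde\lambda_j^{-1/2}.
\]
Hence $f_\eta\geq 0$ on $[0,1]$ as soon as $|S_k(\eta)| \leq 1/(\varepsilon\sqrt L)$ for every $k \in \{0,\ldots,L-1\}$. It therefore suffices to show that, under the uniform prior on $\eta \in \{-1,1\}^{L}$, the event
\[
A_\gamma \;=\; \Bigl\{\eta :\ \max_{0\leq k\leq L-1}|S_k(\eta)| \leq (\varepsilon\sqrt L)^{-1}\Bigr\}
\]
has probability at least $1-\gamma$.

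Now I would apply Hoeffding's inequality to each $S_k$, which is a sum of independent $\pm 1$ Rademacher variables weighted by $a_{j,k}\tilde\lambda_j^{-1/2}$. Two orthonormality facts drive the variance bound. On one hand, $(u_j)_{1\leq j\leq L}$ is orthonormal and $(\psi_k)_{0\leq k\leq L-1}$ is orthonormal with disjoint supports, so the matrix $(a_{j,k})$ is orthogonal and $\sum_j a_{j,k}^2 = 1$ for each $k$. On the other hand, $\tilde\lambda_j^{-1}\leq L$ by construction of the $\tilde\lambda_j$. Combining these two facts,
\[
\sum_{j=1}^L a_{j,k}^2 \tilde\lambda_j^{-1} \;\leq\; L,
\]
so Hoeffding yields $\Po(|S_k|>t) \leq 2\exp\bigl(-t^2/(2L)\bigr)$. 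Setting $t = (\varepsilon\sqrt L)^{-1}$ and taking a union bound over the $L$ values of $k$ gives
\[
\Po(A_\gamma^c) \;\leq\; 2L\exp\!\Bigl(-\frac{1}{2\varepsilon^2 L^2}\Bigr),
\]
which is bounded by $\gamma$ precisely under the hypothesis $\varepsilon \leq L^{-1}/\sqrt{2\log(2L/\gamma)}$.

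The main conceptual point, and the only mildly delicate step, is the variance bound: one must notice that the $L\times L$ matrix $(a_{j,k})$ is orthogonal so that the column-sums of squares also equal one, and that this combines with $\tilde\lambda_j^{-1}\leq L$ to yield the clean bound $\sum_j a_{j,k}^2\tilde\lambda_j^{-1}\leq L$. The rest of the argument is a direct application of Hoeffding's inequality and a union bound, and the algebra yields exactly the stated threshold on $\varepsilon$.
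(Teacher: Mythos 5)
Your proposal is correct and follows essentially the same route as the paper's proof: reduction via disjoint supports of the $\psi_k$ to the pointwise condition $\varepsilon\sqrt L\,|S_k(\eta)|\leq 1$, then Hoeffding's inequality with the variance bound $\sum_j a_{j,k}^2\tilde\lambda_j^{-1}\leq L$ (column normalization of the orthogonal matrix plus $\tilde\lambda_j^{-1}\leq L$), and a union bound over $k$. The only cosmetic difference is that you bound the variance before invoking Hoeffding at the fixed threshold $(\varepsilon\sqrt L)^{-1}$, whereas the paper first states the deviation bound with the data-dependent variance and then simplifies; the conclusions coincide.
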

		\begin{proof}
			Let
			$$
			A_\gamma = \{\eta: |\sum_j \eta_j a_{j,k} \tilde \lambda_j^{-1/2}| \leq \sqrt{2 L \log(2 L / \gamma)}\}.
			$$
			Since for all $i$, $u_i$ is orthogonal to $f_0$, uniform density on $[0,1]$,  we have that $ \int_0^1 f_\eta(x) dx =1$ and we just have to prove that $f_\eta$ is nonnegative. 
			We remind the reader that
			$$ u_i= \sum_{k =0}^{L-1} a_{i,k} \psi_{k}.$$
			The bases $(u_1, \ldots, u_{L})$ and $(\psi_{k}, k \in \{0,1,\ldots,L-1\})$ are orthonormal. This implies that the matrix $A=(  a_{i,k})_{1\leq i \leq L, k \in \{0,1,\ldots,L-1\}}$ is orthogonal. So
			$$ \forall i, \sum_{k =0}^{L-1} a_{i,k}^2=1, \ \forall k, \sum_{i =1}^{L} a_{i,k}^2=1.$$
			Hence we have for all $x \in [0,1]$, 
			\begin{align} \label{expansion}
				(f_\eta - f_0)(x) =  \sum_{k =0}^{L-1} \sum_{i =1}^{L}  \eta_i \varepsilon \tilde \lambda_i^{-1/2} a_{i,k}\psi_{k}(x).
			\end{align}
			The functions $ (\psi_{k},k \in \{0,1,\ldots,L-1\})$ have disjoint supports and  $\sup_{x \in [0,1]} | \psi_{k}(x)| = L^{1/2}$. Hence $f_\eta$ is nonnegative if and only if for any $k\in \Lambda(J)$
			\begin{equation} \label{Conddensity}
				L^{1/2} \ab{ \sum_{i =1}^{L}  \eta_i \varepsilon \tilde \lambda_i^{-1/2} a_{i,k}} \leq 1.
			\end{equation}
			By definition of $\nu_\rho$,  we have that
			$f_\eta$ is a density with probability larger than $1-\gamma$ under the prior $\nu_{\rho}$
			as soon as Equation~\eqref{Conddensity}  holds with probability larger than $1-\gamma$.
			That is,
			$$ \mathbb{P}_{\nu_{\rho}}\pa{\forall  k \in \{0,1,\ldots,L-1\}, L^{1/2} \ab{ \sum_{i =1}^{L}  \eta_i \varepsilon \tilde \lambda_i^{-1/2} a_{i,k} } \leq 1} \geq 1-\gamma,$$ where $( \eta_1, \ldots, \eta_L)$ are i.i.d.~Rademacher random variables. Using Hoeffding's inequality, we get for all $x>0$, for all $k \in \{0,1,\ldots,L-1\}$, 
			\begin{align*}
				&\mathbb{P}_{\nu_{\rho}}\pa{\ab{ \sum_{i =1}^{L}  \eta_i \varepsilon \tilde \lambda_i^{-1/2} a_{i,k} } >x}  \leq  2 \exp \pa{ \frac{-2x^2}{\sum_{i =1}^{L}  
						(2 \varepsilon \tilde \lambda_i^{-1/2}a_{ik})^2}}.
			\end{align*}
			Hence
			\begin{align*}
				&\mathbb{P}_{\nu_{\rho}}\pa{ \exists  k \in \{0,1,\ldots,L-1\}, \ab{ \sum_{i = 1}^{L}  \eta_i \varepsilon \tilde \lambda_i^{-1/2} a_{i,k} } >x} \leq  2L \exp \pa{ \frac{-x^2}{2\sum_{i =1}^{L}  
						( \varepsilon \tilde \lambda_i^{-1/2}a_{ik})^2  }}.
			\end{align*}
			So the probability of having the existence of some $k \in \{0,1,\ldots,L-1\}$ such that
			\begin{align*}
				&\ab{ \sum_{i =1}^{L}  \eta_i \varepsilon \tilde  \lambda_i^{-1/2} a_{i,k} }  > \sqrt{ 2 \sum_{i =1}^{L}   (\varepsilon \tilde \lambda_i^{-1/2}a_{ik})^2 \log(2 L/\gamma)}
			\end{align*}
			is smaller than  $\gamma$.
			Hence, $f_\eta$ is a density with probability larger than $1-\gamma$ under the prior $\nu_{\rho}$ as soon as for any $k\in \Lambda(J)$, $$2L\sum_{i =1}^{L}   ( \varepsilon \tilde \lambda_i^{-1/2}a_{ik})^2  \log(2L/\gamma) \leq 1.
			$$
			Now by definition, $\tilde \lambda_i^{-1/2} \leq L^{1/2}.$ So we have the sufficient condition
			$$ 2L \varepsilon^2 L \log(2L/\gamma) \sum_{i = 1}^{L}  a_{ik}^2  \leq 1.
			$$
			And $\sum_{i =1}^{L}  a_{ik}^2 = 1$ leads to the following sufficient condition,
			$$
			\varepsilon  \leq \frac{L^{-1}}{\sqrt{2\log(2L/\gamma)}}.
			$$
		\end{proof}
		
		\subsubsection{Sufficient conditions for $f_\eta\in \mathcal{F}_{\rho}(\mathcal{B}_{s,2,\infty}(R))$, only in the continuous case}
		
		We first prove the following points.
		\begin{lemma} 
			\label{lem:suffCondFNEw}
			If 
			$$
			\varepsilon  \leq \frac{L^{-1} (1 \wedge R L^{-s})}{\sqrt{2\log(2L/\gamma)}},
			$$
			then there exists $A_\gamma \subset \{-1,1\}^{d}$ such that $\Po_{\nu_\rho}(\eta \in A_\gamma) \geq 1-\gamma$ and for any $\eta \in A_\gamma$,
			\begin{enumerate}[a)]
				\item $f_\eta$ is a density.
				\item $ f_\eta \in \mathcal{B}_{s,2,\infty}(R)$.
			\end{enumerate}
		\end{lemma}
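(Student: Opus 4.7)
The plan is to handle parts (a) and (b) separately, reusing the $A_\gamma$ and the argument from Lemma~\ref{lem:suffCondFDisc} for (a), and exploiting the fact that $f_\eta - f_0$ lives in a single wavelet level for (b). Define
$$A_\gamma = \Big\{\eta \in \{-1,1\}^L : \forall k \in \{0,\ldots,L-1\},\ \Big|\sum_{i=1}^L \eta_i \tilde\lambda_i^{-1/2} a_{i,k}\Big| \leq \sqrt{2L \log(2L/\gamma)}\Big\}.$$
The Hoeffding plus union bound computation carried out in the proof of Lemma~\ref{lem:suffCondFDisc}, combined with $\tilde\lambda_i^{-1/2}\leq L^{1/2}$ and the orthogonality relation $\sum_i a_{i,k}^2 = 1$, yields $\mathbb{P}_{\nu_\rho}(A_\gamma) \geq 1-\gamma$. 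For (a), literally the same argument as in Lemma~\ref{lem:suffCondFDisc} then shows that $\varepsilon \leq L^{-1}/\sqrt{2\log(2L/\gamma)}$ suffices for $f_\eta$ to be a density for every $\eta \in A_\gamma$.

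For (b), the key observation is that $\psi_k = \psi_{J,k}$ where $L=2^J$, so by Equation~\eqref{expansion} the difference $f_\eta - f_0$ lies in the linear span of $\{\psi_{J,k}, k \in \Lambda(J)\}$. Since the Haar system $\{\varphi\} \cup \{\psi_{j,k}, j\geq 0, k \in \Lambda(j)\}$ is an orthonormal basis of $\mathbb L_2([0,1])$, orthogonality across levels forces $\beta_{j,k}(f_\eta - f_0) = 0$ for every $j \neq J$ and $k \in \Lambda(j)$. Hence the Besov condition $\sum_{k \in \Lambda(j)} \beta_{j,k}^2(f_\eta - f_0) \leq R^2 2^{-2js}$ is automatic for $j \neq J$ and, at the single scale $j = J$, reduces to controlling
$$\sum_{k=0}^{L-1} \beta_{J,k}^2(f_\eta - f_0) = \varepsilon^2 \sum_{k=0}^{L-1} \Big(\sum_{i=1}^L \eta_i \tilde\lambda_i^{-1/2} a_{i,k}\Big)^2.$$

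For $\eta \in A_\gamma$, each summand is bounded by $2L\log(2L/\gamma)$, so the whole sum is at most $2\varepsilon^2 L^2 \log(2L/\gamma)$. Requiring this to be at most $R^2 L^{-2s}$ gives the sufficient condition $\varepsilon \leq R L^{-(s+1)}/\sqrt{2\log(2L/\gamma)}$. Combining with the density bound from (a) via a minimum yields exactly $\varepsilon \leq L^{-1}(1 \wedge RL^{-s})/\sqrt{2\log(2L/\gamma)}$, as stated. No step looks genuinely hard here: the whole content is the level-$J$ reduction, which removes the apparent multiscale difficulty, and once this is noted the bound is a direct consequence of the same Hoeffding event used in part (a). The only thing to double check is that the Haar wavelet orthogonality across scales really applies (which is immediate since each $\psi_{k}$ has vanishing mean on its support and is constant on the two halves).
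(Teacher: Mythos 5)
Your proof is correct and follows essentially the same route as the paper: the same event $A_\gamma$ with the same Hoeffding/union-bound control, part (a) inherited from the discrete lemma, and part (b) reduced to the single constraint $\varepsilon^2\sum_k\bigl(\sum_i\eta_i\tilde\lambda_i^{-1/2}a_{i,k}\bigr)^2\leq R^2L^{-2s}$, bounded on $A_\gamma$ by $2\varepsilon^2L^2\log(2L/\gamma)$, giving $\varepsilon\leq RL^{-(s+1)}/\sqrt{2\log(2L/\gamma)}$. The only difference is that you spell out the cross-level orthogonality that makes the level-$J$ coefficient bound sufficient (the paper states this reduction as an "if and only if" without comment), which is a harmless and accurate elaboration.
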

		\begin{proof}
			We consider the same event as in the proof of Lemma~\ref{lem:suffCondFDisc}:
			$$
			A_\gamma = \{\eta: |\sum_j \eta_j a_{j,k} \tilde \lambda_j^{-1/2}| \leq \sqrt{2L \log(2L / \gamma)}\}.
			$$

			\begin{enumerate}[a)]
				\item In the same way as in Lemma~\ref{lem:suffCondFDisc}, $f_\eta$ is a density.				
				\item 
				For all $ k \in \{0,1,\ldots,L-1\}$, 
				$$ \langle f_\eta -f_0, \psi_{k} \rangle  = \varepsilon \sum_{i =1}^{L}  \eta_i \tilde \lambda_i^{-1/2} a_{i,k} .$$
				Hence $ f_\eta \in \mathcal{B}_{s,2,\infty}(R)$ if and only if 
				$$  \sum_{k = 0}^{L-1} \varepsilon^2 \pa{ \sum_{i =1}^{L}  \eta_i \tilde \lambda_i^{-1/2} a_{i,k}}^2 \leq R^2 L^{-2s}.$$
				But we also have that for any $\eta \in A_\gamma,$
				$$  \sum_{k = 0}^{L-1} \varepsilon^2 \pa{ \sum_{i =1}^{L}  \eta_i \tilde \lambda_i^{-1/2} a_{i,k}}^2 \leq \varepsilon^2 L^{2} 2 \log(2L / \gamma).$$
				So $ f_\eta \in \mathcal{B}_{s,2,\infty}(R)$ if
				$$
				\varepsilon \leq R L^{-(s+1)}/\sqrt{2 \log(2L / \gamma)}.
				$$
			\end{enumerate}
			
			\subsubsection{Conclusion}
			
			\begin{enumerate}
				\item Discrete case.
				
				So combining Equation~\eqref{eq:rhoSuff} and Lemma~\ref{lem:suffCondFDisc}, we obtain the following sufficient condition in order to apply Lemma~\ref{lem:lowerBoundTV}:
				\begin{align*}
					\varepsilon \leq &\cro{(n z_\alpha^2)^{-1/2} \left(\frac{\log\left[1+4(1-2\gamma-\beta)^2 \right]}{L}\right)^{1/4}} \wedge \frac{L^{-1}}{\sqrt{2\log(2L/\gamma)}}.
				\end{align*}
				So, by Equation~\eqref{eq:pL2Epsilon}, if
				\begin{align*}
					\sqrt{\sum_{k=0}^{d-1} (p_k-p^0_{k})^2} \leq \sqrt{3/4} \Bigg(&\cro{(n z_\alpha^2)^{-1/2} d^{1/4} \left(\log\left[1+4(1-2\gamma-\beta)^2 \right]\right)^{1/4}} \\
					&\wedge \frac{d^{-1/2}}{\sqrt{2\log(2d/\gamma)}} \Bigg),
				\end{align*}
				then we can define densities $f_\eta$ such that the errors are larger than $\gamma$ and $\beta$.
				So
				\begin{align*}
					&\inf_{\Delta_{\gamma,Q}} \rho_n \left(  \Delta_{\gamma,Q}, \mathcal{D}, \beta, f_0 \right)/d^{1/2} \\
					&\geq c\pa{\gamma, \beta} [((n z_\alpha^2)^{-1/2}d^{1/4}) \wedge (d\log d)^{-1/2}].
				\end{align*}
				
				Now, we also have
				\begin{align*}
					&\rho_n^*\left( \mathcal{D}, \alpha,\gamma, \beta, f_0 \right)  \geq \rho_n^*\left( \mathcal{D}, +\infty,\gamma, \beta, f_0 \right),
				\end{align*}
				where $\rho_n^*\left( \mathcal{D}, +\infty,\gamma, \beta, f_0 \right)$ corresponds to the case where there is no local differential privacy condition on $Q$. In particular, taking $Q$ such that $Z=X$ with probability 1 reduces the private problem to the classical testing problem. Now, the data processing inequality in Lemma~\ref{lem:contractionTV} justifies that such a $Q$ is optimal by contraction of the total variation distance.
				And the classical result leads to having 
				$\rho_n^*\left( \mathcal{C}, +\infty,\gamma, \beta, f_0 \right) = c\pa{\gamma, \beta} n^{-1/2} d^{-1/4}.$
				
				So, we have 
				\begin{align*}
					&\rho_n^*\left( \mathcal{D}, \alpha,\gamma, \beta, f_0\right) / d^{1/2} \geq c\pa{\gamma, \beta} [((n z_\alpha^2)^{-1/2}d^{1/4}) \wedge (d\log d)^{-1/2}] \vee (n^{-1/2} d^{-1/4}).
				\end{align*}

				\item Continuous case.
				
				So combining Equation~\eqref{eq:rhoSuff} and Lemma~\ref{lem:suffCondFNEw}, we obtain the following sufficient condition in order to apply Lemma~\ref{lem:lowerBoundTV}:
				\begin{align*}
					\varepsilon \leq &\cro{(n z_\alpha^2)^{-1/2} \left(\frac{\log\left[1+4(1-2\gamma-\beta)^2 \right]}{L}\right)^{1/4}} \wedge \frac{L^{-1} (1 \wedge R L^{-s})}{\sqrt{2\log(2L/\gamma)}}.
				\end{align*}
				So,  by Equation~\eqref{eq:fL2Epsilon}, if
				\begin{align*}
					\|f-f_0\|_2 \leq \sqrt{3/4} \Bigg(&\cro{(n z_\alpha^2)^{-1/2} L^{3/4} \log^{1/4} \left(1+4(1-2\gamma-\beta)^2 \right)} \wedge \frac{(1 \wedge R L^{-s})}{\sqrt{2\log(2L/\gamma)}} \Bigg),
				\end{align*}
				then, taking $J$ as the largest integer such that $2^J \leq c\pa{\gamma, \beta, R}  (nz_\alpha^2)^{2/(4s+3)}$, we obtain:
				\begin{align*}
					&\inf_{\Delta_{\gamma,Q}} \rho_n \left(  \Delta_{\gamma,Q}, \mathcal{B}_{s,2,\infty}(R), \beta, f_0 \right) \geq c\pa{\gamma, \beta,R} (n z_\alpha^2)^{-2s/(4s+3)} (\log n)^{-1/2}.
				\end{align*}
				Now, we also have
				\begin{align*}
					&\rho_n^*\left( \mathcal{B}_{s,2,\infty}(R), \alpha,\gamma, \beta, f_0 \right)  \geq \rho_n^*\left( \mathcal{B}_{s,2,\infty}(R) , +\infty,\gamma, \beta, f_0 \right),
				\end{align*}
				where $\rho_n^*\left( \mathcal{B}_{s,2,\infty}(R) , +\infty,\gamma, \beta, f_0 \right)$ corresponds to the case where there is no local differential privacy condition on $Q$. In particular, taking $Q$ such that $Z=X$ with probability 1 reduces the private problem to the classical testing problem. Now, the data processing inequality in Lemma~\ref{lem:contractionTV} justifies that such a $Q$ is optimal by contraction of the total variation distance.
				And the classical result leads to having 
				$\rho_n^*\left( \mathcal{B}_{s,2,\infty}(R) , +\infty,\gamma, \beta, f_0 \right) = c\pa{\gamma, \beta,R} n^{-2s/(4s+1)}.$
				
				So, we have 
				\begin{align*}
					&\rho_n^*\left( \mathcal{B}_{s,2,\infty}(R) , \alpha,\gamma, \beta, f_0\right)  \geq c\pa{\gamma, \beta ,R} [(n z_\alpha^2)^{-2s/(4s+3)} (\log n)^{-1/2} \vee n^{-2s/(4s+1)}].
				\end{align*}
			\end{enumerate}

		\end{proof}

		\subsection {Proof of the upper bound}\label{sec:proofUB}
		
		In this section, $f_0$ is some fixed density in $\mathbb L_2([0,1])$.

		\subsubsection{ Proof of Theorem \ref{majogene}}
		
		We prove the bound on the variance term $\Var_{Q^n_{f}}\pa{\hat{T}_L}$ given in Equation~\eqref{maj:var}. 
		Let us define
		\begin{align*}
			&\hat{U}_L  = \frac1{n(n-1)}\sum_{ i \neq l = 1}^n  \sum_{k=0}^{L-1} \pa{ Z_{i,L,k} - \alpha_{L,k}} \pa{ Z_{l,L,k} - \alpha_{L,k}},
		\end{align*}
		$$ \hat{V}_L =  2   \sum_{k=0}^{L-1} \pa{ \alpha_{L,k} -  \alpha_{L,k}^0} \frac1n \sum_{i=1}^n ({ Z_{i,L,k}} - \alpha_{L,k}),$$
		where $ \alpha_{L,k} = \int_0^1 \varphi_{L,k}(x) f(x) dx$ and $ \alpha^0_{L,k} = \int_0^1 \varphi_{L,k}(x) f_0(x) dx$.
		Then we obtain the Hoeffding's decomposition of the U-statistic $\hat{T}_L$, namely
		$$ \hat{T}_L =  \hat{U}_L +  \hat{V}_L  + \| \PJ(f-f_0)\|_2^2.$$
		We first control the variance of the degenerate U-statistic $\hat{U}_L$ which can be written as 
		$$ \hat{U}_L  =  \frac1{n(n-1)}\sum_{ i \neq l = 1}^n  h_L(Z_{i,L}, Z_{l,L}), $$
		where 
		$$   h_L(Z_{i,L}, Z_{l,L}) = \sum_{k =0}^{L-1} \pa{ Z_{i,L,k} - \alpha_{L,k}} \pa{ Z_{l,L,k} - \alpha_{L,k}}.$$
		In order to provide an upper bound for the variance $\Var_{Q^n_{f}} (\hat{U}_L )$, let us first state a  lemma controlling the variance of a $U$-statistic of order $2$. This result is a particular case of  Lemma 8 in \cite{HSICtest}. 
		
		\begin{lemma}	\label{lem:VarU}
			Let $h$ be a symmetric function with $2 $ inputs, $Z_1, \ldots ,Z_n$ be independent and identically distributed random vectors and $U_n$ be the $U$-statistic of order $2$ defined by 
			\begin{equation*}
				U_n = \frac{1}{n(n-1)} \sum_{i \neq l =1}^n h(Z_i,Z_l).
			\end{equation*}
			The following inequality gives an upper bound on the variance of $U_n$, 
			\begin{equation*}
				\Var(U_n) \leq C \left( \frac{\sigma^2}{n} +  \frac{s^2}{n^2} \right),
			\end{equation*}
			where $\sigma^2 = \Var \left( \E[h (Z_1,Z_2) \mid Z_1] \right)$ and $s^2 = \Var \left( h (Z_1, Z_2) \right)$.  
			\label{Var_Ustat}
		\end{lemma}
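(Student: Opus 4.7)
The plan is to rely on Hoeffding's decomposition for U-statistics of order~$2$, which cleanly separates $U_n$ into a linear term of variance order $1/n$ and a degenerate quadratic term of variance order $1/n^2$. First I would set $\theta = \E[h(Z_1,Z_2)]$, define the linear part $h_1(z) = \E[h(z,Z_2)] - \theta$, and the degenerate part $h_2(z_1,z_2) = h(z_1,z_2) - h_1(z_1) - h_1(z_2) - \theta$. By construction $\E[h_2(Z_1,z)]=0$ for every fixed $z$, so $h_2$ is a canonical (degenerate) kernel, and one has the identity
\begin{equation*}
U_n - \theta \;=\; \frac{2}{n}\sum_{i=1}^n h_1(Z_i) \;+\; \frac{1}{n(n-1)} \sum_{i \neq l} h_2(Z_i, Z_l).
\end{equation*}

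Next, I would compute the variance by expanding the square and using the i.i.d.\ structure. The cross-term vanishes: for $i \neq l$, $\E[h_1(Z_j) h_2(Z_i,Z_l)] = 0$ by degeneracy of $h_2$, regardless of whether $j \in \{i,l\}$ or not. So $\Var(U_n)$ splits into the variance of the linear term, which equals $(4/n)\,\Var(h_1(Z_1)) = 4\sigma^2/n$, plus the variance of the degenerate quadratic term. For the latter, I would expand
\begin{equation*}
\E\Bigl[\Bigl(\sum_{i \neq l} h_2(Z_i,Z_l)\Bigr)^2\Bigr] = \sum_{i \neq l,\; i' \neq l'} \E[h_2(Z_i,Z_l)\,h_2(Z_{i'},Z_{l'})],
\end{equation*}
and note that by degeneracy, whenever the multiset $\{i,l\}$ and $\{i',l'\}$ share at most one index, the expectation factorises through an expression of the form $\E[h_2(Z_1,z)]=0$. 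Hence only pairs with $\{i,l\}=\{i',l'\}$ survive, giving at most $2n(n-1)$ nonzero terms, each bounded by $\E[h_2(Z_1,Z_2)^2]$.

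To conclude, I would bound $\E[h_2(Z_1,Z_2)^2] \le \Var(h_2(Z_1,Z_2)) \le \Var(h(Z_1,Z_2)) = s^2$; the last inequality holds because $h_2$ is the orthogonal projection of $h - \theta$ onto the subspace of mean-zero functions depending nontrivially on both arguments, hence has smaller $L^2$-norm than $h - \theta$. Combining everything gives the announced bound
\begin{equation*}
\Var(U_n) \;\le\; \frac{4\sigma^2}{n} + \frac{2 s^2}{n(n-1)} \;\le\; C\left(\frac{\sigma^2}{n} + \frac{s^2}{n^2}\right).
\end{equation*}
There is no real obstacle here: the only slightly delicate point is the careful bookkeeping of which index patterns $(i,l,i',l')$ survive after using the degeneracy of $h_2$, and the comparison of $\Var(h_2)$ with $s^2 = \Var(h)$, both of which are standard.
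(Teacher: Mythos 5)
Your proof is correct: the Hoeffding decomposition, the vanishing of the cross terms and of all off-diagonal terms of the degenerate part by canonicity, the count of $2n(n-1)$ surviving terms, and the orthogonality bound $\E[h_2^2]\le \Var(h)=s^2$ all check out, giving $\Var(U_n)\le 4\sigma^2/n + 2s^2/(n(n-1))$. The paper does not prove this lemma itself but simply cites it (Lemma 8 of the HSIC reference, with Hoeffding's decomposition as in Serfling's Lemma A invoked in the sketch of Theorem~\ref{majogene}), and that cited argument is exactly the decomposition you carried out, so your proposal is a correct self-contained version of the same approach.
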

		We have that  $ \E_{Q^n_{f}}[h_L (Z_1,Z_2) \mid Z_1]  =0$, hence the  first term in the upper bound of the variance vanishes. In order to bound the term $s^2$, we write
		\begin{align*}
			h_L (Z_1,Z_2) &= \sum_{k =0}^{L-1}  \pa{  \varphi_{L,k}(X_1) - \alpha_{L,k}}\pa{  \varphi_{L,k}(X_2) - \alpha_{L,k}}  + \sigma_L^2 \sum_{k =0}^{L-1}  W_{1,L,k}W_{2,L,k}\\
			&\quad + \sigma_L  \sum_{k =0}^{L-1} W_{1,L,k} \pa{  \varphi_{L,k}(X_2) - \alpha_{L,k}}  +  \sigma_L  \sum_{k =0}^{L-1}  W_{2,L,k} \pa{  \varphi_{L,k}(X_1) - \alpha_{L,k}}.
		\end{align*}
		So, since $\E_{Q^n_{f}} \pa{  \varphi_{L,k}(X_i) - \alpha_{L,k}} = 0$ and $\E(W_{i,L,k} )= 0$ for any $i$. Using independence properties, we therefore have
		\begin{align*}
			&\Var_{Q^n_{f}} \left( h_L (Z_1, Z_2) \right) \\
			& =  \Var_{Q^n_{f}} \Bigg[   \sum_{k =0}^{L-1}  \pa{  \varphi_{L,k}(X_1) - \alpha_{L,k}} \pa{  \varphi_{L,k}(X_2) - \alpha_{L,k}} \Bigg] + \Var_{Q^n_{f}} \cro{ \sigma_L^2  \sum_{k =0}^{L-1} W_{1,L,k}W_{2,L,k}}\\
			&\quad+  2 \Var_{Q^n_{f}} \cro{ \sigma_L  \sum_{k =0}^{L-1} W_{1,L,k} \pa{  \varphi_{L,k}(X_2) - \alpha_{L,k}}}.
		\end{align*}
		Now, by independence of $X_1$ and $X_2$,
		\begin{align*}
			&\Var_{Q^n_{f}} \cro{  \sum_{k =0}^{L-1}  \pa{  \varphi_{L,k}(X_1) - \alpha_{L,k}}\pa{  \varphi_{L,k}(X_2) - \alpha_{L,k}} } \\
			&=   \sum_{k,k' =0}^{L-1}  \E\cro{\pa{  \varphi_{L,k}(X_1) - \alpha_{L,k}} \pa{  \varphi_{L,k'}(X_1) - \alpha_{L,k'}} } \\
			&\quad\quad\quad\quad\quad\quad \E\cro{\pa{  \varphi_{L,k}(X_2) - \alpha_{L,k}} \pa{  \varphi_{L,k'}(X_2) - \alpha_{L,k'}} } \\
			&=   \sum_{k ,k'=0}^{L-1} \cro{ \int \varphi_{L,k} \varphi_{L,k'} f -  \alpha_{L,k}\alpha_{L,k'}}^2.
		\end{align*}
		So
		\begin{align*}
			&\Var_{Q^n_{f}} \cro{  \sum_{k =0}^{L-1}  \pa{  \varphi_{L,k}(X_1) - \alpha_{L,k}}\pa{  \varphi_{L,k}(X_2) - \alpha_{L,k}} } \\
			&= \int \int \pa{  \sum_{k =0}^{L-1}\varphi_{L,k} (x) \varphi_{L,k} (y)}^2 f(x) f(y) dx dy   -2 \int \pa{    \sum_{k =0}^{L-1} \alpha_{L,k} \varphi_{L,k} (x)}^2  f(x) dx \\
			&\quad\quad+ \pa{  \sum_{k =0}^{L-1} \alpha_{L,k}^2 }^2.
		\end{align*}
		In order to control the first term of the variance, note that by definition of the functions $\varphi_{L,k}$, we have that, for all $x \in [0,1]$,  $\varphi_{L,k}(x) \varphi_{L,k'}(x)=0$ if $k \neq k'$, and that $\varphi_{L,k}^2 = \sqrt L \varphi_{L,k}$.  Hence, 
		\begin{align*}
			\int \int \pa{  \sum_{k =0}^{L-1}\varphi_{L,k} (x) \varphi_{L,k} (y)}^2 f(x) f(y) dx dy &= L  \sum_{k =0}^{L-1} \alpha_{L,k}^2 \\
			& \leq L \|f\|_2^2.
		\end{align*}
		Since the second term of the variance is nonpositive,  and the third term is controlled by $ \|f\|_2^4$,
		we obtain
		$$\Var_{Q^n_{f}} \cro{  \sum_{k =0}^{L-1}  \pa{  \varphi_{L,k}(X_1) - \alpha_{L,k}}\pa{  \varphi_{L,k}(X_2) - \alpha_{L,k}} } 
		\leq  L \|f\|_2^2 + \|f\|_2^4 \leq 2 L \|f\|_2^2. $$
		By independence of the variables $ (W_{i,L,k})$, 
		\begin{align*}
			&\Var \pa{ \sigma_L^2  \sum_{k =0}^{L-1} W_{1,L,k}W_{2,L,k}} = \sigma_L^4  \sum_{k =0}^{L-1}\Var(  W_{1,L,k}W_{2,L,k})  = L \sigma_L^4.
		\end{align*}
		Finally, using again the  independence of the variables $ (W_{1,L,k})_{k \in \ac{0, \ldots, L-1}}$,  and their independence with $X_2$, 
		\begin{align*}
			&\Var_{Q^n_{f}} \cro{ \sigma_L  \sum_{k =0}^{L-1}  W_{1,L,k} \pa{  \varphi_{L,k}(X_2) - \alpha_{L,k}}} \\
			&= 
			\sigma_L^2 \E_{Q^n_{f}}  \cro{ \sum_{k,k' =0}^{L-1}W_{1,L,k} W_{1,L,k'} 
				\pa{  \varphi_{L,k}(X_2) - \alpha_{L,k}}  \pa{  \varphi_{L,k'}(X_2) - \alpha_{L,k'}} }\\
			&= \sigma_L^2  \sum_{k =0}^{L-1}  \E(W_{1,L,k}^2)  \E_{Q^n_{f}}\cro{\pa{  \varphi_{L,k}(X_2) - \alpha_{L,k}}^2} \\
			&\leq     \sigma_L^2 \sum_{k =0}^{L-1} \int   \varphi_{L,k}^2 f \leq	  \sigma_L^2 L \sum_{k =0}^{L-1} \int_{k/L}^{(k+1)/L} f  \leq	  \sigma_L^2 L
		\end{align*}
		since $ \int_0^1 f =1$.
		This leads to the following upper bound for  $  \Var_{Q^n_{f}} \left( h_L (Z_1, Z_2) \right)$, 
		$$  \Var_{Q^n_{f}} \left( h_L (Z_1, Z_2) \right) \leq (2\|f\|_2^2 + \sigma_L^2 +  \sigma_L^4 ) L , $$
		from which, by application of Lemma~\ref{lem:VarU}, we deduce that
		$$  \Var_{Q^n_{f}} \pa{ \hat{U}_L}  \leq 2 \frac{(\|f\|_2^2 + \sigma_L^4 ) L }{n^2}.$$
		Let us now compute $ \Var_{Q^n_{f}} (\hat{V}_L)$. Since $\hat{V}_L$ is centered,
		\begin{align*}
			&\Var_{Q^n_{f}}\pa{\hat{V}_L } = \E_{Q^n_{f}}(\hat{V}_L^2)\\
			&\quad\quad= \frac4{n^2} \sum_{k,k' =0}^{L-1} \pa{ \alpha_{L,k} -  \alpha_{L,k}^0}  \pa{ \alpha_{L,k'} -  \alpha_{L,k'}^0}    \sum_{i,l=1}^n  \E_{Q^n_{f}} \cro{ ({ Z_{i,L,k}} - \alpha_{L,k}) ({ Z_{l,L,k'}} - \alpha_{L,k'})}.
		\end{align*}
		Note that, if $i \neq l$, 
		$$ \E_{Q^n_{f}} \cro{ ({ Z_{i,L,k}} - \alpha_{L,k}) ({ Z_{l,L,k'}} - \alpha_{L,k'})} =0.$$
		Moreover,
		\begin{align*}
			&\E_{Q^n_{f}} \cro{ ({ Z_{i,L,k}} - \alpha_{L,k}) ({ Z_{i,L,k'}} - \alpha_{L,k'})}\\
			& \quad\quad\quad\quad = \E \big[ ({ \varphi_{L,k}}(X_i) - \alpha_{L,k}) 
			({  \varphi_{L,k'}}(X_i)- \alpha_{L,k'})  + \sigma_L^2\E_{Q^n_{f}}( W_{i,L,k} W_{i,L,k'}) \big] \\
			& \quad\quad\quad\quad = \int  \varphi_{L,k}  \varphi_{L,k'} f -  \alpha_{L,k}  \alpha_{L,k'} + 2  \sigma_L^2 \un_{k=k'}.
		\end{align*}
		Hence,
		\begin{align*}
			&\Var_{Q^n_{f}}\pa{\hat{V}_L} \\
			&=  \frac4{n} \sum_{k,k' =0}^{L-1}  \pa{ \alpha_{L,k} -  \alpha_{L,k}^0}  \pa{ \alpha_{L,k'} -  \alpha_{L,k'}^0}  \pa{
				\int \varphi_{L,k}  \varphi_{L,k'} f -  \alpha_{L,k}  \alpha_{L,k'} + 2  \sigma_L^2 \un_{k=k'}} \\
			&=  \frac4{n} \int \pa{\sum_{k =0}^{L-1}(  \alpha_{L,k} -  \alpha_{L,k}^0)  \varphi_{L,k}}^2 f  -  \frac4{n} \pa{\sum_{k =0}^{L-1} \alpha_{L,k}( \alpha_{L,k} -  \alpha_{L,k}^0)}^2 \\
			&\quad\quad + \frac8{n}\sigma_L^2 \sum_{k =0}^{L-1} (  \alpha_{L,k} -  \alpha_{L,k}^0)^2\\
			&\leq \frac4{n} \sum_{k =0}^{L-1} ( \alpha_{L,k} -  \alpha_{L,k}^0)^2  \int \varphi_{L,k}^2 f +  \frac8{n}\sigma_L^2 \sum_{k =0}^{L-1}(  \alpha_{L,k} -  \alpha_{L,k}^0)^2\\
			&\leq \frac1n\pa{4 \sqrt{L} \|f \|_2  +  8\sigma_L^2 }   \sum_{k =0}^{L-1} ( \alpha_{L,k} -  \alpha_{L,k}^0)^2 
		\end{align*}
		since by Cauchy Schwarz inequality,
		$$  0 \leq \int \varphi_{L,k}^2 f  = \sqrt{L}  \int \varphi_{L,k} f \leq  \sqrt{L}  \| \varphi_{L,k} \|_2  \| f\|_2  =  \sqrt{L}    \| f\|_2  .$$
		We finally obtain, 
		$$ \Var_{Q^n_{f}}\pa{\hat{V}_L } \leq C\frac{(\sqrt L \|f \|_2+ \sigma_L^2)}{n}  \| \PJ(f-f_0)\|_2^2. $$
		Collecting the upper bounds for  $\Var_{Q^n_{f}}\pa{\hat{U}_L } $ and for $\Var_{Q^n_{f}}\pa{\hat{V}_L } $, we obtain the inequality from Equation~\eqref{maj:var}, that we remind here:
		\begin{align*}
			\Var_{Q^n_{f}}\pa{\hat{T}_L} \leq C \cro{\frac{(\sqrt L \|f \|_2+ \sigma_L^2)}{n}  \| \PJ(f-f_0)\|_2^2 + \frac{(\|f\|_2^2 + \sigma_L^4 ) L }{n^2}}.
		\end{align*}
		%

		\subsubsection{ Proof of  Corollary \ref{cor:uppergene}}
		From Equation~\eqref{maj:var} and taking $f=f_0$, we obtain
		\begin{align*}
			\sqrt{\Var_{Q^n_{f_0}}  \pa{\hat{T}_L}/\gamma } \leq C(\gamma) \frac{(\|f_0\|_2 + \sigma_L^2 ) \sqrt L}{n}.
		\end{align*}
		Moreover, we deduce from \eqref{maj:var} that 
		$$\sqrt{\Var_{Q^n_{f}}  \pa{\hat{T}_L}/\beta }\leq C(\beta) \cro{ \frac{(L^{1/4} \|f \|_2^{1/2}+\sigma_L)}{\sqrt{n}} \| \PJ(f-f_0)  \|_2  + \frac{(\|f \|_2+\sigma_L^2 ) \sqrt{L}}{n}}.$$
		Using the inequality between geometric and harmonic means, we get
		$$ \sqrt{\Var_{Q^n_{f}}  \pa{\hat{T}_L}/\beta }\leq   \frac12  \| \PJ(f-f_0)\|_2^2 +  C(\beta)  \frac{(\|f \|_2+\sigma_L^2 ) \sqrt{L}}{n}.$$
		%
		We conclude the proof by using the condition in Equation~\eqref{cond:majogene}.
		
		\subsubsection{Proof of  Theorem \ref{bornesupdiscret}}
		
		We recall that we have defined 
		$$ f =  d \sum_{k =0}^{d-1} p_k \un_{[k/d, (k+1)/d)}, \  f_0 =  d \sum_{k =0}^{d-1} p^0_{k} \un_{[k/d, (k+1)/d)} . $$
		We obtain from  Corollary \ref{cor:uppergene} that the second kind error of the test is controlled by $\beta$ if 
		\begin{align*}
			\frac{3}{2}\| \PJ(f-f_0)\|_2^2 &\geq  C(\gamma,\beta) \frac{(\|f \|_2 + \|f_0 \|_2 + \sigma_L^2) \sqrt{L}}{n}.
		\end{align*}
		In the discrete case, by definition, $f$ and $f_0$ belong to $S_L$, hence $\| \PJ(f-f_0)\|_2 = \| f-f_0\|_2$ and $\|f\|_2 \leq \|f_0\|_2 + \| f-f_0\|_2$. So we have the following sufficient condition.
		$$
		\| f-f_0\|_2^2 \geq C(\gamma,\beta) \frac{(\|f_0\|_2 + \sigma_L^2 + \sqrt L / n)\sqrt L}{n}
		$$
		Moreover, we have
		$$ \| f-f_0\|_2^2=d   \sum_{k =0}^{d-1} (p_k-p^0_{k})^2.$$
		We recall that $L=d$ and $\sigma_L = 2\sqrt{2d}/\alpha$. That is, the sufficient condition turns out to be
		$$d   \sum_{k =0}^{d-1} (p_k-p^0_{k})^2  \geq C(\gamma,\beta) \frac{d^{1/2}}{n} \pa{\|f_0 \|_2 + d\alpha^{-2} + d^{1/2}n^{-1}}.
		$$
		By definition of $f_0$, we have that
		$$ \|f\|_2^2  = d\sum_{k=0 }^{d-1}  (p_{0,k})^2.$$
		Finally,  we obtain the following condition
		$$
		\sqrt{\sum_{i=0}^{d-1} (p_i-p^0_{i})^2}   \geq C(\gamma,\beta) \frac{d^{-1/4}}{n^{1/2}} \pa{\cro{d \sum_{k=0}^{d-1} (p^0_{k})^2}^{1/4} + d^{1/2}\alpha^{-1} + d^{1/4}n^{-1/2}}.
		$$

		\subsubsection{Proof of  Theorem \ref{bornesup}}
		
		We obtain from  Corollary \ref{cor:uppergene} that the second kind error of the test is controlled by $\beta$ if 
		\begin{align*}
			\| f-f_0\|_2^2 &\geq  \|  f-f_0- \PJ(f-f_0)\|_2^2  +  C(\|f_0\|_2, \|f\|_2, \gamma,\beta) \frac{(\sigma_L^2+1) \sqrt{L}}{n}.
		\end{align*}
		Since $f-f_0 \in \mathcal{B}_{s,2, \infty}(R)$, setting $L=2^J$, we have,  on one hand 
		$$  \|  f-f_0- \PJ(f-f_0)\|_2^2 \leq R^2 2^{-2Js},$$
		and on the other hand,
		$  \|  f \|_2  \leq C(s, R, \|  f_0 \|_2)$. 
		This  leads to the sufficient condition 
		\begin{align*}
			\| f-f_0\|_2^2 &\geq  R^2 2^{-2Js} +  C(s,R, \|f_0\|_2,\gamma,\beta) \frac{(\sigma_L^2+1) 2^{J/2}}{n} .
		\end{align*}
		We recall that $\sigma_L = 2 {\sqrt{2L}}/{\alpha}$. That is, the sufficient condition turns out to be:
		\begin{align}\label{condpuiss}
			&\| f-f_0\|_2^2  \geq  C(s,R, \|f_0\|_2,\gamma,\beta)  \pa{2^{-2Js} + \frac{2^{3J/2}}{\alpha^2 n} +  \frac{2^{J/2}}{ n}}.
		\end{align}
		$J^*$ being set as the smallest integer $J$ such that $2^J \geq (n \alpha^2)^{2/(4s+3)}\wedge n^{2/(4s+1)}$, we consider two cases. 
		\begin{itemize}
			\item If $1/\sqrt n \leq \alpha \leq n^{1/(4s+1)}$, then $ (n \alpha^2)^{2/(4s+3)} \leq n^{2/(4s+1)}$ and the right-hand side of the inequality in Equation~\eqref{condpuiss} for $J= J^*$ is upper bounded  by $$   C(s,R, \|f_0\|_2,\gamma,\beta)  (n \alpha^2)^{-4s/(4s+3)}. $$
			\item  If $ \alpha > n^{1/(4s+1)}$, then $ (n \alpha^2)^{2/(4s+3)} > n^{2/(4s+1)}$ and the right-hand side of the inequality in Equation~\eqref{condpuiss} for $J= J^*$ is upper bounded  by $$   C(s,R, \|f_0\|_2,\gamma,\beta) n^{-4s/(4s+1)}.$$ 
		\end{itemize}
		Hence, the separation rate of our test 
		over the set $  \mathcal{B}_{s,2, \infty}(R)$ is controlled by
		$$ C(s,R, \|f_0\|_2,\gamma,\beta)  \cro{({n} \alpha^2)^{-2s/(4s+3)}  \vee n^{-2s/(4s+1)}}, $$
		which concludes the proof of Theorem \ref{bornesup}.

		\subsection{Adaptivity: proof of Theorem \ref{bornesupadapt}}
		\label{sec:proofAdapt}
		
		In this section, $f_0$ is some fixed density in $\mathbb L_2([0,1])$.
		
		Using the inequality from Equation~\eqref{bornetestagreg},  and the fact that $u_\gamma \geq \gamma/ |\J |$, we obtain that 
		\begin{equation} \label{testagregpuis}
			\mathbb{P}_{Q^n_f}  \pa{ \Delta_{\gamma,Q}^{\J}  =0 } \leq \beta
		\end{equation}
		as soon as 
		$$ \exists J \in \J, \mathbb{P}_{Q^n_{f}} \pa{\tilde {T}_J  \leq \tilde t^{0}_J(1-u_\gamma)} \leq \beta .$$
		We use the result of Corollary~\ref{cor:uppergene}, for $L=2^J$ for some $J \in \J$, where $\sigma_L$ is replaced by $\tilde \sigma_{2^J}$ and $\gamma$ is replaced by $ \gamma/ |\J|$. 
		
		Using the fact that $ |\J| \leq C \log(n)$,  we get that Equation~\eqref{testagregpuis} holds as soon as there exists $J \in \J$ such that
		$$  \| \Pi_{S_{2^J}}(f-f_0)\|^2 \geq   C(\|f_0\|_2, \|f\|_2,\beta)\pa{ \frac{(\tilde \sigma_{2^J}^2 +1) 2^{J/2} }{n \sqrt{\gamma/  |\J|} }},   $$
		or equivalently
		\begin{align*}
			&\| f-f_0\|^2 \geq  \inf_{J \in \J } \Bigg[ \|  f-f_0-  \Pi_{S_{2^J}}  (f-f_0)\|^2 +  C(\|f_0\|_2, \|f\|_2,\gamma,\beta) \frac{(\tilde \sigma_J^2+1) 2^{J/2} \sqrt{\log(n)} }{n} \Bigg] .
		\end{align*}
		Assuming that $f \in \mathcal{B}_{s,2, \infty}(R)$, for some $s>0$ and $R>0$,   we get  that Equation~\eqref{testagregpuis} holds if
		$$
		\| f-f_0\|^2 \geq  \inf_{J \in \J } \Bigg[ R^2 2^{-2Js} +  C(\|f_0\|_2,R,\gamma,\beta)\pa{ 2^{J/2} + \frac{2^{3J/2}\log^2(n)}{\alpha^2}} \frac{\sqrt{\log(n)}}{n}\Bigg] .$$
		Choosing $J \in \J $ as the smallest  integer in $\J$  such that  
		$$
		2^J \geq (n^2 \alpha^4/\log^5(n))^{1/(4s+3)}\wedge (n^2 /\log(n))^{1/(4s+1)},
		$$
		 we obtain the sufficient condition
		$$\| f-f_0\|^2 \geq   C( \|f_0\|_2, R,\gamma,\beta)  \cro{  (n \alpha^2/\log^{5/2}(n))^{-4s/(4s+3)} \vee  (n / \sqrt{\log(n)})^{-4s/(4s+1)}}.$$
		Hence, for all $s>0$, $R >0$, the separation rate of the aggregated test 
		over the set $  \mathcal{B}_{s,2, \infty}(R)$ is controlled by
		$$ C(\|f_0\|_2, R, \gamma,\beta) \Big[ ({n} \alpha^2/\log^{5/2}(n))^{-2s/(4s+3)}   \vee  (n /\sqrt{\log(n)})^{-2s/(4s+1)} \Big], $$
		which concludes the proof of Theorem \ref{bornesupadapt}.

	\end{appendix}
	
	\section*{Acknowledgements}
	B. Laurent and J-M. Loubes recognize the funding by ANITI ANR-19-PI3A-0004.
	
	
	
	\bibliographystyle{imsart-number} 
	\bibliography{main}       
	
	
\end{document}